\theoremstyle{plain}
\newtheorem{theorem}{Theorem}
\newtheorem{proposition}{Proposition}
\newtheorem{lemma}{Lemma}
\newtheorem{conjecture}{Conjecture}
\theoremstyle{remark}
\newtheorem{remark}{Remark}
\def \b {{\beta}}
\def \H {{\mathbb H}}
\def \psl  {{\hbox{PSL}_2( {\mathbb R})} }
\def \pslz  {{\hbox{PSL}_2( {\mathbb Z})} }
\def \di {{d^{(0)}_{t}(f)}}
\def \dii {{d^{(1)}_{t}(f)}}
\title{Refined Counting of Geodesic Segments in the Hyperbolic Plane}
\author{Marios Voskou}
\address{Max Planck Institute for Mathematics, Vivatsgasse 7, 53111 Bonn, Germany}
\email{voskou@mpim-bonn.mpg.de}
\date{\today}
\subjclass[2020]{Primary 11F72; Secondary 11N45, 11N36, 11N75}
\begin{document}
\begin{abstract} For $\Gamma$ a cofinite Fuchsian group, and $l$ a fixed closed geodesic, we study the asymptotics of the number of those images of $l$ that have a prescribed orientation and distance from $l$ less than or equal to $X$. Using a new relative trace formula that we develop, we give a new concrete proof of the error bound $O(X^{2/3})$ that appears in the works of Good and Hejhal. Furthermore, we prove a  new bound $O(X^{1/2}\log{X})$ for the mean square of the error. For particular arithmetic groups, we provide interpretations in terms of correlation sums of the number of ideals of norm at most $X$ in associated number fields, generalizing previous examples due to Hejhal.
\end{abstract}
\maketitle
\section{Introduction}

For a fixed cofinite Fuchsian group $\Gamma \subset \psl$ and points $z,w \in \mathbb{H}$, where $\mathbb{H}$ is the hyperbolic upper half-plane, the associated hyperbolic lattice counting problem is concerned with estimating the number $$N(X,z,w):= \# \left\{ \gamma \in \Gamma \vert  2\cosh{\rho(\gamma z, w) } \leq X \right\}.$$
Here, $\rho(z,w) $ is the hyperbolic distance function, induced from the Poincar\'e metric $ds^2=y^{-2}\left( dx^2+dy^2 \right)$. 

 Using the spectral theory of automorphic kernels, we can derive the following result \cite[Thm 12.1]{iwaniec}.
\begin{theorem}[Selberg \cite{selberg}, G\"{u}nther \cite{gunther}, Good \cite{good}]  \label{classiccounting}
Let $\Gamma$ be a cofinite Fuchsian group, and $z,w \in \H$. Then,
$$N\left(X,z,w\right)= \sum_{1/2<s_j \leq 1} \pi^{1/2}\frac{\Gamma \left(s_j-\frac{1}{2} \right)}{\Gamma \left(s_j+1 \right)} u_{0,j}(z) \overline{u_{0,j}(w)}X^{s_j}+ O\left(X^{2/3} \right).$$
\end{theorem}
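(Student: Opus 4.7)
The plan is to use the spectral theory of automorphic kernels via the Selberg pre-trace formula. Let $u(z,w) = \frac{|z-w|^2}{4 \operatorname{Im}(z) \operatorname{Im}(w)}$, so that $\cosh\rho(z,w) = 1 + 2u$, and set $R = (X-2)/4$; then $N(X,z,w) = \#\{\gamma \in \Gamma : u(\gamma z, w) \le R\}$. Since the indicator $\mathbf{1}_{[0,R]}$ is not smooth, I would introduce a smoothing parameter $Y > 0$ (to be optimized) and construct radial test functions $k_Y^{\pm}:[0,\infty)\to\R$ bracketing it, $k_Y^- \le \mathbf{1}_{[0,R]} \le k_Y^+$, each coinciding with $\mathbf{1}_{[0,R]}$ outside a hyperbolic window of width $Y$ (in the $\rho$-variable) around $\rho_R = \operatorname{arccosh}(X/2)$. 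The automorphic kernels $K_Y^\pm(z,w) = \sum_{\gamma\in\Gamma} k_Y^\pm(u(\gamma z, w))$ then sandwich $N(X,z,w)$.

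Next I would apply the pre-trace formula
\begin{equation*}
K_Y^\pm(z,w) = \sum_j h_Y^\pm(t_j)\, u_{0,j}(z)\overline{u_{0,j}(w)} + \frac{1}{4\pi}\sum_{\mathfrak{a}}\int_{-\infty}^{\infty} h_Y^\pm(t)\, E_\mathfrak{a}(z,\tfrac{1}{2}+it)\overline{E_\mathfrak{a}(w,\tfrac{1}{2}+it)}\,dt,
\end{equation*}
where $h_Y^\pm$ is the Selberg/Harish-Chandra transform of $k_Y^\pm$. For the finitely many small eigenvalues $s_j \in (1/2, 1]$ (corresponding to purely imaginary $t_j = i(s_j - 1/2)$), an explicit evaluation of the Selberg transform of the sharp cutoff $\mathbf{1}_{[0,R]}$ yields the leading contribution $\pi^{1/2}\Gamma(s_j-1/2)/\Gamma(s_j+1)\, X^{s_j}$, while the smoothing introduces an additional error of size $O(Y X^{s_j})$; this produces the main terms of the theorem.

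To control the error I would bound the tempered cuspidal contribution (real $t_j$) and the Eisenstein integral uniformly in $Y$. The two key inputs are (i) an oscillatory-integral estimate $|h_Y^\pm(t)| \ll X^{1/2}(1+|t|)^{-3/2}$ for $|t| \le Y^{-1}$, together with rapid decay in $|t|$ beyond that threshold coming from the smoothness of $k_Y^\pm$, and (ii) the local Weyl law $\sum_{|t_j|\le T}|u_{0,j}(z)|^2 \ll T^2 + 1$ together with its Eisenstein analogue. Dyadic summation over $1 \le T \le Y^{-1}$ then gives a spectral error of size $O(X^{1/2} Y^{-1/2})$. The geometric smoothing error is the number of lattice points in a hyperbolic shell of width $Y$ at $\rho_R$, which by the hyperbolic area estimate is $O(XY)$.

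Balancing $XY \asymp X^{1/2} Y^{-1/2}$ forces $Y = X^{-1/3}$ and yields a total error of $O(X^{2/3})$. The principal obstacle is the uniform control of $h_Y^\pm(t)$ across the transition region $|t| \sim Y^{-1}$: obtaining the sharp oscillatory-integral bound for the Selberg transform of a barely-smooth cutoff requires a careful saddle-point analysis (e.g.\ via the Mehler-Fock representation), and the Eisenstein contribution needs the standard mean-square bound on Eisenstein series.
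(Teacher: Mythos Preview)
The paper does not supply its own proof of this theorem; it is quoted as a classical result of Selberg, G\"unther, and Good, with a pointer to \cite[Thm~12.1]{iwaniec} for the argument (see also the remark in Section~\ref{setupsection}). Your outline is precisely that standard proof: smooth the ball indicator, apply the pre-trace formula, estimate the Selberg transform by oscillatory-integral/endpoint analysis, feed in the local Weyl law, and optimize the smoothing parameter to $Y=X^{-1/3}$. So the proposal and the (cited) proof coincide.

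One small caveat: the shell estimate $N(X{+}Y')-N(X)=O(XY)$ should not be justified by ``the hyperbolic area estimate'' alone, since bounding lattice points in a thin shell by its area is essentially the statement you are proving. The non-circular route is to apply the pre-trace expansion to the non-negative kernel $K_Y^{+}-K_Y^{-}$ itself; the $s_0=1$ term contributes $O(XY)$ and the remaining spectrum is already dominated by the $O(X^{1/2}Y^{-1/2})$ bound you derived.
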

Here, the sum is over the small eigenvalues $\lambda_j=s_j(1-s_j)<1/4$ of the Laplacian of the hyperbolic surface $\Gamma \backslash \mathbb{H}$. Furthermore, $(u_j)_j$ is a corresponding maximal $L^2$-orthonormal set of eigenfunctions.

 Let now $\Gamma_1=\left\langle \gamma_1\right\rangle$, $\Gamma_2=\left\langle \gamma_2\right\rangle$ be one\hyp{}generator subgroups of $\Gamma$. Considering the nine possible combinations of types for $\gamma_1$, $\gamma_2$ (parabolic, elliptic, or hyperbolic) we can associate the  space of double cosets  $\Gamma_1 \backslash \Gamma \slash \Gamma_2$ with nine distinct counting problems. These problems arise by considering the $\Gamma_i$'s as stabilizer groups of geodesics, cusps, or points for the hyperbolic, parabolic, and elliptic case respectively. In \cite{good}, Good attempts to solve all nine problems simultaneously. He provides asymptotic formulae for the corresponding counting functions, with a claimed asymptotic error term of order $O\left( X^{2/3} \right)$.
His notation and techniques are very complicated and therefore hard to verify. This makes it difficult to incorporate them in related problems, such as the refined study of the behaviour of the error term. Therefore, it is important to consider more concrete approaches for each problem separately.  

 Huber \cite{huber} investigates the hyperbolic-elliptic case, managing to prove an error term of order $O(X^{3/4})$. With a more careful examination of the transforms involved, Chatzakos--Petridis \cite{chatzakos} recovered the error term $O(X^{2/3})$ which appeared in Good \cite{good}. 

We focus on the hyperbolic\hyp{}hyperbolic problem, corresponding to counting distances between a fixed geodesic $l_1$ (with stabilizer $\Gamma_1$) and the elements of the orbit of another fixed geodesic $l_2$ (with stabilizer $\Gamma_2$). For simplicity, we take $$\gamma_1=\gamma_2= \begin{pmatrix}
\lambda & 0\\
0 & \lambda^{-1}
\end{pmatrix}.$$
The corresponding closed geodesic is hence the vertical segment $l$ connecting $i$ with $\lambda^2\cdot i$.
The corresponding counting problem concerns estimating
$$N(X,l):=\#\left\{ \gamma \in \Gamma_1 \backslash \Gamma \slash \Gamma_2 \vert  \cosh{\hbox{dist}(\gamma l, l) } \leq X \right\}.$$
In \cite[Lemma 1]{mckee}, Martin--Mckee--Wambach show that,
 for $\gamma= \begin{pmatrix}
a & b\\
c & d
\end{pmatrix},$
we have $$\cosh{\hbox{dist}(\gamma l, l) }= \mathrm{max}\left(\left| B(\gamma) \right|,1\right), \,
\hbox{where } \; \; B(\gamma):=ad+bc.$$
Therefore, for $X>1$, we can write
\begin{equation} \label{lekkasn} N(X,l)=\#\left\{ \gamma \in \Gamma_1 \backslash \Gamma \slash \Gamma_2 \vert \left| B(\gamma) \right| \leq X \right\}. \end{equation}
Using methods similar to Huber, Lekkas \cite{lekkas} proved, independently of Good, the following result.
\begin{theorem}[Good \cite{good}, Lekkas \cite{lekkas}] \label{lekkasmain}  Let $\Gamma$ be a cocompact Fuchsian group with no elements having both diagonal entries equal to zero. For $N(X,l)$ as in equation (\ref{lekkasn}), we have
$$N(X,l)=\frac{2\left(\mathrm{len}(l)\right)^2}{\pi\mathrm{Vol}\left(\mathbb{H}/\Gamma\right)}X+\sum_{1/2 < s_j < 1}D(s_j)\hat{u}_{0,j}^2X^{s_j}+O\left( X^{2/3} \right),$$ where $\mathrm{len}(l)=2 \log{\lambda}$ is the hyperbolic length of the geodesic segment $l$ and $$D(s):=\frac{\Gamma \left(s-1/2 \right)\Gamma \left(s/2+1/2 \right)}{\left(\Gamma \left(s/2 \right)\right)^2\Gamma \left(s/2+1 \right)}. $$
\end{theorem}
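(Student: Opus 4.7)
My plan is to adapt the spectral/automorphic-kernel method that underlies Theorem \ref{classiccounting} to the hyperbolic-hyperbolic double coset problem, following the template Huber and Chatzakos--Petridis developed for the hyperbolic-elliptic case. I would first construct a $\Gamma_1$-left, $\Gamma_2$-right biinvariant function $B(\gamma;z,w)$ on $\H\times\H$ extending $B(\gamma)$ to arbitrary $z,w$ (so that $B(\gamma;z,w)=B(\gamma)$ whenever $z,w\in l$), and for a test function $\phi\colon\R\to\R_{\ge 0}$ form the automorphic kernel $K_\phi(z,w):=\sum_{\gamma\in\Gamma}\phi(B(\gamma;z,w))$. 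Unfolding the double-coset sum gives
\[
\int_{\Gamma_1\backslash l}\!\int_{\Gamma_2\backslash l} K_\phi(z,w)\,ds_z\,ds_w=\sum_{\gamma\in\Gamma_1\backslash\Gamma/\Gamma_2}\int_l\!\int_l \phi(B(\gamma;z,w))\,ds_z\,ds_w,
\]
so that for $\phi\approx\mathbf{1}_{[-X,X]}$ the right-hand side recovers $N(X,l)$ up to a boundary-smoothing bias.

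Since $\Gamma$ is cocompact, the spectrum of $D_0$ is purely discrete and $K_\phi$ has the convergent spectral expansion $\sum_j c_j(\phi)\,u_{0,j}(z)\overline{u_{0,j}(w)}$. Integrating twice along $l$ produces the squared hyperbolic periods $\hat u_{0,j}^{\,2}$, weighted by an explicit Selberg-type transform $\Phi(s_j)$ of $\phi$, where $\hat u_{0,j}:=\int_l u_{0,j}\,ds$. A direct computation---writing the double geodesic integral of $\phi(B(\gamma;z,w))$ as a beta/hypergeometric integral and then inverting Mellin---identifies $\Phi$ explicitly in terms of $\Gamma$-factors; for $\phi\approx\mathbf{1}_{[-X,X]}$ the leading behaviour is $\Phi(s)\sim D(s)X^s$. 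At $s_0=1$, using the constant eigenfunction $u_{0,0}=\mathrm{Vol}(\H/\Gamma)^{-1/2}$, the value $\hat u_{0,0}=\mathrm{len}(l)/\sqrt{\mathrm{Vol}(\H/\Gamma)}$, and the identity $D(1)=2/\pi$, this yields the claimed main term $\tfrac{2(\mathrm{len}(l))^2}{\pi\,\mathrm{Vol}(\H/\Gamma)}X$; the exceptional parameters $s_j\in(1/2,1)$ then contribute $D(s_j)\hat u_{0,j}^{\,2}X^{s_j}$.

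To reach the sharp cutoff and the $O(X^{2/3})$ error I would sandwich $\mathbf{1}_{[-X,X]}$ between smoothings $\phi_\pm$ of transition width $\delta$. The smoothing bias is $O(\delta X)$ from the extra mass near $|B|=X$, while the spectral remainder is bounded, after repeated integration by parts in the Mellin variable, by combining (i) polynomial decay of $\Phi_\pm(s)$ on vertical lines, (ii) Weyl's law for the counting function of $s_j$, and (iii) a second-moment bound $\sum_{|s_j|\le T}\hat u_{0,j}^{\,2}\ll T^2$ for the hyperbolic periods. Balancing the resulting $\delta^{-1/2}X^{1/2}$-type spectral term against the smoothing bias $\delta X$, with $\delta=X^{-1/3}$, produces the claimed $O(X^{2/3})$. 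The main technical obstacle will be the uniform-in-$s$ analysis of $\Phi$---the explicit identification of $D(s)$ and its polynomial decay on vertical lines---together with the second-moment estimate for the hyperbolic periods; the hypothesis excluding $\gamma$ with vanishing diagonal entries removes the elements with $B(\gamma)=-1$ that fix $l$ set-wise with reversed orientation and would otherwise contribute degenerate double cosets on the boundary of the counting region.
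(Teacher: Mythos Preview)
Your strategy---integrate an automorphic object over $l\times l$, unfold to a double-coset sum, spectrally expand to produce the squared periods $\hat u_{0,j}^{\,2}$ weighted by an explicit transform of the test function, then smooth and balance---is exactly the route the paper follows (attributing the $N_1$ case to Lekkas). Concretely it is packaged as part~(a) of Theorem~\ref{modtrace}, obtained by integrating Huber's single-variable series $A_f^{(0)}(z)=\sum_{\gamma\in\Gamma_1\backslash\Gamma}f\bigl(\cos^{-2}v(\gamma z)\bigr)$ over $l$ rather than by building a two-variable biinvariant extension $B(\gamma;z,w)$. Your construction of the latter is left vague; the Huber-coordinate route is what makes the transform $d^{(0)}_t(f)$ explicitly hypergeometric and hence accessible to the Stirling-type asymptotics of Section~\ref{estsection}.

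There is, however, a genuine gap in the quantitative input. The period estimate you quote, $\sum_{|s_j|\le T}\hat u_{0,j}^{\,2}\ll T^2$, is not what is used and does not by itself justify the $\delta^{-1/2}X^{1/2}$ spectral term you balance against: the decisive input is the \emph{linear} bound $\sum_{t_j\le T}\hat u_{0,j}^{\,2}\ll T$ of Lemma~\ref{lemmahuber} (Huber). Equally, ``polynomial decay of $\Phi_\pm$ on vertical lines via integration by parts'' is not enough; one needs \emph{two} distinct estimates for the transform, a crude one of the shape $X^{3/2}Y^{-1}t^{-\alpha}$ coming from the size of the smoothed test function (guaranteeing convergence of the tail), and a second one of the shape $X^{1/2}t^{-\alpha+1}$ obtained by applying the mean value theorem to the difference quotient defining the transform (see Lemmas~\ref{estimate1} and~\ref{estimate2} for the weight-one analogue). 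Using only the first bound recovers merely Huber's $O(X^{3/4})$; it is the interplay of the second bound with the linear period estimate, split dyadically at $T\sim X/Y$, that yields the $XY^{-1/2}$ spectral error balancing against $Y$ at $Y=X^{2/3}$.
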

Here, the \emph{periods} $\hat{u}_{0,j}$ of the Maa{\ss} forms $u_{0,j}(z)$ are defined by
$$\hat{u}_{0,j}:=\int_{l}u_{0,j}(z)\, ds.$$
In addition, we define the periods $\hat{u}_{1,j}$ by
$$\hat{u}_{1,j}:=\int_{l}u_{1,j}(z)\, ds,$$
where \begin{equation}\label{defofu1} u_{1,j}(z):=-2\lambda_j^{-1/2} \cdot \Im(z) \cdot \frac{\partial}{\partial z} u_{0,j}(z) \end{equation} are Maa{\ss} forms of weight $2$.

In \cite[p.116,~Thm.4]{good}, Good proves a more refined version of Theorem \ref{lekkasmain}. In particular, he provides separate estimates for the quantities 
$$N^{\mu,\mu'}\left(X \right):=\#\left\{ \gamma \in \Gamma_1 \backslash \Gamma \slash \Gamma_2 \vert  |ad+bc| \leq X, \; \mathrm{sign}(ab)=\mu, \; \mathrm{sign}(ac)=\mu' \right\},$$
where $\mu,\mu' \in \left\{ 1,-1\right\}$.

Geometrically, for $|ad+bc|>1$, the number $\mu$ corresponds to the direction of $\gamma l$ (clockwise for $\mu=1$ and anti-clockwise for $\mu=-1$), and the number $\mu'$ corresponds to which side of the imaginary axis $\gamma l$ lies in (positive for $\mu'=1$, negative for $\mu'=-1$) - see Figure \ref{figurefourc}. The case $|ad+bc| \leq 1$ corresponds to $\gamma l$ intersecting $l$. See for example \cite[Lemma 1]{mckee}.
\begin{figure}[ht]\label{figurefourc} \centering
\includegraphics[scale=0.5]{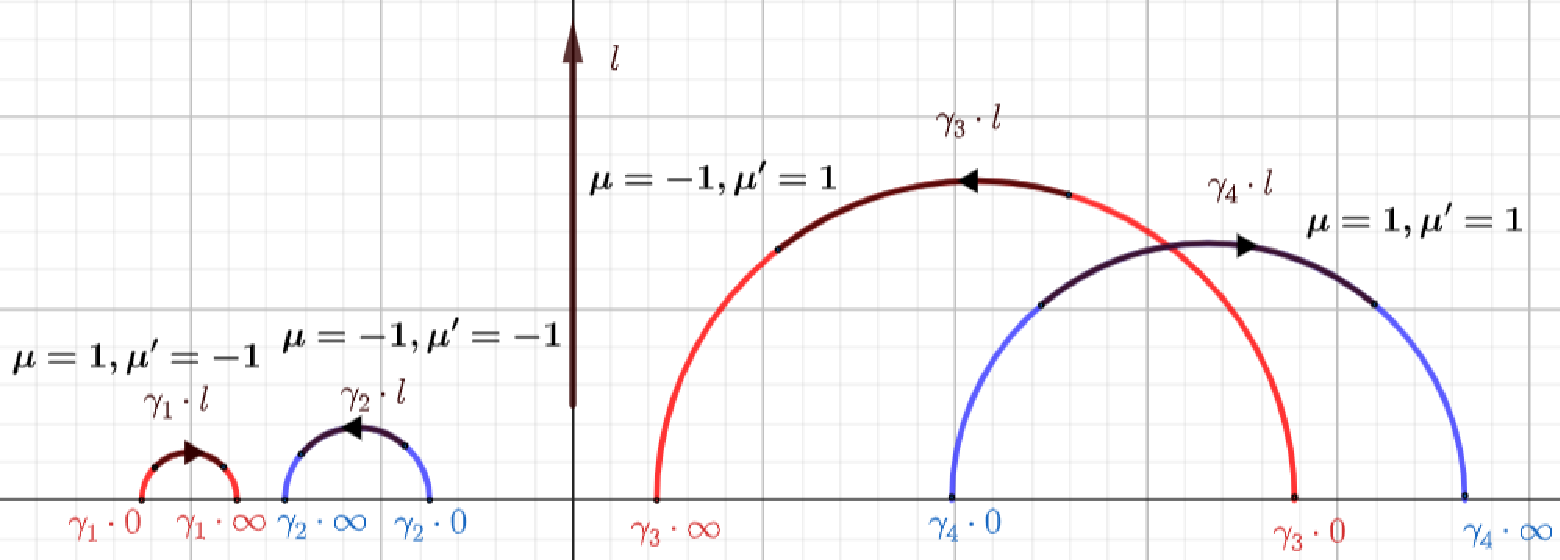}
\caption{Geometric interpretation of the four cases that correspond to the four possible choices of $(\mu,\mu')$. }
\end{figure}

In the series of papers \cite{hejhal},\cite{hejhal1},\cite{hejhal2},\cite{hejhal3} Hejhal states the asymptotic formulae for $N^{\mu,\mu'}$ and summarizes a strategy for the proof. 

In the following sections we will establish such asymptotics, by first developing a new relative trace formula (see Theorem \ref{modtrace}). In particular, we prove the following theorem.
\begin{theorem}\label{mainthm}
Let $\Gamma$ be a fixed cofinite Fuchsian group. For $\mu,\mu' \in \left\{ 1,-1\right\}$, we have
$$N^{\mu,\mu'}(X)=M^{\mu,\mu'}(X)+O\left(X^{2/3} \right),$$
where
\begin{align*}M^{\mu,\mu'}(X)=\frac{\left(\mathrm{len}(l)\right)^2}{2\pi\mathrm{Vol}\left(\Gamma \backslash\mathbb{H}\right)}X+\frac{1}{4}\sum_{1/2<s_j < 1}  D(s_j)\left( \hat{u}_{0,j}+\mu\cdot  a_j\hat{u}_{1,j}\right)\left(\hat{u}_{0,j}-\mu' \cdot a_j\hat{u}_{1,j}\right)X^{s_j}, \end{align*} the coefficients $D(s_j)$ is as in Theorem \ref{lekkasmain}, and
$$a_j=a(s_j):=\frac{\sqrt{\lambda_j}}{2} \left(\frac{\Gamma(s_j/2)}{\Gamma(s_j/2+1/2)}\right)^2.$$

\end{theorem}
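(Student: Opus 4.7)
The plan is to reduce the refined counting to the unsigned count plus three signed companions, each accessible through spectral theory with the weight-$2$ \Maass{} forms $u_{1,j}$. Using the identity
$$\mathbf{1}_{\mathrm{sign}(ab)=\mu}\,\mathbf{1}_{\mathrm{sign}(ac)=\mu'} = \tfrac14\bigl(1+\mu\,\mathrm{sign}(ab)\bigr)\bigl(1+\mu'\,\mathrm{sign}(ac)\bigr),$$
I would write
$$N^{\mu,\mu'}(X) = \tfrac14\bigl(N(X,l) + \mu\, A(X) + \mu'\, A'(X) + \mu\mu'\, C(X)\bigr) + O(1),$$
where $A, A', C$ denote the signed sums obtained by inserting $\mathrm{sign}(ab)$, $\mathrm{sign}(ac)$, and $\mathrm{sign}(ab)\cdot\mathrm{sign}(ac)$, respectively, and the $O(1)$ absorbs the finite number of cosets with $ab = 0$ or $ac = 0$. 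Since Theorem \ref{lekkasmain} already provides an expansion of $N(X,l)$ with error $O(X^{2/3})$, it suffices to establish analogous expansions for $A, A'$, and $C$, and then match the result against the factored expression in the statement.

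The next step is to interpret the signs geometrically and spectrally. Since $\gamma$ sends the fixed points $0,\infty$ of $\gamma_2$ to the endpoints $b/d, a/c$ of $\gamma l$, the sign $\mathrm{sign}(ac)$ records the side of the imaginary axis on which one endpoint lies, and $\mathrm{sign}(ab)$ encodes a companion orientation at the other endpoint. The key observation is that integration of a weight-$2$ \Maass{} form along an oriented geodesic picks up a natural oriented tangent factor via the automorphy factor $j_\gamma^2$; consequently, each of $A, A', C$ can be rewritten as a double integral over $l \times l$ of an automorphic kernel that transforms with weight $2$ on one or both arguments. This has the formal effect of replacing $u_{0,j}$ by $u_{1,j}$ in the appropriate slot(s) of Lekkas's spectral expansion.

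The spectral computation then proceeds as in the proof of Theorem \ref{lekkasmain}, using the orthonormal bases $(u_{m,j})_j$ for $m = 0, 1$ together with the Eisenstein series $E_{\mathfrak{a}, m}(z,s)$. The small eigenvalues $1/2 < s_j < 1$ produce the main terms; the coefficient $D(s_j)$ is unchanged, while each occurrence of $\hat{u}_{1,j}$ brings an extra factor $a(s_j)$ arising from the normalization $-2\lambda_j^{-1/2}$ in the definition of $u_{1,j}$ together with the Gamma-function ratio $(\Gamma(s_j/2)/\Gamma(s_j/2+1/2))^2$ produced by the weight-$2$ analogue of the Selberg transform of the test function. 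A convenient sanity check is that summing the four resulting expansions over $(\mu,\mu')$ collapses the cross terms to recover Theorem \ref{lekkasmain} exactly.

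The main obstacle, as in the classical hyperbolic--hyperbolic problem, is passing from a smoothed count to the sharp cutoff $|B(\gamma)| \le X$ while preserving the exponent $2/3$. I would use a smooth approximation of transition width $Y$, control the spectral tail using the decay of the weight-$m$ Selberg-type transform together with a local Weyl-type bound of the form $\sum_{|t_j| \le T}|\hat{u}_{m,j}|^2 \ll T$ for $m = 0, 1$, yielding a spectral error $O(X^{1/2} Y^{-1/2})$, and balance this against the smoothing error $O(X Y)$. Optimizing at $Y = X^{-1/3}$ gives $O(X^{2/3})$, and the continuous spectrum is absorbed into the same bound. The only genuinely new technical input beyond Lekkas's framework is verifying the decay of the weight-$1$ transform, which follows from a stationary-phase argument parallel to the weight-$0$ case but with shifted Gamma parameters.
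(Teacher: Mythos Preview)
Your outline is correct and matches the paper's strategy: the same four-way decomposition into $N_1,\ldots,N_4$, the same appearance of the weight-$2$ periods $\hat u_{1,j}$ in the spectral expansion, the local Weyl bound $\sum_{|t_j|\le T}|\hat u_{1,j}|^2\ll T$, and the same smoothing/optimization balance giving $X^{2/3}$. Two points where the paper is more explicit than your sketch: the mechanism for producing the sign factors is perturbation of the geodesic ($l\mapsto e^{i\theta}l$) followed by differentiation at $\theta=0$ rather than building weight-$2$ kernels directly, and for $C$ (the paper's $N_4$) the geometric side naturally produces the factor $B(\gamma)$ rather than $\mathrm{sign}(ad)$, which forces a nontrivial Abel-type inversion to construct the test function and leads to transform estimates involving a degenerate ${}_4F_3$ (Section~\ref{estsection})---considerably more than a parallel stationary-phase argument with shifted Gamma parameters.
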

\begin{conjecture} \label{halfconj}
 The exponent $2/3$ in Theorem \ref{mainthm} can be improved to $1/2+\epsilon$.
\end{conjecture}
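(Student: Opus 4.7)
The plan is to mimic the strategy used for the analogous improvement of the classical hyperbolic lattice point problem by Chamizo and Iwaniec, adapting it to the periods $\hat{u}_{m,j}$ that arise in our setting. The $O(X^{2/3})$ bound in Theorem \ref{equivmain} comes from choosing a smoothing scale $\delta \asymp X^{-1/3}$ for the indicator function $\mathbf{1}_{|B(\gamma)| \leq X}$: the smoothing error contributes $O(X \delta)$, while the spectral side, truncated at height $T \asymp \delta^{-1}$, is bounded trivially by $O(T)$ using Weyl's law for the periods. Equating these gives $\delta = X^{-1/3}$ and the error $X^{2/3}$. To reach $X^{1/2+\epsilon}$ one must take $\delta \asymp X^{-1/2}$, and then the spectral side, now running up to height $T \asymp X^{1/2}$ with $\asymp T^2 = X$ terms, must be controlled to essentially square-root cancellation.

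The first step is to choose a smooth test function $k_{X,\delta}$ approximating $\mathbf{1}_{|B(\gamma)| \leq X}$ at scale $\delta$, and to apply the spectral identity (the analogue of the pre-trace formula used for Theorems \ref{lekkasmain} and \ref{mainthm}) to $k_{X,\delta}$. After extracting the main term $M^{\mu,\mu'}(X)$, the remaining discrete contribution takes the schematic form
\begin{equation*}
\sum_{t_j \leq T} h(t_j)\bigl(\hat{u}_{0,j} + \mu\, a_j \hat{u}_{1,j}\bigr)\bigl(\hat{u}_{0,j} - \mu'\, a_j \hat{u}_{1,j}\bigr) X^{1/2+it_j},
\end{equation*}
with an analogous contribution from the continuous spectrum involving $\hat{E}_{\mathfrak{a},m}(1/2+it)$. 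The second step is to bound this sum using a large-sieve inequality for the periods along the closed geodesic $l$; results of Reznikov and Bernstein--Reznikov, as well as the mean-square bound of order $O(X^{1/2}\log X)$ proven in this paper, suggest that $\sum_{t_j \leq T}|\hat{u}_{0,j}|^2 \ll T$ (with matching bounds for the cross terms via Cauchy--Schwarz), which combined with square-root cancellation in the phases $X^{it_j}$ on average would give the desired $T^{1/2} X^{1/2+\epsilon} = X^{1/2+\epsilon}$.

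The main obstacle is precisely this spectral large-sieve step with oscillating phases: without additional input, Cauchy--Schwarz over the spectrum only yields $X^{1/2} \cdot T^{1/2} = X^{3/4}$, so one needs a genuine short-interval second-moment bound of the form
\begin{equation*}
\sum_{T \leq t_j \leq T+U} |\hat{u}_{0,j}|^2 \ll (U + T^{1/2}) T^{\epsilon}
\end{equation*}
for $U$ as small as $T^{1/2}$, together with analogous short-interval bounds for $|\hat{u}_{1,j}|^2$ and for $\hat{u}_{0,j}\hat{u}_{1,j}$. The scaling factor $a_j = a(s_j)$ grows like $\sqrt{\lambda_j}$, so the weight-$1$ periods are automatically weighted more heavily and force the weight-$2$ moment to be the decisive obstacle; the cross terms in $N_2$ and $N_3$ additionally require a Cauchy--Schwarz-stable version. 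I would expect the most difficult input to be the uniform subconvex or square-root bound for mixed second moments $\sum_{t_j \leq T} \hat{u}_{0,j}\hat{u}_{1,j} e^{it_j \log X}$, which, even for congruence groups where one can pass to $L$-functions via Watson--Ichino-type identities, corresponds to a genuinely open mean value of triple products.
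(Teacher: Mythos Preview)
The statement you are attempting to prove is labeled a \emph{conjecture} in the paper, and the paper does not prove it; there is no proof in the paper to compare against.

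Your proposal rests on a false premise. You write that you will ``mimic the strategy used for the analogous improvement of the classical hyperbolic lattice point problem by Chamizo and Iwaniec,'' but no such improvement exists: the pointwise bound $O(X^{2/3})$ in Theorem~\ref{classiccounting} remains the best known error term for the classical (elliptic--elliptic) problem. Chamizo's contribution is the mean-square bound $O(X^{1/2}\log X)$, which is precisely what the present paper extends to the hyperbolic--hyperbolic setting in Theorem~\ref{newmserr}; it is not a pointwise result. So there is no template to mimic, and the conjecture is open in every known instance of the hyperbolic lattice point problem.

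To your credit, the last paragraph of your proposal correctly identifies the real obstruction: to push $\delta$ down to $X^{-1/2}$ one would need short-interval second-moment bounds for the periods of the form
\[
\sum_{T\le t_j\le T+U}|\hat u_{m,j}|^2 \ll (U+T^{1/2})T^{\epsilon},
\]
or equivalently nontrivial cancellation in the twisted sums $\sum_{t_j\le T}\hat u_{m,j}\hat u_{m',j}X^{it_j}$. These are not known even in the classical case with $\hat u_{0,j}$ replaced by $u_{0,j}(z)$, and your reduction does not make them any more accessible. So what you have written is a heuristic outline that reduces one open problem to another of at least equal difficulty, not a proof.

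One minor correction: your claim that $a_j$ grows like $\sqrt{\lambda_j}$ is incorrect on the critical line. For $s_j=\tfrac12+it_j$, Stirling's formula gives
\[
\left(\frac{\Gamma(s_j/2)}{\Gamma(s_j/2+1/2)}\right)^2 \asymp |t_j|^{-1},
\]
which cancels the factor $\sqrt{\lambda_j}\asymp|t_j|$, so $a_j=O(1)$. The weight-$0$ and weight-$1$ periods therefore enter on an equal footing, contrary to your assertion that the weight-$1$ moment is automatically the decisive obstacle.
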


We are also interested in mean square errors. For example, in \cite{chamizo2}, Chamizo proves that the mean square error for the elliptic-elliptic counting problem is of order $O(X^{1/2}\log{X})$, as an application of the large sieve inequality he develops in \cite{chamizo1}. In \cite{chatzakos}, Chatzakos--Petridis use the same large sieve inequality to prove the same bound for the mean square error for the elliptic-hyperbolic counting problem. In \cite{voskoulekkas}, using Theorem \ref{modtrace}, we develop a large sieve inequality with weights the periods $\hat{u}_{m,j}$ instead of the values $u_{0,j}(z)$ from Chamizo. Lekkas \cite{lekkas} uses the case $m=0$ to prove a slightly worse upper bound for the mean square error term of the hyperbolic-hyperbolic problem. In this paper, we use these sieve inequalities to prove that the upper bound $X^{1/2}\log{X}$ is still valid for the hyperbolic-hyperbolic problem, even when its four cases are considered separately. This implies an averaged version of Conjecture \ref{halfconj}.
\begin{theorem}
   \label{newmserr}
     For $\mu,\mu' \in \left\{-1,1\right\}$, let \begin{equation} \label{errordef} E^{\mu,\mu'}(X):=N^{\mu,\mu'}(X)-M^{\mu,\mu'}(X),
     \end{equation}
     where
     $N^{\mu,\mu'}$ and $ M^{\mu,\mu'}$ are as in Theorem \ref{mainthm}.
     Then, as $X \rightarrow +\infty$, we have
    $$ \frac{1}{X}\int_{X}^{2X} \left|E^{\mu,\mu'}(x)\right|^2 \, dx\ll X\log^2{X}.$$
\end{theorem}
Finally, applying Theorems \ref{mainthm} and \ref{newmserr} for appropriate groups associated with quaternion algebras, we will prove the following theorem. For $p=5$, the first part is also stated by Hejhal (see \cite[Thm.2]{hejhal}).
\begin{theorem} \label{ideals} Let $p$ be fixed prime number, and let $c_p$ be a constant defined by
\begin{equation*}c_p :=p+\left(\frac{2}{p} \right) = \left\{
\begin{array}{ll}
      \displaystyle p-1, & p=  \pm 3\,  \left( \mathrm{mod} \; 8 \right), \\
      \displaystyle p+1, & p=  \pm 1 \,  \left( \mathrm{mod} \; 8 \right),\\
      \displaystyle p, & p=2. \\
\end{array} 
\right.
\end{equation*}We have
\begin{equation} \label{corras} \sum_{n \leq X}\mathcal{N}(n)\mathcal{N}(pn\pm 1)=\frac{4p}{c_p} \cdot    \left(\frac{\log{\epsilon}}{\pi}\right)^2X+\sum_{1/2 < s_j < 1}a^{\pm}_jX^{s_j}+E^{\pm}\left(X \right), \end{equation}
with
$$E^{\pm}\left(X\right)=O\left(X^{2/3}\right),$$
where $\mathcal{N}(n)$ is the number of ideals $\mathfrak{a}$ of $\mathbb{Z}\left[\sqrt{2}\right]$ with $N(\mathfrak{a})=n$, $\epsilon$ is the corresponding fundamental unit, and $a^{\pm}_{j}$ are real numbers.
Furthermore, we have
$$ \frac{1}{X}\int_{X}^{2X} \left|E^{\pm}(x)\right|^2 \, dx\ll X\log^2{X}.$$
\end{theorem}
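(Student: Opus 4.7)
The plan is to apply Theorems \ref{mainthm} and \ref{newmserr} to a suitable arithmetic cofinite Fuchsian group $\Gamma_p$ coming from an order in an indefinite quaternion algebra $A_p$ over $\Q$, and then translate the double coset counts into the correlation sums of ideal counts in $\Z[\sqrt{2}]$. For $p\equiv\pm 3\pmod{8}$ I would take $A_p$ to be the quaternion algebra with Hilbert symbol $(2,p)_{\Q}$; a direct Hilbert symbol computation shows that this is a division algebra ramified exactly at $\{2,p\}$. For $p\equiv\pm 1\pmod{8}$ the same algebra splits, and I instead take $\Gamma_p$ to be the unit group of an Eichler order of level $p$ in $M_2(\Q)$, which is conjugate to $\Gamma_0(p)\subset\mathrm{PSL}_2(\Z)$. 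The prime $p=2$ is handled by a small variant. The three values of $c_p$ match the volume of $\Gamma_p\backslash\H$: in the division case via the Eichler mass formula $\mathrm{Vol}=\tfrac{\pi}{3}(2-1)(p-1)=\tfrac{\pi(p-1)}{3}$, and in the split case via $[\mathrm{PSL}_2(\Z):\Gamma_0(p)]=p+1$ giving $\mathrm{Vol}=\tfrac{\pi(p+1)}{3}$.

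Fix a basis $\{1,i,j,k\}$ of $A_p$ with $i^2=2$, $j^2=p$, $k=ij$, and embed $A_p\otimes\R\hookrightarrow M_2(\R)$ by $i\mapsto\mathrm{diag}(\sqrt{2},-\sqrt{2})$ and $j\mapsto\begin{pmatrix}0&1\\p&0\end{pmatrix}$. Then an integral element $\alpha+\beta i+\gamma j+\delta k$ is sent to $\begin{pmatrix}\eta&\xi\\p\xi'&\eta'\end{pmatrix}$, where $\eta=\alpha+\beta\sqrt{2}$, $\xi=\gamma+\delta\sqrt{2}\in\Z[\sqrt{2}]$, and $\eta',\xi'$ denote their Galois conjugates in $F=\Q(\sqrt{2})$. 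Thus $ad=N_F(\eta)$, $bc=pN_F(\xi)$; the reduced norm-one condition reads $N_F(\eta)-pN_F(\xi)=1$, and $B(\gamma)=1+2pN_F(\xi)$. Since $F$ has class number $1$ and fundamental unit $\epsilon=1+\sqrt{2}$, nonzero ideals of norm $n$ biject with elements of norm $\pm n$ modulo $\pm\epsilon^{\Z}$, so pairs $(\mathfrak{a},\mathfrak{b})$ with $N\mathfrak{b}=pN\mathfrak{a}\pm 1$ correspond to pairs $(\xi,\eta)$ satisfying $N_F(\eta)-pN_F(\xi)=1$ with $\mathrm{sign}\,N_F(\xi)=\pm 1$.

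The stabilizer $\Gamma_1=\Gamma_2$ of the fixed closed geodesic $l$ is generated by $\mathrm{diag}(\epsilon^2,\epsilon^{-2})$, the smallest totally positive diagonal unit of norm $+1$; hence $\mathrm{len}(l)=2\log\epsilon$, and the $\Gamma_1\times\Gamma_2$-action on double cosets matches the $\epsilon^{2\Z}$-scaling $(\eta,\xi)\mapsto(\epsilon^{2k}\eta,\epsilon^{2k}\xi)$. A short sign computation using $\mathrm{sign}(ab)\cdot\mathrm{sign}(ac)=\mathrm{sign}\,N_F(\xi)$ identifies those double cosets with $\mu\mu'=+1$ (resp.\ $-1$) as exactly those with $N_F(\xi)>0$ (resp.\ $<0$); up to an explicit unit-orbit constant they parameterize the pairs in $\sum_{n\leq(X-1)/(2p)}\mathcal{N}(n)\mathcal{N}(pn+1)$ (resp.\ $pn-1$). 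By \eqref{ndeltaequiv} the relevant combinations are $N^{1,1}+N^{-1,-1}=\tfrac{1}{2}(N_1+N_4)$ and $N^{-1,1}+N^{1,-1}=\tfrac{1}{2}(N_1-N_4)$. Substituting the main term of Theorem \ref{mainthm} with $\mathrm{len}(l)=2\log\epsilon$ and the above volumes yields the claimed leading constant $\tfrac{4p}{c_p}(\log\epsilon/\pi)^2$, while the error $O(X^{2/3})$ and the mean-square bound $X\log^2 X$ are inherited directly from Theorems \ref{mainthm} and \ref{newmserr}.

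The principal obstacle will be the unit-orbit bookkeeping: because $N_F(\epsilon)=-1$, the signs of $N_F(\eta)$ and $N_F(\xi)$ are not preserved by the full unit group $\pm\epsilon^{\Z}$, so one must restrict carefully to the index-two subgroup $\epsilon^{2\Z}$ that matches $\Gamma_1$, accompanied by a factor of $2$ for the $\pm$ ambiguity, and verify that these multiplicities combine with the volume and length factors to produce precisely the prefactor $4p/c_p$ rather than a slightly different rational multiple.
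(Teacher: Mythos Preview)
Your overall strategy---apply Theorems \ref{mainthm} and \ref{newmserr} to the norm-one group of an order in the quaternion algebra $(2,p)_{\Q}$, and identify double cosets with pairs of ideals of $\Z[\sqrt2]$---is exactly the paper's. However, several of the concrete arithmetic inputs are off, and they do not cancel. First, since $N_{\Q(\sqrt2)/\Q}(\epsilon)=-1$, the element $\mathrm{diag}(\epsilon,\epsilon^{-1})$ has determinant $-1$ and is not in $\Gamma$; the primitive diagonal element is indeed $\mathrm{diag}(\epsilon^{2},\epsilon^{-2})$, as you write, but this gives $\mathrm{len}(l)=2\log(\epsilon^{2})=4\log\epsilon$, not $2\log\epsilon$. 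Second, your parametrisation $\alpha+\beta i+\gamma j+\delta k$ with integer coefficients is the order $\mathcal J=\Z\langle 1,i,j,ij\rangle$, which is \emph{not} maximal; the Eichler mass formula value $\tfrac{\pi}{3}(p-1)$ you quote is the covolume of the maximal-order group, and $\mathcal J$ sits inside a maximal order with index $6$. The paper works entirely with $\mathcal J$ and shows (Lemma~\ref{cplemma}) that $\mathrm{Vol}(\Gamma\backslash\H)=2\pi c_p$ in every case. Third, in the split case $p\equiv\pm1\pmod8$ you cannot simply replace $\Gamma$ by $\Gamma_0(p)$: the bijection with pairs of $\Z[\sqrt2]$-ideals comes from the specific embedding of $(2,p)_{\Q}$, so one must keep the \emph{same} group $\Gamma$ and instead compute its covolume by exhibiting it as (a conjugate of) a congruence subgroup of level $4p$ and index $6(p+1)$, which is what the paper does.

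The paper also makes explicit the ``unit-orbit constant'' you flag as the principal obstacle: the map $\gamma\mapsto((a),(b))$ from $\Gamma_1\backslash\Gamma/\Gamma_1\setminus\{\mathrm{id}\}$ to ideal pairs is four-to-one, the four preimages being the classes of $\bigl(\begin{smallmatrix}\pm a&b\\c&\pm d\end{smallmatrix}\bigr)$ and $\bigl(\begin{smallmatrix}\pm\epsilon^{2}a&b\\c&\pm\epsilon^{-2}d\end{smallmatrix}\bigr)$. With $\mathrm{len}(l)=4\log\epsilon$, $\mathrm{Vol}=2\pi c_p$, the factor $4$, and the rescaling $n\leq X/(2p)$, the leading constant in Theorem~\ref{mainthm} becomes $\dfrac{(4\log\epsilon)^{2}}{2\pi\cdot 2\pi c_p}\cdot\dfrac{2}{4}\cdot 2p=\dfrac{4p}{c_p}\Bigl(\dfrac{\log\epsilon}{\pi}\Bigr)^{2}$, as required; with your values it does not. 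Your sign identification $\mu\mu'=\mathrm{sign}(bc)=\mathrm{sign}\,N_F(\xi)$ and the use of $(N_1\pm N_4)/2$ are correct and match the paper.
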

\begin{remark} \label{jaclan}
For small values of $p$, in particular $p \in \left\{p: p<70\right\} \cup \left\{83, 101, 107, 109 \right\}$, the summation in the middle term is empty. This is due to proved cases of Selberg's $1/4$ eigenvalue conjecture. In particular, for the case $p = \pm 3 \pmod{8}$, from the explicit Jacquet--Langlands correspondence, the spectrum of the group $\Gamma$ defined in equation (\ref{quatgroup}) is the same to the spectrum of the group $\Gamma_0(8p)$ (see, for example, the proof of Hejhal in \cite{hejhalquat}).  On the other hand, for the cases $p = \pm 1 \pmod{8}$ and $p=2$, $\Gamma$ can be shown to be conjugate to a congruence group of level $4p$. In \cite{booker}, Booker, Lee, and Str{\"o}mbergsson verified Selberg's $1/4$ eigenvalue conjecture in $\Gamma_0(N)$ for $N<880$, and in $\Gamma(N)$ for $N<226$.
\end{remark}
\begin{remark}
With the same techniques, we can consider  $\mathbb{Q}\left(\sqrt{q}\right)$ with narrow class number $1$ instead of $\mathbb{Q}\left(\sqrt{2}\right)$.
\end{remark}
\subsection{Summary}
\,

In Section \ref{sesection}, we study the spectral expansion of the $A_f^{(1)}$ series defined in equation (\ref{hseries2}). In Section \ref{mrtfsection}, we use this spectral expansion to prove certain modified relative trace formulae (Theorem \ref{modtrace}), which we will use in the proofs of Theorems \ref{equivmain} and \ref{newmserr}. These are also a key ingredient in the proof of Theorem \ref{sievevar}, which appears in \cite[Thm.3]{voskoulekkas}.

In Sections \ref{choicesection} to \ref{mainthmsection}, we prove an equivalent form of Theorem \ref{mainthm}, namely Theorem \ref{equivmain}. For the proof, our methods have many similarities with the ideas discussed in Hejhal \cite{hejhal,hejhal1,hejhal2,hejhal3}. 
In Section \ref{choicesection}, we choose kernels that make the geometric sides of Theorem \ref{modtrace} asymptotically equal to the quantities $N_i$ that appear in Theorem \ref{equivmain}. 
In Section \ref{estsection}, we provide estimates for the special functions appearing in the spectral side of our trace formulae, which we use in the proof of Theorem \ref{mainthm}.  In Section \ref{secperiods}, we prove upper bounds for the mean square of the periods $\hat{u}_{1,j}$, in a similar manner with the upper bound for the mean square of $\hat{u}_{0,j}$, see Huber \cite[Eq.63]{huber}. This is a weaker version of \cite[Thm 1]{hejhal2}. The stronger version is not necessary for our arguments.
In Section \ref{mainthmsection}, we finish the proof of Theorem \ref{equivmain} and, therefore, of Theorem \ref{mainthm}.

Then, in Section \ref{sectionmse}, we use the large sieve inequalities from \cite{voskoulekkas} and the estimates from Sections \ref{estsection} and \ref{secperiods} to prove Theorem \ref{newmserr} about the mean square of the error term.

Finally, in Section \ref{arithmeticsection}, we apply  Theorems \ref{mainthm} and \ref{newmserr} for certain arithmetic groups arising from quaternion algebras to deduce Theorem \ref{ideals}.
In Appendix \ref{spapp} we provide results for generalized hypergeometric functions that we use in Section \ref{estsection}.
\section{Preliminaries} \label{setupsection}

We denote by $\mathfrak{h}_m$ the space of $L^2$\hyp{}functions that transform as
$$F(\gamma z)=j^{2m}_{\gamma}(z)F(z)$$ under $\Gamma$, where, for $\gamma =\begin{pmatrix}
a & b \\
c & d 
\end{pmatrix}$ we have \begin{equation}j_{\gamma}(z):=\frac{cz+d}{\left|cz+d\right|}.  \label{jfactor} \end{equation}
Here, $L^2$ is the space of functions $f$ such that $\left \langle f, \, f \right \rangle$ is finite. The inner product  $\left \langle f, \, g \right \rangle$ is defined by
$$\left \langle f, \, g \right \rangle := \int_{\Gamma \backslash \mathbb{H}}f(z) \cdot \overline{g(z)} \; \,\frac{dx \, dy}{y^2}.$$

Let \begin{equation} \label{dms} D_m:=y^2 \left ( \frac{\partial ^2}{\partial x^2} + \frac{\partial^2}{\partial y^2} \right) - 2imy\frac{\partial}{\partial x} \end{equation} be the Laplacian in $\mathfrak{h}_m$.

Fix a maximal orthonormal set of real-valued $\mathfrak{h}_0$\hyp{}eigenfunctions $\left(u_{0,j}\right)_j$ for the discrete spectrum of $D_0$, with corresponding eigenvalues $\lambda_j=s_j(1-s_j)$. Let also $E_{\mathfrak{a}}\left(z, \, s\right)$ denote the Eisenstein series with respect to the cusp $\mathfrak{a}$ (see \cite[(3.11)]{iwaniec}). We define the Maa{\ss} raising operators by
$$K_m:=(z-\bar{z})\frac{\partial}{\partial z}+m.$$
It can be shown that $K_m$ maps $\mathfrak{h}_m$ to $\mathfrak{h}_{m+1}$ (see \cite[p.308]{roelcke}). Furthermore, for any $m$, the functions $\left(u_{m,j}\right)_j$ defined recursively by $$u_{m+1,j}:=\frac{i}{\sqrt{\lambda_j+m^2+m}} \cdot K_mu_{m,j}$$ form an orthonormal $\mathfrak{h}_m$\hyp{}eigenbasis for the discrete spectrum of $D_m$ with the same corresponding eigenvalues (see \cite[ p.146, eq.11]{fay}). This generalizes the definition of $u_{1,j}$ given in equation (\ref{defofu1}).
In a similar manner, we define
$$E_{\mathfrak{a},0}\left(z, \, s\right):=E_{\mathfrak{a}}\left(z, \, s\right), \quad E_{\mathfrak{a},m+1}\left(z, \, s\right):=\frac{i}{\sqrt{1/4+t^2+m^2+m}} \cdot K_m E_{\mathfrak{a},m}\left(z, \, s\right).$$
We further define the associated periods by
$$\hat{u}_{m,j}:=\int_{l}u_{m,j}(z) ds(z), \; \hat{E}_{\mathfrak{a}, m}(s'):=\int_{l}E_{\mathfrak{a},m}\left(z, \, s'\right) ds(z).$$

Let  $k:\H \times \H \longrightarrow \mathbb{C}$ be a sufficiently smooth and rapidly decaying function, where $k(z,w)$ is a function of $u(z,w)$, where
$2u(z,w)+1=\cosh{\left(\rho(z,w)\right)}.$ In a slight abuse of notation, we write $$k(u)=k(u(z,w))=k(z,w).$$
Define further the automorphization $$K(z,w):=\sum_{\gamma}k(z,\gamma w),$$ called an \emph{automorphic kernel}.

In particular, for $k(u)=\mathbbm{1}_{[0,X]}(4u+2)$, we have
$K(z,w)=N(z,w,X)$. Theorem \ref{classiccounting} can be derived by considering the spectral expansion of the automorphic kernel corresponding to a smoothing of $k(u)$ (see \cite[Thm 12.1]{iwaniec}).

In our methods, we use repeatedly the system of coordinates $(u,v)$, called the \emph{Huber coordinates}, and defined by 
\begin{equation} \label{coordsys} u=\log{|z|}, \; \; \; \; v=-\arctan{\left(\frac{x}{y}\right)}, \nonumber \end{equation}
or, equivalently,
$$x=-e^u\sin{v}, \quad y=e^u\cos{v}. $$  
We note that, for $\gamma$ diagonal, we have $$v\left(\gamma z\right)=v(z),$$
and $$u\left(\gamma z \right)=u(z)+\log{\nu},$$ where $\nu$ is the norm of $\gamma$. Note further that $v(z)$ can be interpreted as the anticlockwise angle formed between $z=ie^{u+iv}$ and the positive imaginary axis.

With respect to Huber coordinates, it is easy to see that
\begin{equation} \label{rop} K_m=e^{-iv}\cos{v}\left(\frac{\partial}{\partial u}-i\frac{\partial}{\partial v}\right)+m, \end{equation}
and
\begin{equation}\label{meshub} \, d\mu(z)=\frac{1}{\cos^2{v}}\, du\, dv. \end{equation}
The following two series over cosets in $\Gamma_1 \backslash \Gamma$ are of high importance in our work:

\begin{align}
&A^{(0)}_f(z):=\sum_{\gamma \in \Gamma_1 \backslash \Gamma} f\left( \frac{1}{ \cos^2\left( v\left( \gamma z \right) \right)} \right), \label{hseries1} \\
&A^{(1)}_f(z):=\sum_{\gamma \in \Gamma_1 \backslash \Gamma} \tan{\left(v\left( \gamma z \right) \right)}f\left( \frac{1}{ \cos^2\left( v\left( \gamma z \right) \right)} \right). \label{hseries2}
\end{align}
These are well-defined elements of $L^2{\left(\Gamma \backslash \H \right)}$ when, for example, $f$ is a continuous function with exponential decay, defined in the interval $[1,+\infty)$. This follows from \cite[Thm 1.1]{chatzakos} and partial summation. For our purposes, it is enough to consider $f$ having compact support. We make the additional assumption that $f$ is piecewise differentiable.

In the following sections, we will use the spectral expansions of these series to prove the following Theorem, which is equivalent to Theorem \ref{mainthm}.
\begin{theorem} \label{equivmain}
For any $i \in \left\{1,2,3,4\right\}$,

$$N_i\left(X \right)=\frac{2\delta_{1i}\left(\mathrm{len}(l)\right)^2}{\pi\mathrm{Vol}\left(\mathbb{H}/\Gamma\right)}X+\sum_{1/2<s_j < 1}c_i(s_j)X^{s_j}+O\left(X^{2/3}\right),$$
where \begin{align*}
N_1\left(X\right):=&\sum_{\gamma: |B(\gamma)|<X}1,\\
N_2\left(X\right):=&\sum_{\gamma: |B(\gamma)|<X}\mathrm{sign}(ab), \\
N_3\left(X\right):=&\sum_{\gamma: |B(\gamma)|<X} \mathrm{sign}(ac), \\
N_4\left(X\right):=&\sum_{\gamma: |B(\gamma)|<X} \mathrm{sign}(ad) =\sum_{\gamma: |B(\gamma)|<X} \mathrm{sign}\left( B(\gamma)\right) +O(1),
\end{align*}
and $$c_1(s_j)=D(s_j)  \hat{u}_{0,j}^2, \quad c_2(s_j)=D(s_j) a_j \hat{u}_{0,j}  \hat{u}_{1,j}, \quad c_3(s_j)=-c_2(s_j), \quad c_4(s_j)=-D(s_j)a^{2}_j  \hat{u}_{1,j}^2,$$
where the quantities $D(s_j)$ and $a_j$ are defined as in Theorem \ref{mainthm}.
\end{theorem}
The equivalence of Theorems $\ref{mainthm}$ and $\ref{equivmain}$ is given by
\begin{equation}\label{ndeltaequiv}N^{\mu,\mu'}\left(X\right)=\frac{N_1\left(X\right)+\mu\cdot N_2\left(X\right)+\mu'\cdot N_3\left(X\right)+\mu\mu'\cdot N_4\left(X\right)}{4}.\end{equation}

\begin{remark}\label{th1rem1}
Using the bijection $\gamma \leftrightarrow \gamma^{-1}$, we can see that $N_{2}\left(X \right)=-N_{3}\left(X \right)+O(1)$. Furthermore, we note that $N_1(X)=N(X,l)$ as in Theorem \ref{lekkasmain}. Therefore, it is enough to consider only $N_3(X)$ and $N_4(X)$.
\end{remark}

\begin{remark}
   If $\Gamma$ contains an element with zero diagonal elements, say $\gamma'$, then the bijection $\gamma \leftrightarrow \gamma' \cdot \gamma $ gives that $N_2(X),N_3(X),N_4(X)=O(1)$ and that all the periods $\hat{u}_{1,j}^2$ are identically $0$. Hence, in that case, Theorem \ref{mainthm} follows directly from Theorem \ref{lekkasmain}. Therefore, we will assume that no such element exists.
\end{remark}
\begin{remark}
    In the notation of Hejhal (see \cite[Thm.1]{hejhal1}), the series $D_{a}$ corresponds to our $N_1$, the series $D_{b}$ corresponds to $N_4$, and the series $D_{c}$, $D_{d}$ correspond jointly to our $N_2$, $N_3$.
\end{remark}

Assuming without loss of generality that $l \subset I$, the positive imaginary axis, we notice that (see \cite[Lemma 1]{voskoulekkas}), for
$$I(\phi):=\int_{l}K(z,e^{i \phi} w) ds(w),$$
 we have $$I(0)=A_{g}^{(0)}(z), \quad I'(0)=A_{h}^{(1)}(z),$$
where

\begin{align*}g(p)&:=\int_{0}^{+\infty}k\left(\frac{p}{4r}+\frac{r}{4}-\frac{1}{2} \right) \frac{dr}{r}, \nonumber \\
h(p)&:=\frac{1}{2}\int_{0}^{+\infty}k'\left(\frac{p}{4r}+\frac{r}{4}-\frac{1}{2} \right) \frac{dr}{r}. \nonumber \end{align*}
The series $A^{(0)}_f(z)$ and its spectral expansion have been studied by Huber \cite{huber}, derived in a different fashion. Huber \cite{huber} and Chatzakos--Petridis \cite{chatzakos} used this series to study the elliptic-hyperbolic case. 
 In \cite{lekkas}, Lekkas considers the integral $$I_{f,0}:=\int_{l} A_f^{(0)}(z) ds$$
to study the sum $N_1(X)$. 

In a similar manner, we can relate the quantities $N_2(X),N_3(X),N_4(X)$ with $I'_{f,0}(0)$, $I_{f,1}(0)$ and $I'_{f,1}(0)$ correspondingly for appropriate choices of the function $f$, where
\begin{equation} \label{iintdef}I_{f,j}(\theta):=\int_{l} A_f^{(j)}(e^{i \theta}z) ds. \end{equation}
We note that, for $$I_{f}(\theta,\phi):=\int_{l}\int_{l}K(e^{-i \theta}z,e^{i \phi} w) ds(w) ds(z),$$ and appropriate choices of $k$ and $f$, 
$I'_{f,0}(0)$ corresponds to the partial derivative
$ \partial_\theta I_f(0,0)$, 
$I_{f,1}(0)$ corresponds to the partial derivative
$\partial_\phi I_f(0,0)$, and, finally, $I'_{f,1}(0)$ corresponds to the mixed partial derivative
$\partial^2_{\phi \, \theta} I_f(0,0)$. 
By the symmetry of $K(z,w)$, we re-obtain the relation $N_{2}\left(X \right)=-N_{3}\left(X \right)+O(1).$

The correspondence between the quantities $N_2(X),N_3(X),N_4(X)$ and
$\partial_{\theta} I_f(0,0)$, $\partial_{\phi} I_f(0,0)$, $ \partial^2_{\phi \, \theta} I_f(0,0)$
can be demonstrated geometrically in the following way (see Figure \ref{figurefourc2}): We denote the segment $e^{i \omega}l$ by $l_\omega$. We consider how the distance between $l_{\phi}$ and $\gamma l_{\theta}$ changes for $\theta, \phi$ close to $0$, for different choices of $\gamma$ with $|B(\gamma)|>1$. We observe that for positive $\phi$, $l_{\phi}$ moves to the right. Therefore the distance from $\gamma l$ decreases for $ac<0$ but increases for $ac>0$. Equivalently, the sign of the derivative of $d(l_{\phi},\gamma l)$ at $\phi=0$ is $\mu'=\hbox{sign}(ac)$. 

Similarly, we can see that $\gamma l_\theta$ becomes larger (in terms of, say, Euclidean area enclosed above the real axis) if $ab>0$ but smaller if $ab<0$. In other words, it moves closer to or further from $l$ accordingly. To see why this is the case, we note that the sign of $ab$ corresponds to whether the region $\hbox{Re}(z)<0$ maps to the inside or the outside of the region enclosed by $\gamma l$ and the real axis. Hence, when $l_{\phi}$ moves to left, $\gamma l_{\phi}$ moves towards the inside or the outside of the corresponding region accordingly. Therefore, the sign of the derivative of $d(l,\gamma l_{\theta})$ at $\theta=0$ is $\mu=\hbox{sign}(ab)$. 

Combining these two observations, we expect that the sign of the mixed second order derivative of $d(l_{\phi},\gamma l_{\theta})$ with respect to both variables at $(0,0)$ should have sign $\mu \cdot \mu'$=$\hbox{sign}(ad)$.
\begin{figure}[h]\label{figurefourc2} \centering
\includegraphics[scale=0.70]{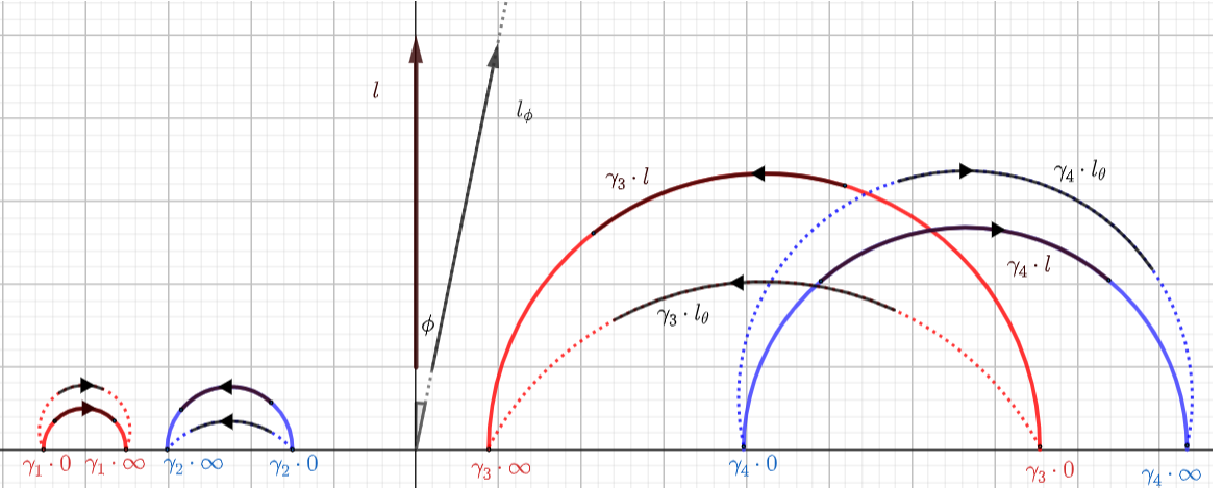}
\caption{Effect of perturbation of $l_1$ and $l_2$ on Figure 1. }
\end{figure}
\section{Spectral Expansion} \label{sesection}
The spectral expansion of the series $A_f^{(0)}(z)$ is a central element in the work of Huber and Chatzakos--Petridis in the hyperbolic-ellptic case, as well as in the work of Lekkas on the hyperbolic-hyperbolic case. In a similar manner, we will make use of the spectral expansion of $A^{(1)}_f(z)$. In particular, we will use the following result.
\begin{lemma}\label{spside} For $f$ a continuous, piecewise differentiable function with exponential decay,
we have the following spectral expansion:\label{a1spectral}
    \begin{align*}A_f^{(1)}(z)=2&\sum_{j}\sqrt{\lambda_j}d^{(1)}_{t_j}(f)\hat{u}_{1,j}u_{0,j}(z) \\ -&\sum_{\mathfrak{a}}\frac{i}{2\pi}\int_{\left(1/2\right)}\sqrt{s(1-s)} \, \cdot d_t^{(1)}(f)\hat{E}_{\mathfrak{a},1}\left(s\right)\overline{E_{\mathfrak{a},0}\left(z,s\right)}ds,\end{align*}
    where
\begin{equation} \label{d1def} d^{(1)}_{t}(f):=\int_{0}^{\pi/2}\frac{\tan^2{v}}{\cos^2{v}} f\left(\frac{1}{\cos^2{v}}\right) \cdot {}_2F_1\left(\frac{s+1}{2},\frac{2-s}{2} \, ; \frac{3}{2} \, ; -\tan^{2}{v} \right)\, dv. \end{equation}
\end{lemma}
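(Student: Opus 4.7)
My plan is to establish the expansion by interpreting $A_f^{(1)}$ as an $L^2(\Gamma\backslash\mathbb{H})$ function and computing its spectral coefficients by unfolding and solving a one-variable ODE for the $u$-averages of the eigenfunctions. First I would note that $A_f^{(1)}$ is $\Gamma$-invariant: for $\gamma'\in\Gamma$ the map $\gamma\Gamma_1\mapsto \gamma(\gamma')^{-1}\Gamma_1$ is a bijection on $\Gamma_1\backslash\Gamma$. Since $f$ is compactly supported and piecewise differentiable, absolute convergence and $L^2$-membership follow from a cusp estimate analogous to \cite[Thm 1.1]{chatzakos} (using $|\tan v|\leq \sec v$ to bound the weight in $A_f^{(1)}$ by that of $A_f^{(0)}$). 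The task then reduces to computing $\langle A_f^{(1)}, u_{0,j}\rangle$ and $\langle A_f^{(1)}, E_{\mathfrak{a},0}(\cdot,s)\rangle$ for $s = 1/2+it$.

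Unfolding against $\Gamma_1\backslash\Gamma$ and passing to Huber coordinates, whose fundamental domain for $\Gamma_1$ is $[0,2\log m]\times(-\pi/2,\pi/2)$ with measure $\sec^2 v\,du\,dv$, yields
$$\langle A_f^{(1)}, u_{0,j}\rangle = \int_{-\pi/2}^{\pi/2} \tan v \cdot f\!\left(\sec^2 v\right)\sec^2 v \cdot \tilde u_j(v)\, dv, \qquad \tilde u_j(v):=\int_0^{2\log m} u_{0,j}(u,v)\,du.$$
The outer weight is odd in $v$, so only the odd part of $\tilde u_j$ contributes. To determine $\tilde u_j$, I would use that in Huber coordinates $D_0 = \cos^2 v\,(\partial_u^2+\partial_v^2)$; integrating $D_0 u_{0,j} = -\lambda_j u_{0,j}$ in $u$ and discarding the $\partial_u$ boundary term (which vanishes by $\Gamma_1$-periodicity) gives
$$\tilde u_j''(v) + \lambda_j \sec^2 v\cdot \tilde u_j(v) = 0.$$
Evaluating formula (\ref{rop}) for $K_0$ at $v=0$ and using $u_{1,j}=(i/\sqrt{\lambda_j})K_0 u_{0,j}$ together with the definitions of $\hat u_{0,j}$, $\hat u_{1,j}$ gives the initial conditions $\tilde u_j(0)=\hat u_{0,j}$ and $\tilde u_j'(0)=\sqrt{\lambda_j}\,\hat u_{1,j}$.

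For the odd part of $\tilde u_j$, make the ansatz $w(v)=\tan v\cdot\psi(\tan^2 v)$ for the unique odd solution with $w'(0)=1$; substitution converts the ODE for $w$ into the hypergeometric equation in $t=\tan^2 v$ with parameters $a=(s_j+1)/2$, $b=(2-s_j)/2$, $c=3/2$ (verified by matching $a+b=3/2$ and $4ab=2+\lambda_j$), and matching initial data at $v=0$ forces $\psi= {}_2F_1\!\left(\tfrac{s_j+1}{2},\tfrac{2-s_j}{2};\tfrac{3}{2};-\tan^2 v\right)$. Substituting back, folding the integral to $[0,\pi/2]$ by parity and recognizing the integrand as the one in (\ref{d1def}) yields $\langle A_f^{(1)},u_{0,j}\rangle = 2\sqrt{\lambda_j}\,\hat u_{1,j}\,d^{(1)}_{t_j}(f)$, matching the discrete spectral coefficient claimed in the lemma.

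For the continuous spectrum I would repeat the argument verbatim with $E_{\mathfrak{a},0}(\cdot,s)$ in place of $u_{0,j}$: the average $\tilde E_\mathfrak{a}(v):=\int_0^{2\log m}E_{\mathfrak{a},0}(u,v,s)\,du$ satisfies the same ODE with $\lambda=s(1-s)$, and the defining relation $E_{\mathfrak{a},1}=(i/\sqrt{s(1-s)})K_0 E_{\mathfrak{a},0}$ gives $\tilde E_\mathfrak{a}'(0)=\sqrt{s(1-s)}\,\hat E_{\mathfrak{a},1}(s)$. The pairing therefore contributes $2\sqrt{s(1-s)}\,\hat E_{\mathfrak{a},1}(s)\,d^{(1)}_t(f)$ to the continuous expansion $\frac{1}{4\pi}\sum_{\mathfrak{a}}\int_{-\infty}^{\infty}(\cdots)\,dt = \frac{1}{2\pi i}\sum_{\mathfrak{a}}\int_{(1/2)}(\cdots)\,ds$, producing the stated contour integral (the factor $-i/(2\pi)$ and the conjugate $\overline{E_{\mathfrak{a},0}(z,s)}$ come out of this rewriting together with the hermitian pairing convention). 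The main obstacle is the explicit identification of the odd hypergeometric solution of the period ODE and the careful matching of normalizations between $w'(0)$, $\hat u_{1,j}$, and $\hat E_{\mathfrak{a},1}(s)$; once that calculation is settled, the rest of the argument is a clean unfolding and bookkeeping exercise.
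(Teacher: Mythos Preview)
Your proposal is correct and follows essentially the same route as the paper: unfold in Huber coordinates, show the $u$-average $\tilde u_j$ solves $\tilde u_j''+\lambda_j\sec^2 v\,\tilde u_j=0$ with $\tilde u_j'(0)=\sqrt{\lambda_j}\,\hat u_{1,j}$, and identify the odd solution with $\tan v\cdot{}_2F_1\big(\tfrac{s+1}{2},\tfrac{2-s}{2};\tfrac32;-\tan^2 v\big)$. The only cosmetic difference is that the paper reaches this hypergeometric via the basis ${}_2F_1\big(s,1-s;1;\tfrac{1\pm i\tan v}{2}\big)$ and the quadratic transformation (\ref{quadtrans}), whereas your direct ansatz $w(v)=\tan v\cdot\psi(\tan^2 v)$ bypasses that identity.
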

\begin{proof} For convenience, we only demonstrate the cocompact case. For the general cofinite case, the only difference is that we have to take into account the contribution of the Eisenstein series, which can be treated in a similar manner.

We have that $$A^{(1)}_f(z)=\sum_{j}c^{(1)}_{j}(f)u_{0,j}(z),$$
where
\begin{align*}c^{(1)}_{j}(f)&=\int_{\Gamma \backslash \mathbb{H}}A^{(1)}_f(z)\overline{u_{0,j}(z)}\, d\mu(z)\\
&=\sum_{\gamma \in \Gamma_1 \backslash \Gamma}\int_{\Gamma \backslash \mathbb{H}}\tan{\left(v\left( \gamma z \right) \right)}f\left( \frac{1}{ \cos^2\left( v\left( \gamma z \right) \right)} \right)\overline{u_{0,j}(z)}\, d\mu(z).  \end{align*}
Using equation (\ref{meshub}), we can rewrite this as
\begin{equation*}
c^{(1)}_{j}(f)=\int_{0}^{\mathrm{len}(l)}\int_{-\pi/2}^{\pi/2}\frac{\tan{v}}{\cos^2{v}}f\left(\frac{1}{\cos^2{v}} \right)\overline{u_{0,j}(ie^{u+iv})} dv \, du.  \end{equation*}
Hence,
\begin{equation*}
c^{(1)}_{j}(f)=\int_{0}^{\pi/2}\frac{\tan{v}}{\cos^2{v}}f\left(\frac{1}{\cos^2{v}} \right)\overline{V_{j}(v)} dv,\end{equation*}
where $$V_{j}(v)=U_{j}(v)-U_{j}(-v),$$
and $U_j(v):=\int_{l}u_{0,j}(z)du$ is a solution of $$ F^{''}+\frac{\lambda_i}{\cos^2{v}}F=0.$$
In other words, $V$ is a solution of the above equation with $F(0)=0$ and $F'(0)=2\sqrt{\lambda_j}\hat{u}_{1,j}$. Here, we used the fact that $U_j'(0)=\sqrt{\lambda_j}\hat{u}_{1,j}$, which follows from equation (\ref{u0tou1}). Similarly with Chatzakos--Petridis \cite[Section~2.2]{chatzakos}, we can write \begin{displaymath}
V_j(v) = a(s){}_2 F_1\left(s,1-s;1; \frac{1 - i \tan(v)}{2}\right) + b(s)  {}_2 F_1\left(s,1-s;1; \frac{1 + i \tan(v)}{2} \right).
\end{displaymath}
The initial conditions imply that 
$a(s) =- b(s)$. Applying equation (\ref{quadtrans}), we have
$$V_{j}(v)= c(s) \cdot  \tan{v} \cdot  {}_2F_1\left( \frac{s+1}{2}, \frac{2-s}{2} \, ; \frac{3}{2} \, ; -\tan^2{v} \right), $$
for some coefficient $c(s)$ that depends only on $s$.
Using the initial conditions once again, we conclude that
$$V_{j}(v)= 2\sqrt{\lambda_j}\hat{u}_{1,i} \cdot  \tan{v} \cdot  {}_2F_1\left( \frac{s+1}{2}, \frac{2-s}{2} \, ; \frac{3}{2} \, ; -\tan^2{v} \right). $$ \end{proof}
\begin{remark}By ${}_{p}F_q$, we denote the (generalized) hypergeometric function - see Appendix \ref{spapp}. 
\end{remark}
The following Lemma provides some useful alternative formulae for the transform
$d^{(1)}_t(f)$.
\begin{lemma} \label{altformulas} For $f$ a continuous, piecewise differentiable function with exponential decay and $s=1/2+it\in\mathbb{C}$, the two following formulas hold:
   \begin{align*} &\textup{(i)}& d^{(1)}_{t}(f)
&=\int_{0}^{+ \infty}x^2 f\left(1+x^2 \right) \cdot {}_2F_1\left( \frac{s+1}{2}, \frac{2-s}{2} \, ; \frac{3}{2} \, ; -x^2 \right)dx,  &\qquad& \\  
&\textup{(ii)}&
 d^{(1)}_{t}(f)&=-\int_{0}^{+ \infty}\frac{x^2}{2} \cdot\left(x f\left(1+x^2 \right)\right)' \cdot  {}_3F_2\left(1, \frac{s+1}{2}, \frac{2-s}{2} \, ; 2, \frac{3}{2} \, ; -x^2 \right) \, dx. &\qquad&
\end{align*}

\end{lemma}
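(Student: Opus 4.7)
The plan is to derive (i) from the definition (\ref{d1def}) by a direct change of variables, and then to obtain (ii) from (i) by a single integration by parts, after recording the appropriate hypergeometric antiderivative.

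For (i), I would substitute $x = \tan v$ in (\ref{d1def}). Then $dx = \sec^2 v \, dv$ gives $dv = \cos^2 v \, dx$ and hence $(\tan^2 v / \cos^2 v)\, dv = x^2 \, dx$; simultaneously $1/\cos^2 v = 1 + x^2$, so $f(1/\cos^2 v) = f(1+x^2)$ and $-\tan^2 v = -x^2$. The interval $v \in (0,\pi/2)$ maps bijectively onto $x \in (0,+\infty)$, and (i) follows immediately.

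For (ii), I would first establish the antiderivative identity
\begin{equation*}
\int_0^x t \cdot {}_2F_1\!\left(\frac{s+1}{2},\frac{2-s}{2};\frac{3}{2};-t^2\right) dt = \frac{x^2}{2} \cdot {}_3F_2\!\left(1,\frac{s+1}{2},\frac{2-s}{2};2,\frac{3}{2};-x^2\right),
\end{equation*}
which is obtained by term-by-term integration of the defining power series of ${}_2F_1$, noting that $\frac{1}{n+1} = \frac{(1)_n}{(2)_n}$. Then in (i) I group the integrand as $[x f(1+x^2)] \cdot [x \cdot {}_2F_1(\cdots;-x^2)]$ and integrate by parts, differentiating the first bracket. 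The antiderivative identity supplies the ${}_3F_2$ appearing in (ii). The boundary term at $0$ vanishes thanks to the factor $x^3$ at the origin, and at $+\infty$ it vanishes because $f$ is assumed to have compact support (or, more generally, exponential decay, which dominates the at-most polynomial growth of ${}_3F_2$ on the negative real axis).

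Since $f$ is only piecewise differentiable, I would carry out the integration by parts separately on each smooth piece of $xf(1+x^2)$; continuity of this function ensures that the interior boundary contributions telescope to zero. No step here is genuinely hard, so the only real bookkeeping is in recognizing $\int_0^x t\,{}_2F_1(\cdots;-t^2)\,dt$ as an elementary multiple of ${}_3F_2$; that will be the one place requiring care.
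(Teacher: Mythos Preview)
Your proposal is correct and follows essentially the same approach as the paper: the substitution $x=\tan v$ for (i), and for (ii) the grouping $[xf(1+x^2)]\cdot[x\,{}_2F_1(\cdots;-x^2)]$ followed by integration by parts. The only cosmetic difference is that the paper identifies the antiderivative $\int_0^x y\,{}_2F_1(\cdots;-y^2)\,dy$ via the substitution $y^2=x^2u$ and the Euler-type transform (\ref{hintrans}), whereas you propose term-by-term integration of the power series; both yield the same ${}_3F_2$ identity (valid for all $x>0$ by analytic continuation off the disc of convergence).
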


\begin{proof}
\,
 \begin{enumerate}[(i)]
            \item Apply the subsitution $x=\tan{v}$ in equation (\ref{d1def}).
            \item We have
\begin{eqnarray}d^{(1)}_{t}(f)&=&\int_{0}^{+ \infty}x^2 f\left(1+x^2 \right) \cdot {}_2F_1\left( \frac{s+1}{2}, \frac{2-s}{2} \, ; \frac{3}{2} \, ; -x^2 \right)dx \nonumber \\
&=&\int_{0}^{+ \infty}\left(x f\left(1+x^2 \right)\right)\left(x \cdot {}_2F_1\left( \frac{s+1}{2}, \frac{2-s}{2} \, ; \frac{3}{2} \, ; -x^2 \right) \right) dx \nonumber \\
&=&-\int_{0}^{+ \infty}\left(x f\left(1+x^2 \right)\right)' \int_{0}^{x}y \cdot {}_2F_1\left( \frac{s+1}{2}, \frac{2-s}{2} \, ; \frac{3}{2} \, ; -y^2 \right)  dy \, dx,  \nonumber \end{eqnarray}
using integration by parts.

Using the substitution $y^2=x^2u$, we rewrite this as
\begin{equation}
d^{(1)}_{t}(f)=-\int_{0}^{+ \infty}\left(x f\left(1+x^2 \right)\right)' \int_{0}^{1}\frac{x^2}{2} \cdot {}_2F_1\left( \frac{s+1}{2}, \frac{2-s}{2} \, ; \frac{3}{2} \, ; -x^2u \right) du \, dx.
\nonumber 
\end{equation}
Finally, we use equation (\ref{hintrans}) for the inner integral, to conclude \begin{eqnarray}d^{(1)}_{t}(f)
&=&-\int_{0}^{+ \infty}\left(x f\left(1+x^2 \right)\right)' \frac{x^2}{2} \cdot {}_3F_2\left(1, \frac{s+1}{2}, \frac{2-s}{2} \, ; 2, \frac{3}{2} \, ; -x^2 \right) \, dx.
\nonumber\end{eqnarray}
    \end{enumerate}
\end{proof}

\section{Modified Relative-Trace Formulae} \label{mrtfsection}
We now introduce modified relative trace formulae related to $\Gamma_1 \backslash \Gamma \slash \Gamma_1$, which will be crucial for our argument.

\begin{theorem}[Modified Relative Trace Formulae] \label{modtrace}Let $f$ be a real, continuous, piecewise differentiable function with exponential decay. Let $\varepsilon$ be equal to $1$ if $\Gamma$ has an element with both diagonal entries being equal to zero, and $0$ otherwise. 
We define
$$f_{(a)}:=f,\quad
 f_{(b)}:=\sqrt{x-1}\cdot f,\quad f_{(c)}:=f+2\sqrt{x-1} \cdot f'.$$
 We further define $g^{+}_0=(1+\varepsilon)f(1)\mathrm{len}(l)$ and $g^{-}_0=(1-\varepsilon)f(1)\mathrm{len}(l)$.
 Then, we have
\begin{align*}
\textup{(a) }& \; \displaystyle &g^{+}_0+\sum_{\gamma \in \Gamma_1 \backslash \Gamma \slash \Gamma_1-\left\{ \mathrm{id} \right\}} g\left(B(\gamma)^2;f_{(a)}\right)&=&\phantom{-}2&\sum_{j} d^{(0)}_{t_j}(f)\hat{u}^2_{0,j}+E^{(a)}(f),\\
\textup{(b) }& \; \displaystyle  &\sum_{\gamma \in \Gamma_1 \backslash \Gamma \slash \Gamma_1: |B(\gamma)|>1}\mathrm{sign}(ac) \cdot g\left(B(\gamma)^2;f_{(b)}\right) &=&-2&\sum_{j}\lambda_j^{1/2} d^{(1)}_{t_j}(f)\hat{u}_{1,j}\hat{u}_{0,j}+E^{(b)}(f),\\
\textup{(c) }& \; \displaystyle &g^{-}_0+\sum_{\gamma \in \Gamma_1 \backslash \Gamma \slash \Gamma_1 - \left\{ \mathrm{id} \right\}} B(\gamma) \cdot g\left(B(\gamma)^2; f_{(c)}\right)&=&\phantom{-}2&\sum_{j} \lambda_jd^{(1)}_{t_j}(f)\hat{u}^2_{1,j}+E^{(c)}(f),
\end{align*}
where $B(\gamma)= ad+bc$, and, for $s=1/2+it$,
\begin{align*}d^{(0)}_{t}(f)&:=\int_{0}^{\pi/2} \frac{1}{\cos^2{v}}f\left(\frac{1}{\cos^2{v}}\right)\cdot {}_2F_1\left(\frac{s}{2},\frac{1-s}{2} \, ; \frac{1}{2} \, ; -\tan^{2}{v} \right)\, dv, \\
d^{(1)}_{t}(f)&:=\int_{0}^{\pi/2}\frac{\tan^2{v}}{\cos^2{v}} f\left(\frac{1}{\cos^2{v}}\right) \cdot {}_2F_1\left(\frac{s+1}{2},\frac{2-s}{2} \, ; \frac{3}{2} \, ; -\tan^{2}{v} \right)\, dv,\end{align*}
and 
\begin{equation} \label{gdeff} g(u;h):=2\int_{ \sqrt{\mathrm{max}(u-1,0)}}^{\infty}  \frac{h\left( x^2+1  \right)}{\sqrt{x^2+1-u}}dx=\int_{ \mathrm{max}(u,1)}^{\infty}  \frac{h\left( t  \right)}{\sqrt{t-u}}\frac{dt}{\sqrt{t-1}}, \end{equation}
and \begin{align*}E^{(a)}(f):=-&\sum_{\mathfrak{a}} \frac{i}{2 \pi}  \int_{\left(1/2\right)} \di  \left|\hat{E}_{\mathfrak{a},0}\left(s \right) \right|^2  
\,ds, \\
E^{(b)}(f):=\phantom{-}&\sum_{\mathfrak{a}} \frac{i}{2 \pi}  \int_{\left(1/2\right)} (s(1-s))^{1/2}\dii  \hat{E}_{\mathfrak{a},1}\left(s \right)  \overline{\hat{E}_{\mathfrak{a},0}\left(s \right) }  
\,ds, \\
E^{(c)}(f):=-&\sum_{\mathfrak{a}} \frac{i}{2 \pi}  \int_{\left(1/2\right)} s(1-s)\dii  \left|\hat{E}_{\mathfrak{a},1}\left(s \right) \right|^2  
\,ds. \end{align*}
\end{theorem}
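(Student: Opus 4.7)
My plan is to derive each of (a)--(c) by integrating an appropriate member of the family $A_f^{(0)}(z),\ A_f^{(1)}(z)$, or a suitable angular directional derivative of one of them, along the closed geodesic $l$, and equating the result two ways. On the spectral side, I use Lemma~\ref{spside} for $A_f^{(1)}$ and its direct analogue for $A_f^{(0)}$ (with $u_{0,j}$ in place of $u_{1,j}$ and the ${}_2F_1(s/2,(1-s)/2;1/2;\,\cdot\,)$ branch replacing ${}_2F_1((s+1)/2,(2-s)/2;3/2;\,\cdot\,)$); taking $l$-periods produces the sums $\sum_j d^{(0)}_{t_j}(f)\hat u_{0,j}^2$ and $\sum_j\sqrt{\l_j}\,d^{(1)}_{t_j}(f)\hat u_{0,j}\hat u_{1,j}$, plus Eisenstein contributions assembling into the $E^{(\cdot)}(f)$. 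On the geometric side, I unfold using the fact that the right $\Gamma_1$-action on $\Gamma_1\backslash\Gamma$ acts on the $l$-parameter $u$ by the shift $u\mapsto u+2\log m$ and preserves $v(\g\,\cdot\,)$; consequently for each nontrivial double coset $[\g_0]\in\Gamma_1\backslash\Gamma/\Gamma_1$ the $l$-integration collapses to $\int_{-\infty}^{\infty}(\cdots)\,du$, while the identity coset (and, when $\e=1$, the antidiagonal coset, for which $\gamma^{-1}\Gamma_1\gamma\cap\Gamma_1$ has finite index) account for the $g_0^{\pm}$ terms.

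\textbf{Parts (a) and (b).} From $\tan v(\g ie^u)=-(ace^u+bde^{-u})$ and $\cos^{-2}v(\g ie^u)=1+(ace^u+bde^{-u})^2$, the change of variable $t=1+(ace^u+bde^{-u})^2$ reduces each nontrivial double-coset integral to one on $[\max(B^2,1),\infty)$. For (a), a case analysis separating $|B(\g_0)|>1$ (where $ac,bd$ have the same sign and $u\mapsto ace^u+bde^{-u}=\pm\sqrt{B^2-1}\cosh\tau$ is two-to-one onto a ray) from $|B(\g_0)|<1$ (where that map is a monotone bijection onto $\R$) shows that the integral equals $g(B(\g_0)^2;f_{(a)})$. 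For (b), the extra weight $\tan v(\g_0 ie^u)$ is odd in the variable $ace^u+bde^{-u}$, so the identity, antidiagonal, and all $|B|\le 1$ contributions vanish; for $|B|>1$ the same substitution together with the cancellation $\sqrt{t-1}/\sqrt{(t-1)(t-B^2)}=(t-B^2)^{-1/2}$ yields precisely $-\mathrm{sign}(ac)\cdot g(B(\g_0)^2;f_{(b)})$, matching (b) once the minus sign in the Eisenstein contribution of Lemma~\ref{spside} is accounted for.

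\textbf{Part (c).} For (c), I differentiate the geodesic integral underlying (b) with respect to the angular parameter $\theta$ rotating $l$, i.e.\ I compute $\partial_\theta|_{\theta=0}\int_l A_f^{(1)}(e^{i\theta}z)\,ds(z)$. Spectrally, this differentiation promotes one $\hat u_{1,j}$ factor to $\sqrt{\l_j}\,\hat u_{1,j}$ via the raising-operator identity behind $u_{1,j}=-2\l_j^{-1/2}\mathrm{Im}(z)\,\partial_z u_{0,j}$, producing the coefficient $\l_j$ on the spectral side. Geometrically, I compute $\partial_\theta v(\g e^{i\theta}ie^u)|_{\theta=0}$ explicitly (since $e^{i\theta}$ is not a M\"obius transformation the chain rule produces several terms, which simplify using $ad-bc=1$), apply the same substitution $t=1+(ace^u+bde^{-u})^2$, and integrate by parts in $t$ to absorb the $f'$ against $\sqrt{t-1}$; the result is
$$B(\g_0)\int_{\max(B^2,1)}^{\infty}\frac{f(t)+2\sqrt{t-1}\,f'(t)}{\sqrt{(t-B^2)(t-1)}}\,dt \;=\; B(\g_0)\cdot g(B(\g_0)^2;f_{(c)}),$$
and the identity/antidiagonal cosets combine into $g_0^-=(1-\e)f(1)\mathrm{len}(l)$ because the antidiagonal coset reverses the orientation of $l$ and therefore enters with a minus sign in the signed sum weighted by $B(\g)=\mathrm{sign}(ad)\cdot|B|$.

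\textbf{Main obstacle.} The principal difficulty is the geometric computation in (c): the explicit $\theta$-derivative of $v(\g e^{i\theta}ie^u)$ requires nontrivial use of $ad-bc=1$, and the subsequent integration by parts in $t$ must reproduce the exact combination $f+2\sqrt{t-1}f'$ with no residual boundary terms (which is ensured by the compact support of $f$). Secondary technicalities include justifying term-by-term $l$-integration of the spectral expansions; this follows from the exponential decay of $d^{(0)}_t(f)$ and $d^{(1)}_t(f)$ in $t$, readable from the hypergeometric asymptotics of Appendix~\ref{spapp}, combined with the standard polynomial bounds on the periods $\hat u_{m,j}$ and $\hat E_{\mathfrak a,m}(s)$ used elsewhere in the paper.
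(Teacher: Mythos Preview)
Your approach is correct and essentially coincides with the paper's: one integrates the spectral expansions of $A_f^{(0)}$ and $A_f^{(1)}$ (Lemma~\ref{spside} and its weight-$0$ analogue) over $l$, differentiates in the angular parameter $\theta$ for part~(c), and on the geometric side unfolds into double cosets and applies the substitution built from $ace^u+bde^{-u}=acy+bd/y$. The one place the paper is cleaner than you anticipate is the $\theta$-derivative in~(c): the closed identity $\tan v(\gamma e^{i\theta}iy)=B(\gamma)\tan\theta-(acy+bd/y)/\cos\theta$ gives $\partial_\theta|_{\theta=0}\tan v=B(\gamma)$ immediately (no special use of $ad-bc=1$ is needed here), and the combination $f_{(c)}$ then drops out of the chain rule applied to $\tan v\cdot f(\sec^2 v)$ without any integration by parts.
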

\begin{remark}
    The proof of (a) for $\Gamma$ cocompact and $\varepsilon=0$ can be found in \cite[\S 3.1]{lekkas}.
\end{remark}
\begin{remark}
    We note that, while $u_{1,j}(z)$ is not necessarily real-valued over $\mathbb{C}$, the periods $\hat{u}_{1,j}$ always are. Indeed, for $(u,v)$ the Huber coordinates described in equation (\ref{coordsys}), we have $$ u_{1,j}(z):=\frac{i}{\sqrt{\lambda_j}}K_0u_{0,j}(z)=\frac{ie^{-iv}}{\sqrt{\lambda_j}}\cos{v}  \left(\frac{\partial}{\partial u}-i\frac{\partial}{\partial v} \right)u_{0,j}(z).$$ 
    On the other hand, $u_{0,j}(z)$ is periodic with respect to the parameter $u$, as it is by definition invariant under the action of $\Gamma$, and $\Gamma$ contains a diagonal element. Hence, using the fact that $v(z)=0$ on the geodesic segment $l$, we have that 
    \begin{equation} \label{u0tou1}\hat{u}_{1,j}:=\int_{l}u_{1,j}(z) ds(z)=\int_{0}^{\text{len}(l)}u_{1,j}(z) \, du =\lambda_j^{-1/2} \int_{0}^{\text{len}(l)}  \frac{\partial}{\partial v} u_{0,j}(z)\, du.\end{equation} As  $u_{0,j}(z)$ is real\hyp{}valued, we conclude that $\hat{u}_{1,j}$ is real as well. Therefore, we do not have to conjugate the second factor in the spectral expansions.
\end{remark}

\begin{remark}
    It is worth noting that, in the case $\varepsilon=1$, the second and third part of the theorem are trivial, as both sides of the equations are identically $0$. We can see this by considering the bijection
    $\gamma \leftrightarrow \gamma'\gamma$, where $\gamma'$ has both diagonal entries equal to zero.
\end{remark}
\begin{remark}
    For $|B(\gamma)|>1$, the quantity $\cosh^{-1}{B(\gamma)}$ is the hyperbolic distance of $l$ from $\gamma \cdot l$. The case $|B(\gamma)| \leq 1$ corresponds to the cases where $l$ and $\gamma \cdot l$ intersect. See for example \cite[Lemma 1]{mckee}.
\end{remark}

For the proof of the modified relative trace formula, it suffices to combine Proposition \ref{geoside}, which deals with the left hand side (\emph{geometric side}) of the trace formulae, with the spectral expansion of $A^{(1)}_f(z)$ (Lemma \ref{spside}), which deals with the right hand side (\emph{spectral side}).
For simplicity, we only demonstrate the case where $\Gamma$ is cocompact and $\varepsilon=0$ (where $\varepsilon$ is as in Theorem \ref{modtrace}). The general case is similar.
\begin{proposition} \label{geoside}
Assume that $\Gamma$ is cocompact and $\varepsilon=0$. For $f$ a continuous, piecewise differentiable function with exponential decay, $I_{f,1}(\theta)$ as defined in equation (\ref{iintdef}), and $f_{(b)}, f_{(c)}$, and $g$ as in Theorem \ref{modtrace}, we have
\begin{alignat*}{5}
 &\hbox{\textup{(i)} } \quad && \qquad \displaystyle
&I_{f,1}(0)\;
&=-&&\sum_{\substack{\gamma \in \Gamma_1 \backslash \Gamma \slash \Gamma_1: \\ \left|B(\gamma)\right|>1}}\mathrm{sign}(ac) \cdot g\left(B(\gamma)^2;f_{(b)}\right), &\qquad  &
 \\
&\hbox{\textup{(ii)} } \quad && \qquad \displaystyle
&I'_{f,1}(0)\;&=&&\sum_{\gamma \in \Gamma_1 \backslash \Gamma \slash \Gamma_1-\left\{ \mathrm{id} \right\}}B(\gamma) \cdot g\left(B(\gamma)^2; f_{(c)}\right) +f(1)\mathrm{len}\left(l\right). & \qquad &
\end{alignat*}
\end{proposition}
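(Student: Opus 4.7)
The plan is to unfold $I_{f,1}(\theta)$ along double cosets of $\Gamma_1\backslash\Gamma/\Gamma_1$, isolate the contribution of the identity class, and then evaluate each remaining term using an explicit Huber-coordinate computation of $v(\delta\cdot ie^{u+i\theta})$. After swapping sum and integral in $A_f^{(1)}$, I would argue that for $\delta\notin\Gamma_1$ the two-sided stabilizer $\Gamma_1\cap\delta^{-1}\Gamma_1\delta$ is trivial (since the hypothesis $\varepsilon=0$ forbids any element of $\Gamma$ swapping the two fixed points of the axis $l$, so only $\Gamma_1$ itself normalises $\Gamma_1$), which means the right $\Gamma_1$-action unfolds $\int_l$ into an integral over the whole positive imaginary axis $I$. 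For the identity class, since $v(e^{i\theta}z)=\theta$ is constant along $l$, the contribution is $\mathrm{len}(l)\tan\theta\cdot f(\sec^2\theta)$, which vanishes at $\theta=0$ (so does not enter (i)) and has $\theta$-derivative $f(1)\,\mathrm{len}(l)$ (the boundary term in (ii)).

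For a non-identity $\delta=\begin{pmatrix}a&b\\c&d\end{pmatrix}$, a direct Möbius computation using $\tan v(w)=-\mathrm{Re}(w)/\mathrm{Im}(w)$ and $ad-bc=1$ yields the closed form
$$\tan V_\delta(u,\theta)=\frac{B(\delta)\sin\theta-\phi(u)}{\cos\theta},\qquad \phi(u):=ace^u+bde^{-u},$$
where $V_\delta(u,\theta):=v(\delta\cdot ie^{u+i\theta})$. In particular $\tan V_\delta(u,0)=-\phi(u)$ and $\partial_\theta\tan V_\delta(u,0)=B(\delta)$, so by chain/product rule the integrand evaluates at $\theta=0$ to $-\phi(u)\,f(1+\phi(u)^2)$ for (i), and its $\theta$-derivative at $0$ is $B(\delta)\bigl[f(1+\phi^2)+2\phi^2 f'(1+\phi^2)\bigr]$ for (ii). The crucial algebraic identity $\phi'(u)^2=\phi(u)^2-4abcd=\phi(u)^2+1-B(\delta)^2$ (using $4abcd=B^2-1$) drives the substitution $t=\phi(u)$, producing $du=dt/\sqrt{t^2+1-B^2}$ on each monotonic branch of $\phi$.

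To conclude, I would carry out a sign/branch analysis determined by $\mathrm{sign}(ac\cdot bd)=\mathrm{sign}(B(\delta)^2-1)$. When $|B(\delta)|>1$, $ac$ and $bd$ share a sign, and $\phi$ is two-to-one onto either $[\sqrt{B^2-1},\infty)$ or $(-\infty,-\sqrt{B^2-1}]$, supplying the factor $2$ in the definition of $g(u;\cdot)$; when $|B(\delta)|\le 1$, $\phi$ is a monotone bijection $\mathbb{R}\to\mathbb{R}$. In (i) the integrand is odd in $t$, so the $|B|\le 1$ classes contribute zero and the $|B|>1$ classes assemble into $-\mathrm{sign}(ac)\,g(B^2;f_{(b)})$ upon recognising $f_{(b)}(1+t^2)=|t|f(1+t^2)$. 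In (ii) the integrand is even in $t$, so both regimes contribute; after the subsequent change $\tau=1+t^2$ the result is $B(\delta)\,g(B(\delta)^2;f_{(c)})$, the $2\phi^2 f'(1+\phi^2)$ piece producing exactly the derivative correction in $f_{(c)}$. The main obstacle is the sign and branch bookkeeping across the four configurations of $(\mathrm{sign}(ac),\mathrm{sign}(bd))$, checking that (i) yields one clean factor of $\mathrm{sign}(ac)$ while (ii) is sign-insensitive, together with verifying that boundary terms at $u=\pm\infty$ vanish from the exponential decay of $f$.
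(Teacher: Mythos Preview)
Your proposal is correct and follows essentially the same route as the paper: unfold over double cosets using $\varepsilon=0$, isolate the identity coset, compute $\tan v(\delta\cdot ie^{u+i\theta})$ explicitly, and then change variables via $t=\phi(u)=ace^u+bde^{-u}$ (the paper uses the equivalent substitution $x=|bd|/y\pm|ac|y$ with $y=e^{u}$). Your packaging via the identity $\phi'(u)^2=\phi(u)^2+1-B(\delta)^2$ and the odd/even-in-$t$ observation is a slightly cleaner way to organise the sign and branch analysis than the paper's case split on $\mathrm{sign}(abcd)$, but the underlying computation is identical.
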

\begin{proof} For $w=e^{i \theta}z$, we have 
        \begin{equation*}I_{f,1}(\theta)=\int_{l}A^{(1)}_f\left(w \right) ds
        =  \sum_{\gamma \in \Gamma_1 \backslash \Gamma} \int_{l} \tan{\left(v\left( \gamma \cdot  w \right)  \right)}f\left( \frac{1}{ \cos^2\left( v\left( \gamma \cdot w \right) \right)} \right) ds. 
 \end{equation*}
 First, we consider the term corresponding to $\gamma=\mathrm{id}$, the identity class in $\Gamma_1 \backslash \Gamma$, separately. 
 We have 
\begin{align*} \int_{l} \tan{\left(v\left( \mathrm{id} \cdot  w \right)  \right)}f\left( \frac{1}{ \cos^2\left( v\left( \mathrm{id} \cdot w \right) \right)} \right) ds&=\int_{l} \tan{\left(\theta  \right)}f\left( \frac{1}{ \cos^2\left( \theta \right)} \right) ds \\
&=\tan{\theta} \cdot f\left( \frac{1}{\cos^2{\theta}}\right)\mathrm{len}(l). \end{align*}
For the rest of the cosets, i.e., $ \Gamma_1 \backslash \Gamma -\left\{ \mathrm{id} \right\}$, we note that, for any given $\gamma \neq \mathrm{id}$, the cosets corresponding to $\gamma \gamma_0$, where $\gamma_0$ runs through the elements of $\Gamma_1$, are disjoint. This follows from the assumption $\varepsilon=0$. Indeed, otherwise, we will have that some $\gamma \in \Gamma - \Gamma_1$ will satisfy $\gamma_0 \gamma= \gamma'_0\gamma$, for some elements $\gamma_0, \gamma'_0 \in \Gamma_1 -\left\{ \mathrm{id} \right\}$. It is easy to check that this happens if any only if $\gamma$ has both diagonal entries equal to $0$, giving, by definition, $\varepsilon=1$. Therefore, we can proceed as follows:
        \begin{align*}
        &  \sum_{\gamma \in \Gamma_1 \backslash \Gamma -\left\{ \mathrm{id} \right\}} \int_{l} \tan{\left(v\left( \gamma \cdot  w \right)  \right)}f\left( \frac{1}{ \cos^2\left( v\left( \gamma \cdot w \right) \right)} \right) ds  \\ &=\sum_{\gamma \in \Gamma_1 \backslash \Gamma \slash \Gamma_1-\left\{ \mathrm{id} \right\}} \sum_{\gamma_0 \in \Gamma_1} \int_{l} \tan{\left(v\left( \gamma \gamma_0 w \right)  \right)}f\left( \frac{1}{ \cos^2\left( v\left( \gamma \gamma_0 w \right) \right)} \right) ds
        \\
        &=\sum_{\gamma \in \Gamma_1 \backslash \Gamma \slash \Gamma_1-\left\{ \mathrm{id} \right\}} \int_{0}^{+\infty} \tan{\left(v\left( \gamma \cdot e^{i \theta} i y \right) \right)}f\left( \frac{1}{ \cos^2\left( v\left( \gamma \cdot e^{i \theta} i y \right) \right)} \right) \frac{dy}{y}.
 \end{align*}

We denote the summand by \begin{equation}I(\theta; \, \gamma):=\int_{0}^{+\infty} \tan{\left(v\left( \gamma \cdot e^{i \theta} i y \right) \right)}f\left( \frac{1}{ \cos^2\left( v\left( \gamma \cdot e^{i \theta} i y \right) \right)} \right) \frac{dy}{y}, \label{termstodiff} \end{equation}
so that 
\begin{equation} \label{bandc} I_{f,1}(\theta)=\tan{\theta} \cdot f\left( \frac{1}{\cos^2{\theta}}\right) \cdot \mathrm{len}(l)+\sum_{\gamma \in \Gamma_1 \backslash \Gamma \slash \Gamma_1-\left\{ \mathrm{id} \right\}} I(\theta; \, \gamma). \end{equation}
For (ii) we compute the derivative of equation (\ref{bandc}) at $\theta=0$.
In particular, $$I'_{f,1}(0)=f\left( 1\right) \cdot \mathrm{len}(l)+\sum_{\gamma \in \Gamma_1 \backslash \Gamma \slash \Gamma_1-\left\{ \mathrm{id} \right\}} I'(0; \, \gamma).$$
We note that
\begin{equation*}  \tan\left( v(\gamma e^{i \theta}iy) \right)=B(\gamma)\tan{\theta}-\frac{acy+bd/y}{\cos{\theta}}, \end{equation*}
and, hence,
\begin{equation}
\left. \frac{\partial}{\partial \theta}  \tan\left( v(\gamma e^{i \theta}iy) \right) \right \vert_{\theta=0}=B(\gamma). \label{tanest} \end{equation}
Differentiating (\ref{termstodiff}) and using equation (\ref{tanest}), we arrive at
$$I'(0; \, \gamma)=B(\gamma)\int_{0}^{\infty}  f\left( \left( \frac{bd}{y}+acy \right)^2+1  \right)+2\left( \frac{bd}{y}+acy \right)^2f'\left( \left( \frac{bd}{y}+acy \right)^2+1  \right) \frac{dy}{y}.$$
For the case of double cosets with $adbc>0$, we use the substitution $x=|bd|/y+|ac|y$. This gives
$$dx = \left(-|bd|/y^2+|ac| \right) dy,$$
i.e.,
$$ \frac{dy}{y}=\pm \frac{dx}{\sqrt{x^2-4abcd}},$$
where the sign is positive for $y>\sqrt{\left|bd/ac\right|}$ and negative otherwise. Therefore, we have
\begin{equation*}I'(0; \,\gamma)=
B(\gamma) \cdot \int_{ \sqrt{M(\gamma)}}^{\infty} 2\left(x f\left( x^2+1  \right)\right)' \frac{dx}{\sqrt{x^2-\left(B(\gamma)^2-1 \right)}}, 
\end{equation*}
where $M(\gamma)=\hbox{max}\left( B(\gamma)^2-1,0\right)$.
For $adbc<0$, we take $x=-|bd|/y+|ac|y$. This gives
\begin{align*} I'(0; \,\gamma)
&=B(\gamma) \cdot \int_{ -\infty}^{\infty} \left(x f\left( x^2+1  \right)\right)' \frac{dx}{\sqrt{x^2-\left(B(\gamma)^2-1 \right)}}  \\
&=B(\gamma) \cdot\int_{0}^{\infty} 2\left(x f\left( x^2+1  \right)\right)' \frac{dx}{\sqrt{x^2-\left(B(\gamma)^2-1 \right)}}.
\end{align*}
Hence,
\begin{equation*}
I'_{f,1}(0)
=2 \! \! \! \! \! \sum_{\gamma \in \Gamma_1 \backslash \Gamma \slash \Gamma_1-\left\{ \mathrm{id} \right\}} \! \! \! \! \! B(\gamma) \int_{ \sqrt{M(\gamma)}}^{\infty} \left(x f\left( x^2+1  \right)\right)' \frac{dx}{\sqrt{x^2-\left(B(\gamma)^2-1 \right)}}+f(1)\mathrm{len}(l).
\end{equation*}
For (i), we plug $\theta=0$ in equation (\ref{bandc}) and proceed in a similar fashion.
\begin{align*}
I_{f,1}(0)&=-\sum_{\gamma \in \Gamma_1 \backslash \Gamma \slash \Gamma_1}\int_{0}^{\infty} \left( \frac{bd}{y}+acy \right) f\left( \left( \frac{bd}{y}+acy \right)^2+1  \right) \frac{dy}{y} \\ 
&=-2 \sum_{\gamma \in \Gamma_1 \backslash \Gamma \slash \Gamma_1: |B(\gamma)|>1}\mathrm{sign}(ac) \int_{ \sqrt{M(\gamma)}}^{\infty} x f\left( x^2+1  \right) \frac{dx}{\sqrt{x^2-\left(B(\gamma)^2-1 \right)}}.
\end{align*}
\end{proof}

 To finish the proof of Theorem \ref{modtrace}, we substitute the spectral expansion of $A^{(1)}_f$ (Lemma \ref{a1spectral}) in the definition of $I_{f,1}(\theta)$, to get:
$$ I_{f,1}(\theta)=\int_{l}A^{(1)}_f\left(w \right) ds =2\sum_{j}\sqrt{\lambda_j}d^{(1)}_{t_j}(f)\hat{u}_{1,j}\int_{l}u_{0,j}(w) ds.$$
This gives
\begin{equation} \label{if1} I_{f,1}(0)=2\sum_{j}\sqrt{\lambda_j}d^{(1)}_{t_j}(f)\hat{u}_{1,j}\hat{u}_{0,j}, \end{equation}
and, using equation (\ref{u0tou1}),
\begin{equation} \label{if2} I'_{f,1}(0)=-2\sum_{j}\lambda_jd^{(1)}_{t_j}(f)\hat{u}^2_{1,j}. \end{equation}
Theorem \ref{modtrace} now follows by combining Proposition \ref{geoside} with equations (\ref{if1}) and (\ref{if2}).

\section{Choice for Test Functions} \label{choicesection}
In light of Theorem \ref{modtrace}, we want to choose $f_1$, $f_3$, $f_4$ so that
\begin{eqnarray}
& &g(u,f_1) \sim \mathbbm{1}_{[0,X^2]}(u), \nonumber \\
& &g(u,\sqrt{x-1} \cdot f_3) \sim \mathbbm{1}_{[0,X^2]}(u), \nonumber \\
& &g\left(u,f_4+2\sqrt{x-1} \cdot f'_4\right) \sim \mathbbm{1}_{[0,X^2]}(u)\cdot \frac{1}{\sqrt{u}}, \nonumber
\end{eqnarray}
where, for $A \subset \mathbb{R}$,  $\mathbbm{1}_{A}(u)$ denotes the indicator function of the set $A$.

 As in the work of Lekkas \cite{lekkas}, a valid choice for $f_1$ is the following:
 \begin{equation*} f_1(x) = \left\{
\begin{array}{ll}
      \displaystyle 2 \pi H^{-1} \sqrt{x-1} \cdot  \left( \sqrt{R^2-x}-\sqrt{r^2-x} \right), & 1\leq x \leq r^2, \\
      \displaystyle 2 \pi H^{-1}\sqrt{x-1} \cdot \sqrt{R^2-x}, & r^2 \leq x \leq R^2,\\
      \displaystyle 0, & R^2 \leq x, \\
\end{array} 
\right. \end{equation*}
where \begin{equation} \label{rrdef} R^2=(X+Y)^2-1, \quad r^2=X^2-1, \quad H=R^2-r^2, \end{equation}
and $Y=D\cdot X$, where $0<D<1$ is independent of $X$.
We note that $f_1(x)/\sqrt{x-1}$ still satisfies the conditions of Theorem \ref{modtrace}, and, therefore, we can take \begin{equation} \label{f3def} 
f_3=f_1/\sqrt{x-1}.  \end{equation} We are left to find an appropriate function $f_4$. For technical reasons, we consider $1/\sqrt{u-1}$ instead of $1/\sqrt{u}$. Fortunately, this does not affect the result. Indeed, for large $z$, we have
$$\frac{z}{\sqrt{z^2-1}}=1+ \frac{z-\sqrt{z^2-1}}{\sqrt{z^2-1}}=1+ \frac{1}{\left( z+\sqrt{z^2-1} \right) \cdot \sqrt{z^2-1} } =1 + O\left( z^{-2} \right),$$
and, therefore, we have
$$\sum_{\substack{\gamma \in \Gamma_1 \backslash \Gamma \slash \Gamma_1: \\ 1<\left|B(\gamma)\right|<X}}\frac{B\left(\gamma \right)}{\sqrt{B\left(\gamma \right)^2-1}}=\sum_{\substack{\gamma \in \Gamma_1 \backslash \Gamma \slash \Gamma_1: \\ 1<\left|B(\gamma)\right|<X}}\hbox{sign}\left( B(\gamma) \right)+ O \left( \sum_{\substack{\gamma \in \Gamma_1 \backslash \Gamma \slash \Gamma_1: \\ \left|B(\gamma)\right|<X}} \frac{1}{B^2(\gamma)} \right).$$
By \cite[Lemma 20]{tsuzukiletter},
$$\sum_{\substack{\gamma \in \Gamma_1 \backslash \Gamma \slash \Gamma_1: \\ \left|B(\gamma)\right|<X}} \frac{1}{B^2(\gamma)}= O(1).$$
Hence,
\begin{equation}\label{approx1}\sum_{\substack{\gamma \in \Gamma_1 \backslash \Gamma \slash \Gamma_1: \\ 1<\left|B(\gamma)\right|<X}}\frac{B\left(\gamma \right)}{\sqrt{B\left(\gamma \right)^2-1}}=\sum_{\substack{\gamma \in \Gamma_1 \backslash \Gamma \slash \Gamma_1: \\ 1<\left|B(\gamma)\right|<X}}\hbox{sign}\left( B(\gamma) \right)+ O \left( 1\right).\end{equation}
We will choose $f_4$ so that:
$$ g\left(u,f_4+2\sqrt{x-1} \cdot f'_4\right) = \left\{
\begin{array}{ll}
      \displaystyle au+b, & 1\leq u \leq 3, \\
     \displaystyle  \frac{1}{\sqrt{u-1}}, & 3 \leq u \leq X^2,\\
     \displaystyle  \frac{M}{\sqrt{u-1}}-B, & X^2 \leq u \leq (X+Y)^2,\\
     \displaystyle 0, & (X+Y)^2 \leq u, \\
\end{array} 
\right. $$
where $a$ will be determined later and $b,M,B$ are chosen to ensure continuity, as follows:
$$b=\frac{1}{\sqrt{2}}-3a, \quad M=
\frac{R}{R-r}, \quad  B=
\frac{1}{R-r}. $$
In particular, we note that $g$ is continuous, with compact support, and approximates $\mathbbm{1}_{[0,X^2]}(u)\cdot u^{-1/2}$, as required.

Note that, by \cite[Eq.1.64]{iwaniec}, we have that, for $t>1$
and $g(u; \, h)$ as in equation (\ref{gdeff}),
\begin{equation}
    h(t)/\sqrt{t-1}=-\frac{1}{\pi}\int_{t}^{+\infty} \frac{g'(u; \,h)}{\sqrt{u-t}} du .\label{invgtransf}
\end{equation}
From equation (\ref{invgtransf}), we have
\begin{align*} &\displaystyle \frac{f_4(v)+2\sqrt{v-1} \cdot f'_4(v)}{\sqrt{v-1}}=\nonumber \\ & \qquad \qquad
\left\{
\begin{array}{ll}
I_{X^2}(v)+M\left(I_{(X+Y)^2}(v)-I_{X^2}(v) \right)-I_{3}(v)-2a\pi^{-1}\sqrt{3-v}, & 1<v \leq 3, \\
      I_{X^2}(v)+M\left(I_{(X+Y)^2}(v)-I_{X^2}(v) \right), & 3<v \leq X^2, \\
     M\cdot I_{(X+Y)^2}(v), & X^2< v <(X+Y)^2, \\
     0, & v>(X+Y)^2,
\end{array} 
\right. \nonumber \end{align*}
where 
$$I_{r}(v)=\frac{1}{2\pi} \int_{v}^{r} \frac{1}{\sqrt{(u-v)(u-1)^3}} \, du.$$
By direct integration, we get
$$I_{r}(v)=\frac{1}{\pi \sqrt{r-1}}\frac{\sqrt{r-v}}{v-1}.$$

We deduce
\begin{equation} \label{f4def} \left(xf_4(x^2+1)\right)'= \left\{
\begin{array}{ll}
\frac{1}{\pi x(R-r)}\left( \sqrt{R^2-x^2}-\sqrt{r^2-x^2}\right)-\frac{\sqrt{2}+4ax^2}{2\pi x} \sqrt{2-x^2},& 0<x \leq \sqrt{2}, \\
      \frac{1}{\pi x(R-r) }\left( \sqrt{R^2-x^2}-\sqrt{r^2-x^2}\right),& \sqrt{2}<x \leq r, \\
     \frac{1}{\pi x(R-r) }\sqrt{R^2-x^2}, & r< x <R, \\
     0, & x>R.
\end{array}  \right. \end{equation}
For $f_4$ to be well defined and satisfy our conditions, we want to choose $a$ so that the right-hand side has an antiderivative that is $0$ at both $x=0$ and $x \geq R$. Up to translation, we can assume that the former is true independently of $a$. Hence, we want to choose $a$ so that
$$\int_{0}^{R}\left(xf_4(x^2+1)\right)' dx=0,$$

i.e., $$\frac{2a}{\pi} \int_{1}^{3} \sqrt{3-v} \, dv = \lim_{\varepsilon \rightarrow 0} \left(\frac{\left(I(R,\epsilon)-I(r,\epsilon)\right)}{\pi(R-r)} - \frac{\sqrt{2}}{2\pi}I(\sqrt{2},\epsilon)\right),$$
where, for some constant $k$, \begin{eqnarray}I(c,\epsilon):=\int_{1+\epsilon}^{c^2+1}\frac{\sqrt{c^2+1-v}}{v-1} dv&=&c\int_{\epsilon/c^2}^{1}\frac{\sqrt{1-t}}{t} dt\nonumber \\
&=&2c\tanh^{-1}\left(\sqrt{1-\frac{\epsilon}{c^2}}\right)-2c\sqrt{1-\frac{\epsilon}{c^2}}\nonumber \\
&=& kc-c\log{\frac{\epsilon}{c^2}}+O(\epsilon)=kc+2c\log{c}-c\log{\epsilon}+O(\epsilon). \nonumber \end{eqnarray}
Hence,
\begin{eqnarray}a &=& \frac{3 \sqrt{2}}{16}\lim_{\varepsilon \rightarrow 0} \left(\frac{kR+2R\log{R}-R\log{\epsilon}-kr-2r\log{r}+r\log{\epsilon}}{R-r} - \left(k+\log{2}-\log{\epsilon}\right)\right) \nonumber \\
&=& \frac{3\sqrt{2}}{8}\left( \frac{R\log{R}-r\log{r}}{R-r}-\frac{\log{2}}{2}\right).  \label{aldef} \end{eqnarray}
We now verify that our choice of test functions gives the required counting function.
By Theorem \ref{lekkasmain}, we have that, for any $i \in \left\{1,2,3,4\right\}$,
\begin{equation} \label{shorteq} N_i(X+Y)-N_i(X) \ll N_1(X+Y)-N_1(X)=O(X^{2/3}+Y). \end{equation}
Using equation (\ref{shorteq}), equation (\ref{approx1}), and the definition of $f_4$, we reach the estimate \begin{equation*}
I'_{f_4,1}(0) =\sum_{\substack{\gamma \in \Gamma_1 \backslash \Gamma \slash \Gamma_1: \\ 1<\left|B(\gamma)\right|<X}}\mathrm{sign}(ad) + O \left( Y+X^{2/3} \right),\end{equation*}
where $I_{f,1}$ is as in equation (\ref{iintdef}). To summarize, we have:
\begin{eqnarray} 
I_{f_3,1}(0)&=&-\sum_{\gamma: |B(\gamma)|<X} \mathrm{sign}(ac) + O\left(Y+X^{2/3}\right), \nonumber
\\
I'_{f_4,1}(0)&=& \phantom{-}\sum_{\gamma: |B(\gamma)|<X} \mathrm{sign}(ad) + O\left(Y+X^{2/3}\right). 
\label{countingsub}  \end{eqnarray}
\section{Estimates for the Spectral Coefficients} \label{estsection}
In this section, we provide estimates for the transforms $d_{t}^{(1)}(f_3)$ and $d_{t}^{(1)}(f_4)$. In Section \ref{mainthmsection}, we will use them to conclude Theorem \ref{mainthm} from Theorem \ref{modtrace}. We start by writing the transforms in terms of generalized hypergeometric functions $J_s$ and $K_s$. Then, we make use of the known results on hypergeometric functions, which we provide in Appendix \ref{spapp}.

Applying Lemma \ref{altformulas}(i) with $f=f_3$ and Lemma \ref{altformulas}(ii) with $f=f_4$, where $f_3,f_4$ are as in equations (\ref{f3def}) and (\ref{f4def}) respectively,
we have
\begin{eqnarray}
    d^{(1)}_{t}(f_3)&=&\frac{J_s(R^2)-J_s(r^2)}{R^2-r^2}, \label{mvt2} \\
    d^{(1)}_{t}(f_4)
&=&\frac{K_s(R)-K_s(r)}{R-r}-\frac{1}{\sqrt{2}}K_s(\sqrt{2})+K_s(R,r), \label{mvt3}
\end{eqnarray}

where 
\begin{eqnarray}
J_s(u)&:=&\frac{2}{\pi}\int_{0}^{\sqrt{u}} x^2\sqrt{u-x^2} \cdot {}_2F_1\left( \frac{s+1}{2}, \frac{2-s}{2} \, ; \frac{3}{2} \, ; -x^2 \right) dx,  \label{mvtforhalf1}  \\
K_s(u)&:=&-\frac{1}{2\pi  }\int_{0}^{u} x\sqrt{u^2-x^2} \cdot {}_3F_2\left(1, \frac{s+1}{2}, \frac{2-s}{2} \, ; 2, \frac{3}{2} \, ; -x^2 \right) \, dx, \label{mvtforhalf2}
\end{eqnarray}
and
\begin{eqnarray}K_s(R,r)&:=&\frac{a}{\pi}\int_{0}^{\sqrt{2}} x^3 \sqrt{2-x^2} \cdot {}_3F_2\left(1, \frac{s+1}{2}, \frac{2-s}{2} \, ; 2, \frac{3}{2} \, ; -x^2 \right) \, dx.  \nonumber 
\end{eqnarray}
We will now evaluate these integrals using the integral hypergeometric transformation given by equation (\ref{hintrans}). For this end, we first apply the substitution $x^2=vu$ in the case of $J_s(u)$, the substitution $x^2=vu^2$ for $K_s(u)$,  and the substitution $x^2=2v$ for $K_s(R,r)$. We then apply equation (\ref{hintrans}) in the following manner:
\begin{eqnarray}J_s(u) &=& \frac{u^2}{\pi}\int_{0}^{1} \sqrt{(1-v)v} \cdot {}_2F_1\left( \frac{s+1}{2}, \frac{2-s}{2} \, ; \frac{3}{2} \, ; -uv \right) dv \nonumber \\
&=& \frac{\Gamma(3/2)^2}{\pi\Gamma(3)}u^2 \cdot {}_2F_1\left( \frac{s+1}{2}, \frac{2-s}{2} \, ; 3 \, ; -u \right) \nonumber \\
&=& \frac{ u^2 }{8} \cdot {}_2F_1\left( \frac{s+1}{2}, \frac{2-s}{2} \, ; 3 \, ; -u \right), \nonumber
\nonumber
\end{eqnarray}
\begin{eqnarray}
    K_s(u)&=& \frac{-u^3}{4\pi  }\int_{0}^{1} \sqrt{1-v}  \cdot {}_3F_2\left(1, \frac{s+1}{2}, \frac{2-s}{2} \, ; 2, \frac{3}{2} \, ; -u^2v \right) \, dv \nonumber \\
&=& 
\frac{-u^3}{6\pi  } \cdot {}_4F_3\left(1,1, \frac{s+1}{2}, \frac{2-s}{2} \, ; \frac{5}{2},2, \frac{3}{2} \, ; -u^2 \right), \nonumber
\end{eqnarray}
\begin{eqnarray}K_s(R,r)
&=& \frac{2a \sqrt{2}}{\pi}\int_{0}^{1} v \sqrt{1-v} \cdot {}_3F_2\left(1, \frac{s+1}{2}, \frac{2-s}{2} \, ; 2, \frac{3}{2} \, ; -2v \right) \, du \nonumber \\
&=& \frac{8a \sqrt{2}}{15\pi} \cdot {}_3F_2\left(1, \frac{s+1}{2}, \frac{2-s}{2} \, ;\frac{7}{2}, \frac{3}{2} \, ; -2 \right). \nonumber
\end{eqnarray}

We now express the special functions $J_s(u)$, $K_s(u)$ in terms of generalized hypergeometric functions with variables close to $0$, rather than $\infty$, as more tools are available in this neighbourhood (see Appendix \ref{spapp}). We will do this by applying equation (\ref{hgexp}). This can be done directly for the case of $J_s(u)$. On the other hand, $K_s(u)$ contains a degenerate case of the transform so we use a limiting argument.
\begin{lemma} \label{intozlem}  \, For $u>1$ and $s \in  (1/2,1)$ or $s=1/2+it$, where $t$ is a non\hyp{}zero real number, we have
    \begin{alignat*}{4}
        &\textup{(a)}& \quad  & \displaystyle J_s(u)\;&=& \; \; \displaystyle \gamma_J(s)G_{J}(u,s)u^{(3-s)/2}+\gamma_J(1-s)G_{J}(u,1-s)u^{1+s/2},&
 \\
&\textup{(b)}& \quad  &\displaystyle K_s(u)\;&=& -\gamma_K(s) G_{K}(u,s)u^{2-s}-\gamma_K(1-s) G_{K}(u,1-s)u^{s+1}-\frac{u\log{u} }{\pi s(1-s)}+C(s)u+O(u^{-1}t^{-4})&,
 \end{alignat*}
where
$C(s)$ is independent of $u$,
    \begin{flalign*}
\gamma_J(s):=&\frac{ \Gamma(1/2-s)}{4\Gamma(1-s/2)\Gamma(5/2-s/2)}, \\
\gamma_K(s):=&\frac{\left(\Gamma\left((1-s)/2\right)\right)^2\Gamma\left(1/2-s\right)}{16\left(\Gamma\left(1-s/2\right)\right)^2 \Gamma\left((3-s)/2\right) \Gamma\left(2-s/2\right)},\end{flalign*}
and 
\begin{flalign}
G_{J}(u,s):=&{}_2F_1\left( \frac{s+1}{2}, \frac{s-3}{2} \, ; s+\frac{1}{2} \, ; -u^{-1} \right), \label{gjdef} \\
G_K(u,s):=&
{}_3F_2\left(\frac{s}{2},\frac{s}{2}-1,\frac{s}{2}-\frac{1}{2};\frac{s}{2}+\frac{1}{2},s+\frac{1}{2};-u^{-2}\right). \label{gkdef} \end{flalign}

\end{lemma}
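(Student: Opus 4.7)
My plan is to apply the standard linear transformation of generalized hypergeometric functions at infinity (equation (\ref{hgexp}) in Appendix \ref{spapp}) to the closed-form expressions
\begin{align*}
J_s(u) &= \tfrac{u^2}{8}\cdot {}_2F_1\!\left(\tfrac{s+1}{2},\tfrac{2-s}{2};3;-u\right), \\
K_s(u) &= -\tfrac{u^3}{6\pi}\cdot {}_4F_3\!\left(1,1,\tfrac{s+1}{2},\tfrac{2-s}{2};\tfrac{5}{2},2,\tfrac{3}{2};-u^2\right)
\end{align*}
already obtained above, and then simplify the resulting Gamma ratios to match $\gamma_J$, $\gamma_K$, $G_J$, $G_K$.

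For part (a), the two upper parameters of the ${}_2F_1$ differ by $s-\tfrac12$, which is non-integer under the hypothesis on $s$, so (\ref{hgexp}) applies directly and produces two terms carrying prefactors $u^{-(s+1)/2}$ and $u^{-(2-s)/2}$. Multiplying by $u^2/8$ yields the stated exponents $u^{(3-s)/2}$ and $u^{1+s/2}$. The coefficients $\gamma_J(s)$ and $\gamma_J(1-s)$ come from a direct simplification of the Gamma ratios produced by the transformation (using $(5-s)/2 = 5/2 - s/2$), and the residual hypergeometric factors evaluated at $-u^{-1}$ are identified with $G_J(u,s)$ and $G_J(u,1-s)$ by matching indices.

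For part (b), the situation is degenerate: two upper parameters of the ${}_4F_3$ are both equal to $1$, so (\ref{hgexp}) is not directly applicable. I would resolve this by a limiting argument: perturb one of the $1$'s to $1+\varepsilon$, apply the non-degenerate formula, and let $\varepsilon\to 0$. The connection formula produces four terms whose exponents in $u$ are $u^{2-s}$, $u^{s+1}$, $u$, and $u^{1-2\varepsilon}$. The first two are non-degenerate, and after simplifying their Gamma-ratio prefactors and identifying the surviving ${}_3F_2$ with $G_K$ (using the denominator indices $5/2, 2, 3/2$ of the original ${}_4F_3$), they contribute $-\gamma_K(s)G_K(u,s)u^{2-s}$ and $-\gamma_K(1-s)G_K(u,1-s)u^{s+1}$. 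The last two terms each carry a factor of $\Gamma(\pm\varepsilon)$ and hence a simple pole in $\varepsilon$, but their singular parts cancel. Expanding $u^{-2\varepsilon}=1-2\varepsilon\log u+O(\varepsilon^2)$ in the divergent combination yields both a $u\log u$ contribution and a finite linear-in-$u$ remainder; the $u\log u$ coefficient I would compute explicitly and verify to equal $-1/(\pi s(1-s))$, while the constant one is absorbed into $C(s)$.

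The main obstacle is this limiting computation in part (b): organizing the Laurent expansion at $\varepsilon=0$ so that the poles from $\Gamma(\pm\varepsilon)$ cancel cleanly, and checking that the $u\log u$ coefficient emerges in the precise form $-1/(\pi s(1-s))$ once the residues of the surrounding Gamma factors are accounted for. Once the leading terms are in place, the error $O(u^{-1}t^{-4})$ is obtained by truncating the absolutely convergent series defining $G_J$ and $G_K$ one step past the leading constant and bounding the next Gamma-function coefficients via Stirling at $s=1/2+it$; the four powers of $t$ match the decay of the $\Gamma$-ratios in $\gamma_K(s)$ together with the next ${}_3F_2$ coefficient in $G_K$.
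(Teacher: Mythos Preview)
Your overall plan coincides with the paper's: part (a) is a direct application of the connection formula (\ref{hgexp}), and for part (b) you correctly diagnose the degeneracy (two equal upper parameters) and resolve it by perturbing $1\mapsto 1+\varepsilon$, applying (\ref{hgexp}), and passing to the limit. The identification of the two non-degenerate terms with $-\gamma_K(s)G_K(u,s)u^{2-s}$ and $-\gamma_K(1-s)G_K(u,1-s)u^{s+1}$, and the mechanism by which the poles at $\varepsilon=0$ of the remaining pair cancel and leave behind $-\tfrac{u\log u}{\pi s(1-s)}+C(s)u$, are exactly what the paper does.

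There is one genuine inaccuracy, however: your explanation of the $O(u^{-1}t^{-4})$ term is wrong. The $G_K$ factors in the statement of the lemma are \emph{not} truncated; they appear in full, so there is nothing to ``truncate one step past the leading constant.'' (Indeed, if you tried to absorb the $n=1$ term of $G_K$ into an error, you would get something of size $\gamma_K(s)\cdot t\cdot u^{-1/2}\asymp t^{-5/2}u^{-1/2}$, not $t^{-4}u^{-1}$.) The error term arises entirely from the degenerate pair: after factoring out the pole, the $\varepsilon$-derivative at $\varepsilon=0$ of the residual ${}_3F_2\big(\varepsilon-\tfrac12,\varepsilon,\varepsilon+\tfrac12;\,1+\varepsilon+\tfrac{1-s}{2},1+\varepsilon+\tfrac{s}{2};-u^{-2}\big)$ contributes $O(u^{-2}t^{-2})$ (the series has an upper parameter equal to $\varepsilon$, so only its $\varepsilon$-derivative survives, and the first nonconstant coefficient is $\asymp t^{-2}u^{-2}$). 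Multiplying by the prefactor $u/(2\pi s(1-s))\asymp u\,t^{-2}$ yields the claimed $O(u^{-1}t^{-4})$. Adjust your write-up accordingly.
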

\begin{proof} \,
For (a), we apply equation (\ref{hgexp}) directly. \\
For (b), we first write $K_s(u)$ as 

        \begin{eqnarray}
K_s(u) = \frac{-u^3}{6\pi  }\lim_{\epsilon \rightarrow 0}{}_4F_3\left(1+\epsilon,1, \frac{s+1}{2}, \frac{2-s}{2} \, ; \frac{5}{2},2, \frac{3}{2} \, ; -u^2 \right) \nonumber.
\end{eqnarray}
Applying equation (\ref{hgexp}) to the perturbed hypergeometric function, we have
        \begin{eqnarray}
& K_s(u)& \nonumber \\
& \qquad =& -u^{2-s}\gamma_K(s) G_{K}(u,s)-u^{s+1}\gamma_K(1-s) G_{K}(u,1-s)+\frac{u }{2\pi s(1-s)}\lim_{\epsilon \rightarrow 0}\left(\frac{F(s,u,\epsilon)-1}{\epsilon}\right)
\nonumber \\
& \qquad =& -u^{2-s}\gamma_K(s) G_{K}(u,s)-u^{s+1}\gamma_K(1-s) G_{K}(u,1-s)+\frac{u }{2\pi s(1-s)}\frac{\partial F}{\partial \epsilon}(s,u,0),\nonumber \end{eqnarray}
where
\begin{equation} F(s,u,\epsilon):=\frac{u^{-2\epsilon} \cdot \Gamma\left(\frac{s-1}{2}-\epsilon\right)\Gamma\left(\frac{-s}{2}-\epsilon\right)\Gamma(\frac{1}{2})\Gamma(\frac{3}{2})}{\Gamma\left(\frac{s-1}{2}\right)\Gamma\left(\frac{-s}{2}\right)\Gamma(\frac{3}{2}-\epsilon)\Gamma(\frac{1}{2}-\epsilon)} {}_3 F_2 \left(
\begin{matrix}
\epsilon-\frac{1}{2},\epsilon, \epsilon+\frac{1}{2}
\\1+\epsilon+\frac{1-s}{2},1+\epsilon+\frac{s}{2} \end{matrix} 
;-u^{-2}
\right) . \nonumber 
\end{equation}
We now note that, by the series expansion of ${}_3F_2$ (see equation (\ref{hgseriesdef})), we have that
$$\left. \frac{\partial}{\partial \epsilon} 
{}_3 F_2 \left(
\begin{matrix}
\epsilon-\frac{1}{2},\epsilon, \epsilon+\frac{1}{2}
\\1+\epsilon+\frac{1-s}{2},1+\epsilon+\frac{s}{2} \end{matrix} 
;-u^{-2}
\right) 
\right \vert_{\epsilon=0} \ll u^{-2}t^{-2}. $$
Hence, we have
\begin{equation*}
    \frac{\partial F}{\partial \varepsilon}(s,u,0) = -2\log{u}-\psi\left(\frac{-s}{2}\right)-\psi\left(\frac{s-1}{2}\right)+\psi\left(\frac{3}{2}\right)+\psi\left(\frac{1}{2}\right)+O(u^{-2}t^{-2}),
\end{equation*}
where $\psi:=\Gamma'/\Gamma$ is the digamma function. Therefore,
\begin{equation} K_s(u)=-u^{2-s}\gamma_K(s) G_{K}(u,s)-u^{s+1}\gamma_K(1-s) G_{K}(u,1-s)-\frac{u\log{u} }{\pi s(1-s)}+C(s)u+O(u^{-1}t^{-4}),
\nonumber \end{equation}
where
$$C(s)=\frac{1}{2\pi s(1-s)} \left(-\psi\left(\frac{-s}{2}\right)-\psi\left(\frac{s-1}{2}\right)+\psi\left(\frac{3}{2}\right)+\psi\left(\frac{1}{2}\right) \right).$$

\end{proof}

Furthermore, using equation (\ref{hgexp}),  we prove the following lemma, establishing asymptotics for $K_s(R,r)$.
\begin{lemma} \label{ksrrlemma} For $r,R$ as in equation (\ref{rrdef}), $X>1$ and $s=1/2+it$, where $t$ is a non\hyp{}zero real number, we have
\begin{eqnarray}K_s(R,r) &=&\frac{1}{\pi s(1-s)}\left( \frac{R\log{R}-r\log{r}}{R-r}-\frac{\log{2}}{2}\right)  +O(t^{-7/2}\log{X}), \label{asy3}\end{eqnarray}
uniformly as $s,X \rightarrow \infty$.
\end{lemma}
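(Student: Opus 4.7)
The plan is to start from the explicit identity
$$K_s(R,r) = \frac{8\sqrt{2}\,a}{15\pi} \cdot {}_3F_2\!\left(1, \tfrac{s+1}{2}, \tfrac{2-s}{2}; \tfrac{7}{2}, \tfrac{3}{2}; -2\right),$$
derived just above the statement. Combined with $a = \tfrac{3\sqrt{2}}{8}\bigl(\tfrac{R\log R - r\log r}{R-r} - \tfrac{\log 2}{2}\bigr) = O(\log X)$ from (\ref{aldef}), and the arithmetic identity $\tfrac{8\sqrt{2}}{15\pi} \cdot \tfrac{5}{2} \cdot \tfrac{3\sqrt{2}}{8} = \tfrac{1}{\pi}$, the lemma reduces to proving
$${}_3F_2\!\left(1, \tfrac{s+1}{2}, \tfrac{2-s}{2}; \tfrac{7}{2}, \tfrac{3}{2}; -2\right) = \frac{5}{2s(1-s)} + O(t^{-7/2})$$
as $|t| \to \infty$; multiplication by $\tfrac{8\sqrt{2}\,a}{15\pi}$ then recovers the claimed main term together with the error $O(t^{-7/2}\log X)$.

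To prove this ${}_3F_2$ asymptotic I would apply the connection formula (\ref{hgexp}) directly. For $s = 1/2 + it$ with $t \ne 0$ the three upper parameters $1, (s+1)/2, (2-s)/2$ have pairwise non-integer differences, so no $\varepsilon$-perturbation is needed, in contrast to the proof of Lemma \ref{intozlem}(b). The formula decomposes the function as a sum of three companion ${}_3F_2$'s at the argument $-1/2$, each carrying a Gamma-quotient prefactor. The term indexed by $a_1 = 1$ is the main contribution: its Gamma quotient simplifies through $\Gamma((s-1)/2)/\Gamma((s+1)/2) = 2/(s-1)$ and $\Gamma(-s/2)/\Gamma((2-s)/2) = -2/s$ to exactly $\tfrac{5}{s(1-s)}$, the companion ${}_3F_2(1,-3/2,1/2;(3-s)/2,(s+2)/2;-1/2)$ equals $1 + O(t^{-2})$ because its denominator Pochhammers are of size $|t|^{2n}$, and after the power factor $2^{-1}$ this piece contributes $\tfrac{5}{2s(1-s)} + O(t^{-4})$.

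For the two remaining terms, indexed by $a_2 = (s+1)/2$ and $a_3 = (2-s)/2$, Stirling's formula applied to each Gamma quotient produces prefactors of size $|t|^{-7/2}$: the $e^{-\pi|t|/2}$ exponentials cancel between numerator and denominator, leaving only polynomial factors in $|t|$. After a top-bottom parameter cancellation the companion hypergeometric function reduces to a ${}_2F_1$ at the fixed argument $-1/2$ with large complex parameters. The main obstacle will be bounding this ${}_2F_1$ by $O(1)$ uniformly in $t$: the trivial Euler-integral estimate gives only $O(|t|^{1/2})$, which would degrade the final bound to $O(|t|^{-3})$; a stationary phase analysis of the oscillatory Euler integral, with interior saddle at $u^{*} = -2+\sqrt{6} \in (0,1)$, recovers the needed $O(1)$ bound and yields the stated $O(t^{-7/2})$ control on the remainder.
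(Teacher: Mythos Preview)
Your approach is essentially the same as the paper's: apply the connection formula (\ref{hgexp}) to the ${}_3F_2$ at argument $-2$, extract the main term $\tfrac{5}{2s(1-s)}$ from the $a_1=1$ piece, and bound the two remaining pieces by $O(t^{-7/2})$ via Stirling on their Gamma prefactors together with an $O(1)$ bound on the residual hypergeometric factor.

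The only place you work harder than necessary is the last step. After the top--bottom cancellation you correctly identify, the companion function for the $a_2=(s+1)/2$ term is
\[
{}_2F_1\!\left(\tfrac{s-4}{2},\tfrac{s}{2};\,s+\tfrac12;\,-\tfrac12\right),
\]
which is exactly of the shape ${}_2F_1(r+c,r;2r+b;z)$ with $r=s/2$, $c=-2$, $b=1/2$ and fixed $z=-1/2$. The paper's estimate (\ref{hypgo1}) therefore gives the $O(1)$ bound directly (and symmetrically for the $a_3$ term). Your proposed stationary-phase analysis of the Euler integral would work, but it amounts to rederiving (\ref{hypgoas})/(\ref{hypgo1}); since the paper already records this as a tool, you can simply cite it and skip the saddle-point computation.
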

\begin{proof}
Using equation (\ref{hgexp}) and equation (\ref{hypgo1}), we have
\begin{eqnarray} {}_3F_2\left(1, \frac{s+1}{2}, \frac{2-s}{2} \, ;\frac{7}{2}, \frac{3}{2} \, ; -2 \right) =\frac{5}{s(1-s)}\cdot \frac{1}{2}\cdot {}_3F_2\left(1, -\frac{3}{2}, \frac{1}{2} \, ;\frac{3-s}{2}, \frac{s+2}{2} \, ; -\frac{1}{2} \right)+O(t^{-7/2}). \nonumber\end{eqnarray}
By the series definition of the ${}_3F_2$ (see equation (\ref{hgseriesdef})), we deduce that
\begin{eqnarray} {}_3F_2\left(1, \frac{s+1}{2}, \frac{2-s}{2} \, ;\frac{7}{2}, \frac{3}{2} \, ; -2 \right)
&=&\frac{5}{2s(1-s)}+O(t^{-7/2}). \nonumber\end{eqnarray}
Hence,   \begin{eqnarray}K_s(R,r)&=& \frac{8a \sqrt{2}}{15\pi} \left(\frac{5}{2s(1-s)}+O(t^{-7/2}) \right) \nonumber \\ &=& \frac{1}{\pi s(1-s)}\left( \frac{R\log{R}-r\log{r}}{R-r}-\frac{\log{2}}{2}\right)  +O(t^{-7/2}\log{X}), \nonumber\end{eqnarray}
where $a$ is as in equation (\ref{aldef}).
\end{proof}
We note that, by Stirling's approximation formula,
\begin{eqnarray}
\label{gamma2}
\gamma_J(s) &\ll& t^{-5/2}, \\
\label{gamma3}
\gamma_K(s) &\ll& t^{-7/2}.
\end{eqnarray}

\subsection{Upper bounds for large \texorpdfstring{$X$}{X} and \texorpdfstring{$t$}{t} } \,

We now use the formulae of Lemma \ref{intozlem} together with equation (\ref{asy3}) to establish upper bounds for $G_J, \; G_K$ and their derivatives with respect to $u$. We then use them to obtain corresponding bounds for $d_t^{(1)}(f_3)$ and $d_t^{(1)}(f_4)$. We derive two sets of such bounds: one (see Lemma \ref{estimate1}) that is good enough to make the tail of the expansion absolutely convergent, and one (see Lemma \ref{estimate2}) that is better in the $X$ aspect with the expense of being worse in the $t$ aspect. We will apply the second set of bounds to the intermediate spectral terms. 
\begin{lemma} \label{gbound1} \, For $u >1 $ and $s=1/2+it$, where $t$ non\hyp{}zero real number or $t$ is such that $s \in\left(1/2,1\right)$, we have 
\begin{flalign*}
&\textup{(a)}& &\textup{(i)} \quad \; G_J(u,s)=O(1),&&\textup{(ii)} \quad G_K(u,s)=O(1),& &&
\\
 &\textup{(b)}& &\textup{(i)} \quad G'_J(u,s)\ll t \cdot  u^{-2},&&\textup{(ii)} \quad G'_K(u,s) \ll t^{1/2} \cdot u^{-3},& &&
    \end{flalign*}
     uniformly as $u,t \rightarrow \infty$, where $G_J,\;G_K$ are as in equation (\ref{gjdef}) and equation (\ref{gkdef}).
\end{lemma}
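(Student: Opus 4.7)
The four bounds all reduce to analyzing the power series expansion of each hypergeometric function near the origin, since the arguments $-u^{-1}$ and $-u^{-2}$ are small when $u>1$. The leading ($n=0$) term contributes the main piece, and the $n\geq 1$ tail must be controlled uniformly in the large parameter $s=1/2+it$. The key tools are the standard differentiation formula
\[
\frac{d}{dz}\,{}_pF_q(a_1,\ldots,a_p;b_1,\ldots,b_q;z)=\frac{a_1\cdots a_p}{b_1\cdots b_q}\,{}_pF_q(a_1+1,\ldots,a_p+1;b_1+1,\ldots,b_q+1;z),
\]
Stirling's formula for ratios of Gamma functions, and the Legendre duplication identity in the form $(s/2)_n(s/2+1/2)_n=(s)_{2n}/4^n$.

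For part (a), I would expand $G_J(u,s)=\sum_{n\geq 0} \frac{((s+1)/2)_n((s-3)/2)_n}{(s+1/2)_n\,n!}(-1/u)^n$, and show that after telescoping the Pochhammer ratios each general term is bounded by $(C/u)^n/n!$ for some absolute constant $C$, using Stirling on the individual Gamma factors. Since $u>1$, the sum converges absolutely to $1+O(1)$, giving $G_J=O(1)$. The argument for $G_K$ is identical, with the stronger decay $(C/u^2)^n/n!$ coming from the argument $-u^{-2}$ and the duplication identity applied to the pair $(s/2)_n(s/2+1/2)_n$ in the series definition.

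For part (b), I would apply the differentiation formula above, combined with the chain rule $dz/du=u^{-2}$ for $G_J$ and $dz/du=2u^{-3}$ for $G_K$. In the $G_J$ case the prefactor $((s+1)/2)((s-3)/2)/(s+1/2)$ simplifies via $(s+1)(s-3)/(4(s+1/2))$ to a quantity of magnitude $\ll t$, while the shifted $_2F_1$ is still $O(1)$ by the same mechanism as part (a); this yields $G'_J\ll t\,u^{-2}$. For $G_K$ the prefactor $(s/2)(s/2-1)(s/2-1/2)/[(s/2+1/2)(s+1/2)]$ simplifies, using the identity $4(s/2)(s/2-1/2)=s(s-1)$, to $s(s-1)(s-2)/[2(s+1)(2s+1)]$, which is again of magnitude $\ll t$.

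The main obstacle is the extra saving of $t^{-1/2}$ required to turn the naive bound $t\,u^{-3}$ into the claimed $t^{1/2}\,u^{-3}$ for $G'_K$. To extract it, I would re-examine the shifted series $_3F_2(s/2+1,s/2,s/2+1/2;s/2+3/2,s+3/2;-u^{-2})$ obtained from the differentiation formula, apply the duplication identity to the pair $(s/2)_n(s/2+1/2)_n$ in the shifted parameters, and analyze the resulting Gamma-function ratios $\Gamma(s/2+3/2)/\Gamma(s/2+1)$ and $\Gamma(s+3/2)/\Gamma(s)$ by Stirling. The asymptotics $|\Gamma(s/2+3/2)/\Gamma(s/2+1)|\sim(t/2)^{1/2}$ provides precisely the $t^{-1/2}$ saving inside the shifted series, once combined with the $t$ prefactor. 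This Stirling-level tracking of cancellations among large complex parameters (as opposed to term-by-term absolute-value estimates, which only deliver $t\,u^{-3}$) is the delicate step of the argument.
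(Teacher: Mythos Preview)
Your power-series approach has a genuine gap: term-by-term absolute-value bounds on the coefficients do \emph{not} produce a uniform $O(1)$ estimate in both $u$ and $t$. Consider the $n$-th coefficient of $G_J$,
\[
\frac{((s+1)/2)_n\,((s-3)/2)_n}{(s+1/2)_n\,n!}.
\]
For $s=1/2+it$ with $|t|$ large and $n$ fixed, each Pochhammer symbol behaves like $(it/2)^n$ in the numerator and $(it)^n$ in the denominator, so the ratio has magnitude $\asymp (t/4)^n/n!$, not $C^n/n!$ with an absolute $C$. Summing gives only $|G_J|\lesssim e^{t/(4u)}$, which blows up when $t/u\to\infty$. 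The same phenomenon occurs for $G_K$: for $n=1$ the coefficient $(s/2)(s/2-1)(s/2-1/2)/[(s/2+1/2)(s+1/2)]$ is again of size $\asymp t$, so the series bound is $e^{t/(4u^2)}$. No amount of Stirling manipulation on individual Pochhammer ratios recovers this loss, because the required cancellation is between terms of the series, not within a single term.

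The paper obtains the uniform bounds by an entirely different route. For $G_J$ it invokes a uniform large-parameter asymptotic of Luke for ${}_2F_1(r,r+c;2r+b;z)$ (equation~(\ref{hypgoas}) in the appendix), which directly gives $O(1)$. For $G_K$ it first writes the ${}_3F_2$ as an Euler-type integral of a ${}_2F_1$ via~(\ref{hintrans}), applies the same Luke asymptotic inside the integral, and then performs an oscillatory-integral argument (integration by parts in the phase $A(x)^{it}$, splitting at a scale $\epsilon\sim 1/t$) to extract the uniform $O(1)$. The extra $t^{-1/2}$ saving you need for $G_K'$ comes precisely from the $r^{-1/2}$ in the Luke asymptotic combined with this stationary-phase type analysis; it is not visible at the level of power-series coefficients.
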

\begin{proof}
For part (ii) of (a), by equation (\ref{hintrans}), we have 
\begin{equation}\label{intrep1}G_K(u,s)=\frac{1}{B\left(1/2,s/2 \right) }\int_{0}^{1}x^{s/2-1}(1-x)^{-1/2} {}_2F_1\left(\frac{s}{2}-\frac{1}{2},\frac{s}{2}-1;s+\frac{1}{2};-u^{-2}x\right)\, dx,\end{equation}
where $B(z_1,z_2)$ is the Beta function (see equation (\ref{betadef})). We demonstrate the case when $t$ is non\hyp{}zero real number. The case $s$ real is straightforward.

From  equation (\ref{quadtrans3}) and the series definition of the hypergeometric function, we have
$${}_2F_1\left(\frac{s}{2}-\frac{1}{2},\frac{s}{2}-1;s+\frac{1}{2};x\right)=\left( \frac{1-\sqrt{1-x}}{x}\right)^{s-1/2}(1-x)^{3/4}+O\left(s^{-1}\right).$$
Hence, setting \begin{equation} \label{adef} A(x)=\frac{\sqrt{1+u^{-2}x}-1}{\sqrt{x}}=\frac{u^{-2}\sqrt{x}}{1+\sqrt{1+u^{-2}x}}, \nonumber \end{equation} we get
\begin{eqnarray}G_K(u,s) &\ll& \sqrt{t} \left|\int_{0}^{1}x^{s/2-1}\left( \frac{1-\sqrt{1+u^{-2}x}}{-u^{-2}x}\right)^{s-1/2}(1+u^{-2}x)^{3/4} \cdot (1-x)^{-1/2}\, dx \right|+O\left(s^{-1/2}\right) \nonumber \\
&\ll& \sqrt{t} \left|\int_{0}^{1}A^{it}\left(\frac{1+u^{-2}x}{x}\right)^{3/4}\cdot (1-x)^{-1/2} \, dx\right|+O\left(s^{-1/2}\right) \nonumber \\
&\ll& \sqrt{t} \left|\int_{0}^{1-\epsilon}A^{it}\left(\frac{1+u^{-2}x}{x}\right)^{3/4}\cdot (1-x)^{-1/2} \, dx \right|+\sqrt{t} \int_{1-\epsilon}^{1} (1-x)^{-1/2} \, dx+O\left(s^{-1/2}\right) \nonumber \\
&\ll& \sqrt{t} \left|\int_{0}^{1-\epsilon}A^{it}\left(\frac{1+u^{-2}x}{x}\right)^{3/4}\cdot (1-x)^{-1/2} \, dx \right|+O\left(t^{1/2}\epsilon^{1/2}+t^{-1/2}\right), \nonumber
\end{eqnarray}
where $\epsilon\in(0,1)$ to be chosen later.

Note that $A'(x)=A(x)/\left(2x\sqrt{1+u^{-2}x}\right)$. Hence, we have \begin{flalign*} &\! \! \! G_K(u,s)  & \nonumber \\
 \ll& \; \sqrt{t} \left|\int_{0}^{1-\epsilon}A'\cdot A^{it-1}x^{1/4}\left(1+u^{-2}x\right)^{5/4}(1-x)^{-1/2} \, dx\right|+O\left(t^{1/2}\epsilon^{1/2}+t^{-1/2}\right)& \nonumber \\
\ll& \; \sqrt{t} \cdot\left| \frac{\epsilon^{-1/2}}{it}-\frac{1}{it}\int_{0}^{1-\epsilon}A^{it}\left(x^{1/4}\left(1+u^{-2}x\right)^{5/4}(1-x)^{-1/2}\right)'\, dx\right|+O\left(t^{1/2}\epsilon^{1/2}+t^{-1/2}\right)& \nonumber \\
 \ll& \; \frac{1}{t}\left|\int_{0}^{1-\epsilon}\left(x^{1/4}\left(1+u^{-2}x\right)^{5/4}(1-x)^{-1/2}\right)'\, dx\right|+O\left(t^{-1/2}\epsilon^{-1/2}+t^{1/2}\epsilon^{1/2}+t^{-1/2}\right)& \nonumber \\
\ \ll& \; t^{-1/2}\epsilon^{-1/2}+t^{1/2}\epsilon^{1/2}+t^{-1/2}. &\nonumber
\end{flalign*}
We optimize by taking $t^{-1/2}\epsilon^{-1/2}=t^{1/2}\epsilon^{1/2},$
i.e., $\epsilon=1/t$. This gives $$G_K(u,s) \ll 1.$$
For part (i) of (a), we apply equation (\ref{conti}) on $G_J(u,s)$ (defined in (\ref{gjdef})), and then we apply equation (\ref{quadtrans2}) combined with the series definition of the hypergeometric function.

For part (ii) of (b), we differentiate equation (\ref{intrep1}) to get
$$G'_K(u,s)=\frac{1}{B\left(\frac{1}{2},\frac{s}{2} \right) }\int_{0}^{1}x^{s/2-1}(1-x)^{-1/2}\cdot \left(\frac{2x}{u^3}\cdot \frac{\left(\frac{s}{2}-\frac{1}{2}\right)\left(\frac{s}{2}-1 \right)}{s+\frac{1}{2}}{}_2F_1\left(\frac{s}{2}+\frac{1}{2},\frac{s}{2};s+\frac{3}{2};-u^{-2}x\right) \right)\, dx.$$

We proceed similarly with part (ii) of (a) to get
$$G'_K(u,s) \ll  t^{1/2}u^{-3}.$$
For part (i) of (b), we have
        $$G'_{J}(u,s)=\frac{(s+1)(s-3)}{4s+2} \cdot u^{-2} \cdot {}_2F_1\left( \frac{s+3}{2}, \frac{s-1}{2} \, ; s+\frac{3}{2} \, ; \frac{-1}{u} \right).$$
        The result then follows from equation (\ref{quadtrans2}) combined with the series definition of the hypergeometric function.
\end{proof}
 Combining the estimates (\ref{gamma2}),
 (\ref{gamma3}) for $\gamma_J(s), \; \gamma_K(s)$
 with Lemma \ref{gbound1}(a), Lemma \ref{intozlem}, equations (\ref{mvt2}),(\ref{mvt3}) and equation (\ref{asy3}), we conclude the following lemma.

\begin{lemma}   \label{estimate1} \, For $X > 1$ and $s=1/2+it$, where $t$ 
 is a non\hyp{}zero real number, we have 
\begin{flalign*}
&\textup{(a)}&   & \displaystyle d_{t}^{(1)}(f_3) \ll X^{3/2}Y^{-1}t^{-5/2},&  &&\\
&\textup{(b)}&  & \displaystyle d_{t}^{(1)}(f_4) \ll X^{3/2}Y^{-1}t^{-7/2},& &&
\end{flalign*}
    uniformly as $X,|t| \rightarrow \infty$.
\end{lemma}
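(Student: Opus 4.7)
The proof applies the representations from Lemma \ref{intozlem} in conjunction with the decay $\gamma_J(s) \ll t^{-5/2}$ and $\gamma_K(s) \ll t^{-7/2}$ from (\ref{gamma2})--(\ref{gamma3}) and the uniform bounds $G_J(u,s), G_K(u,s) = O(1)$ from Lemma \ref{gbound1}(a). Throughout, I would use that on $\mathrm{Re}(s) = 1/2$ we have $|u^{(3-s)/2}| = |u^{1+s/2}| = u^{5/4}$ and $|u^{2-s}| = |u^{s+1}| = u^{3/2}$, together with $R^2 - r^2 \asymp XY$ and $R - r \asymp Y$ coming from (\ref{rrdef}).

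For part (a), plug Lemma \ref{intozlem}(a) into (\ref{mvt2}). Each of the two summands $\gamma_J(s) G_J(u,s) u^{(3-s)/2}$ and $\gamma_J(1-s) G_J(u, 1-s) u^{1+s/2}$, evaluated at $u \in \{R^2, r^2\}$ (both of size $\asymp X^2$), is bounded by $t^{-5/2} X^{5/2}$. The triangle inequality gives $|J_s(R^2) - J_s(r^2)| \ll t^{-5/2} X^{5/2}$, and dividing by $R^2 - r^2 \asymp XY$ yields the claim with no cancellation needed.

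For part (b), a direct triangle inequality on $K_s(R) - K_s(r)$ is insufficient, because the terms $-u \log u /(\pi s(1-s))$ and $C(s) u$ appearing in Lemma \ref{intozlem}(b) are of size roughly $X \log t / t^{2}$ and do \emph{not} enjoy $t^{-7/2}$ decay. The key point is that the three summands of $d_t^{(1)}(f_4)$ in (\ref{mvt3}) are arranged precisely so that these pieces cancel. I would introduce $\tilde K_s(u) := K_s(u) + u\log u/(\pi s(1-s)) - C(s) u$, which by Lemma \ref{intozlem}(b) satisfies $\tilde K_s(u) \ll t^{-7/2} u^{3/2}$ uniformly. Substituting into (\ref{mvt3}), the $\tilde K_s$ part contributes $(\tilde K_s(R) - \tilde K_s(r))/(R-r) \ll t^{-7/2} X^{3/2}/Y$, as in part (a). The logarithmic piece produces $-(R \log R - r \log r)/(\pi s(1-s)(R-r))$, which by Lemma \ref{ksrrlemma} combines with $K_s(R,r)$ to leave only $-\log 2 / (2 \pi s(1-s)) + O(t^{-7/2} \log X)$. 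The $C(s) u$ piece produces a residual $C(s)$; expanding $K_s(\sqrt 2)$ itself via Lemma \ref{intozlem}(b) gives $K_s(\sqrt 2)/\sqrt 2 = C(s) - \log 2/(2\pi s(1-s)) + O(t^{-7/2})$, so after subtracting $K_s(\sqrt 2)/\sqrt 2$ the remaining constants collapse to $O(t^{-7/2})$. Adding all errors and using $Y \asymp X$ (so that $\log X \ll X^{1/2} \asymp X^{3/2}/Y$) yields the claimed bound.

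The main obstacle is precisely this cancellation bookkeeping in part (b): one must track the non-decaying contributions of $-u \log u/(\pi s(1-s))$ and $C(s) u$ across \emph{all four} appearances of $K_s$ in $d_t^{(1)}(f_4)$ (at arguments $R$, $r$, $\sqrt 2$, and inside the special quantity $K_s(R,r)$) and verify that the resulting cancellations are exact. This depends crucially on using Lemma \ref{intozlem}(b) itself---rather than any alternative representation---at $u=\sqrt{2}$, so that the constants $C(s)$ and $\log 2 /(2 \pi s(1-s))$ match up correctly. After the cancellations are identified, the remaining bounds reduce to the same triangle-inequality argument as in part (a).
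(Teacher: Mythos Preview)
Your proposal is correct and follows the same approach as the paper, which simply asserts that the lemma follows by combining the bounds (\ref{gamma2})--(\ref{gamma3}), Lemma~\ref{gbound1}(a), Lemma~\ref{intozlem}, equations (\ref{mvt2})--(\ref{mvt3}), and Lemma~\ref{ksrrlemma}. The cancellation bookkeeping you carry out in part~(b)---matching the $-u\log u/(\pi s(1-s))$ and $C(s)u$ contributions at $R$, $r$, $\sqrt{2}$, and inside $K_s(R,r)$---is exactly what is needed and is spelled out by the paper only in the proof of the subsequent Lemma~\ref{estimate2}; one small remark is that your final absorption $t^{-7/2}\log X \ll t^{-7/2}X^{3/2}Y^{-1}$ does not actually require $Y\asymp X$, since $Y<X$ already gives $X^{3/2}/Y > X^{1/2}\gg \log X$.
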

We now apply the mean value theorem on equations (\ref{mvt2}), (\ref{mvt3}) and use Lemma \ref{gbound1}(b) to find estimates that are stricter in the $X$ aspect but worse in the $t$ aspect.
\begin{lemma} \label{estimate2} \, For $X > 1$ and $t \neq 0$ real, we have 
\begin{flalign*}
&\textup{(a)}& &d_{t}^{(1)}(f_3) \ll X^{1/2}t^{-3/2},& && \\
&\textup{(b)}& &d_{t}^{(1)}(f_4) \ll X^{1/2}t^{-5/2},& &&
    \end{flalign*}
     uniformly as $X,|t| \rightarrow \infty$.
\end{lemma}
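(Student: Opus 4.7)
The plan is to improve on Lemma \ref{estimate1} by applying the mean value theorem to the difference quotients appearing in formulas (\ref{mvt2}) and (\ref{mvt3}), and then differentiating the asymptotic decompositions from Lemma \ref{intozlem} using the bounds on $G_J', G_K'$ supplied by Lemma \ref{gbound1}(b). The gain in the $X$ aspect comes from the fact that the mean value theorem replaces the differences $J_s(R^2) - J_s(r^2)$ (of size essentially $X^{5/2}$) by a derivative (of size essentially $X^{1/2}$) times $R^2 - r^2$. The loss of a factor of $t$ in the $t$ aspect comes from the fact that the derivatives of $G_J, G_K$ with respect to $u$ carry an extra factor of $t$ compared to the pointwise bounds.

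For part (a), write $d_t^{(1)}(f_3) = J_s'(\xi)$ for some $\xi \in (r^2, R^2)$. Differentiating the formula of Lemma \ref{intozlem}(a) and applying $G_J(u,s) = O(1)$, $G_J'(u,s) \ll t\,u^{-2}$ together with $\gamma_J(s) \ll t^{-5/2}$ from (\ref{gamma2}), the dominant contribution is
\begin{equation*}
\gamma_J(s) \cdot \tfrac{3-s}{2} G_J(\xi,s)\, \xi^{(1-s)/2} \;\ll\; t^{-5/2} \cdot t \cdot \xi^{1/4} \;\ll\; t^{-3/2} X^{1/2},
\end{equation*}
since $\xi \asymp X^2$; the $1-s$ counterpart is of the same size, and the $G_J'$ pieces are strictly smaller.

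For part (b), more care is needed because $K_s(u)$ also contains a linear term $C(s)u$ and a linear-logarithmic term $-\frac{u\log u}{\pi s(1-s)}$, whose derivatives $C(s)$ and $-\frac{\log u + 1}{\pi s(1-s)}$ do \emph{not} satisfy the target bound $X^{1/2}t^{-5/2}$. The idea is to separate
\begin{equation*}
K_s(u) = \widetilde{K}_s(u) + C(s)\,u - \frac{u\log u}{\pi s(1-s)}, \qquad \widetilde{K}_s(u) := -\gamma_K(s)G_K(u,s)u^{2-s} - \gamma_K(1-s)G_K(u,1-s)u^{s+1} + O(u^{-1}t^{-4}),
\end{equation*}
and apply the mean value theorem only to $\widetilde{K}_s$. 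Differentiating via Lemma \ref{intozlem}(b) and combining $G_K(u,s)=O(1)$, $G_K'(u,s)\ll t^{1/2}u^{-3}$ from Lemma \ref{gbound1}(b) with $\gamma_K(s)\ll t^{-7/2}$ from (\ref{gamma3}), one finds that the dominant piece of $\widetilde{K}_s'(\eta)$ is $\gamma_K(s)(2-s)G_K(\eta,s)\eta^{1-s}\ll t^{-5/2}X^{1/2}$ for $\eta\in(r,R)$, with all other contributions strictly smaller.

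The final step is to observe that the remaining pieces cancel. The linear part produces $\frac{C(s)R - C(s)r}{R-r} = C(s)$, while the $K_s(R,r)$ term in (\ref{mvt3}) has leading behaviour $\frac{1}{\pi s(1-s)}\bigl(\frac{R\log R - r\log r}{R-r} - \frac{\log 2}{2}\bigr) + O(t^{-7/2}\log X)$ by Lemma \ref{ksrrlemma}, which exactly absorbs the logarithmic difference quotient modulo a $\log 2$ term. Finally, evaluating Lemma \ref{intozlem}(b) at $u=\sqrt{2}$ shows that $-\tfrac{1}{\sqrt{2}}K_s(\sqrt{2}) = -C(s) + \frac{\log 2}{2\pi s(1-s)} + O(t^{-3})$, so the $C(s)$ and $\log 2$ pieces telescope and only $O(t^{-5/2}X^{1/2})$ survives. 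The main obstacle, and the only subtlety of the argument, is keeping track of these cancellations; they were engineered into the construction of $f_4$ by the choice of the constant $a$ in (\ref{aldef}), so once the right decomposition is written down the identity is clean.
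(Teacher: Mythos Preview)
Your proposal is correct and follows essentially the same approach as the paper: apply the mean value theorem to the difference quotients in (\ref{mvt2}) and (\ref{mvt3}), differentiate the decompositions of Lemma \ref{intozlem}, bound the resulting terms via Lemma \ref{gbound1} and (\ref{gamma2})--(\ref{gamma3}), and for part (b) track the cancellation of the $C(s)$ and logarithmic pieces against $-\tfrac{1}{\sqrt{2}}K_s(\sqrt{2})$ and $K_s(R,r)$ via Lemma \ref{ksrrlemma}. The only minor discrepancy is that the error in your evaluation of $-\tfrac{1}{\sqrt{2}}K_s(\sqrt{2})$ should be $O(t^{-7/2})$ rather than $O(t^{-3})$, but this is harmless since both are absorbed into $O(t^{-5/2})$.
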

\begin{proof}
We demonstrate the proof of part (b). The proof of part (a) is similar.

   By equation (\ref{mvt3}), Lemma \ref{intozlem}(b) and the mean value theorem, we have that, for some $\xi \in \left[r,R\right] $,
    \begin{eqnarray}d_{t}^{(1)}(f_4)=&-&\gamma_K(s) \left(\xi^{2-s}G'_K(\xi,s)+(2-s)\xi^{1-s}G_K(\xi,s) \right) \nonumber \\
   &-&\gamma_K(1-s) \left(\xi^{s+1}G'_K(\xi,1-s)+(s+1)\xi^{S}G_K(\xi,1-s) \right) \nonumber \\
    &-& \frac{1}{\pi \lambda}\cdot \frac{R\log{R}-r\log{r}}{R-r}+C(s)+O(X^{-1}Y^{-1}t^{-4}) -\frac{1}{\sqrt{2}}K_s(\sqrt{2})+K_s(R,r). \nonumber 
    \end{eqnarray}
    Applying part (ii) of Lemma \ref{gbound1}(a) and equation (\ref{gamma3}) on Lemma \ref{intozlem}(b), we have
     $$\frac{1}{\sqrt{2}}K_s(\sqrt{2})=\frac{\log{2}}{2\pi s(1-s)}+C(s)+O(t^{-7/2}).$$
     Hence,
     \begin{eqnarray}d_{t}^{(1)}(f_4)=&-&\gamma_K(s) \left(\xi^{2-s}G'_K(\xi,s)+(2-s)\xi^{1-s}G_K(\xi,s) \right) \nonumber \\
   &-&\gamma_K(1-s) \left(\xi^{s+1}G'_K(\xi,1-s)+(s+1)\xi^{S}G_K(\xi,1-s) \right) \nonumber \\
    &-& \frac{1}{\pi \lambda}\cdot \left(\frac{R\log{R}-r\log{r}}{R-r}-\frac{\log{2}}{2}\right)+K_s(R,r)+O(t^{-7/2}), \nonumber
    \end{eqnarray}
    where the $C(s)$'s cancel out.
    We now observe that the last line of the equation above matches equation (\ref{asy3}). Therefore,
    \begin{eqnarray}d_{t}^{(1)}(f_4)=&-&\gamma_K(s) \left(\xi^{2-s}G'_K(\xi,s)+(2-s)\xi^{1-s}G_K(\xi,s) \right) \nonumber \\
   &-&\gamma_K(1-s) \left(\xi^{s+1}G'_K(\xi,1-s)+(s+1)\xi^{S}G_K(\xi,1-s) \right) \nonumber \\
    &+&O(t^{-7/2} \log{X}). \nonumber
    \end{eqnarray}
    Applying part (ii) of Lemma \ref{gbound1}(a) and part (ii) of Lemma \ref{gbound1}(b) together with equation (\ref{gamma3}),
 we conclude that
    \begin{eqnarray}d_{t}^{(1)}(f_4)\ll t^{-7/2}\left(\xi^{3/2}\cdot t^{1/2}\xi^{-3}+t\cdot \xi^{1/2}\right)+ 
t^{-7/2}\log{X} \ll t^{-5/2}\xi^{1/2} \ll t^{-5/2}X^{1/2}.  \nonumber
    \end{eqnarray}

    For part (a), we use equation (\ref{mvt2}) instead of (\ref{mvt3}), Lemma \ref{intozlem}(a) instead of \ref{intozlem}(b), part (i) of Lemma \ref{gbound1}(a),(b) instead of part (ii), and, finally, equation (\ref{gamma2}) instead of equation (\ref{gamma3}).
\end{proof}
For the case $t <\left(X/Y\right)^2= D^{-2},$ we provide more accurate estimates. While not needed for the proof of Theorem \ref{mainthm}, they will be a key ingredient in the proof of Theorem \ref{newmserr}. 
\begin{lemma} \label{sievecoeff}
For $X>1$, $X^{-1/2}<D=o(1)$, and $t< D^{-2}$, we have
\begin{flalign*}
&\textup{(a)}&   & \displaystyle d_{t}^{(1)}(f_3) = X^{1/2} (a(t,D) X^{it} + a(-t,D)  X^{-it})+O(t^{-5/2}),&  &&\\
&\textup{(b)}&  & \displaystyle d_{t}^{(1)}(f_4) = X^{1/2} (b(t,D) X^{it} + b(-t,D)  X^{-it})+O(t^{-7/2}\log{X}),& &&
\end{flalign*}
    where
   $$ a(t,D) \ll |t|^{-5/2} \cdot \mathrm{min} \left(D^{-1}, \; |t| \right),$$
   and
   $$b(t,D)  \ll |t|^{-7/2} \cdot \mathrm{min}\left(D^{-1}, \; |t|\right).$$
\end{lemma}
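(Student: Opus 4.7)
The plan is to expand $d_t^{(1)}(f_3)$ and $d_t^{(1)}(f_4)$ via equations (\ref{mvt2}) and (\ref{mvt3}), substitute the explicit asymptotics from Lemma \ref{intozlem}, and then carefully extract the $X^{\pm it}$ components while controlling all remaining contributions using Lemma \ref{gbound1} together with the hypothesis $t < D^{-2}$ (which, combined with $D \ge X^{-1/2}$, forces $t < X$ and hence $X^{-3/2} < t^{-1}$).

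For part (a), substituting Lemma \ref{intozlem}(a) into $(J_s(R^2)-J_s(r^2))/(R^2-r^2)$ produces two divided differences, involving the powers $u^{1+s/2}$ and $u^{(3-s)/2}$. In each, I first freeze the slowly varying $G_J$ at $X^2$: by Lemma \ref{gbound1}(b)(i), $|G_J(R^2, 1-s) - G_J(X^2, 1-s)| \ll tX^{-4}(R^2 - X^2) \ll tDX^{-2}$, and after dividing by $R^2 - r^2 \asymp DX^2$ and multiplying by $|\gamma_J(1-s)| \ll t^{-5/2}$ (equation (\ref{gamma2})) this contributes $\ll t^{-3/2} X^{-3/2}$, which is $O(t^{-5/2})$ in our window. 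I then Taylor-expand $R = \sqrt{(X+Y)^2 - 1}$ and $r = \sqrt{X^2 - 1}$ in powers of $X^{-1}$ to obtain, for $\alpha = 5/2 + it$,
\begin{equation*}
\frac{R^\alpha - r^\alpha}{R^2 - r^2} = X^{1/2 + it}\, \phi(\alpha, D) + O\!\left( \frac{t}{DX^{3/2}}\right), \qquad \phi(\alpha, D) := \frac{(1+D)^\alpha - 1}{2D + D^2},
\end{equation*}
whose error is again $O(t^{-5/2})$ after multiplying by $\gamma_J$. Putting $a(t,D) := \gamma_J(1-s) G_J(X^2, 1-s) \phi(5/2 + it, D)$ (and its mirror under $t \leftrightarrow -t$), the required bound reduces to $|\phi(5/2+it, D)| \ll \min(D^{-1}, t)$: the $D^{-1}$ bound is trivial since $|(1+D)^{5/2+it}| = O(1)$, and the $t$ bound follows from $|e^w - 1| \le |w| e^{|\Re w|}$ applied to $w = (5/2 + it)\log(1+D)$.

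Part (b) proceeds in parallel fashion, with $K_s$ replacing $J_s$, divided difference taken with respect to $R - r$ instead of $R^2 - r^2$, exponents $3/2 \pm it$ instead of $5/2 \pm it$, and $|\gamma_K| \ll t^{-7/2}$ (equation (\ref{gamma3})) in place of $|\gamma_J|$. Two extra complications appear. First, Lemma \ref{intozlem}(b) carries the non-hypergeometric correction $-u\log u/(\pi s(1-s)) + C(s) u$; exactly as in the proof of Lemma \ref{estimate2}(b), its divided difference combines with $-\tfrac{1}{\sqrt{2}} K_s(\sqrt{2})$ and $K_s(R,r)$ (using Lemma \ref{ksrrlemma}) so that every $C(s)$ and every $(R\log R - r\log r)/(R-r)$ term cancels, leaving only an $O(t^{-7/2}\log X)$ remainder. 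Second, freezing $G_K$ at $u=X$ using Lemma \ref{gbound1}(b)(ii) produces an error $\ll t^{1/2} X^{-3}(R-r)$ per quotient, which after the $\gamma_K$ factor is $\ll t^{-3} X^{-3/2}$, comfortably absorbed into $O(t^{-7/2}\log X)$. Defining $b(t, D) := -\gamma_K(1-s) G_K(X, 1-s) \phi_b(3/2 + it, D)$ with $\phi_b(\alpha, D) := ((1+D)^\alpha - 1)/D$, and bounding $|\phi_b| \ll \min(D^{-1}, t)$ by the same two elementary estimates, I obtain $|b(t, D)| \ll t^{-7/2} \min(D^{-1}, t)$.

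The main obstacle is not any single analytic difficulty but the uniform reconciliation of error terms over the joint window $X^{-1/2} < D = o(1)$, $t < D^{-2}$. The decisive use of the hypotheses is the chain $t < D^{-2} \le X$, which converts each secondary error of the form $t/(DX^{3/2})$ into the required $t^{-5/2}$; relaxing either inequality would force a more elaborate expansion of $R$ and $r$ in powers of $X^{-1}$ and a correspondingly finer split of the $\phi$ and $\phi_b$ factors.
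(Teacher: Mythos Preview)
Your argument is correct and follows essentially the same route as the paper. The only difference is cosmetic: the paper replaces $G_J$ and $G_K$ by $1+O(t/u^2)$ directly from the power series rather than freezing them at $u=X$ via Lemma~\ref{gbound1}(b), so that its coefficients $a(t,D)=\gamma_J(1-s)\,\phi(5/2+it,D)$ and $b(t,D)=-\gamma_K(1-s)\,\phi_b(3/2+it,D)$ are manifestly independent of $X$---a point irrelevant for the lemma as stated but needed in Section~\ref{sectionmse}, where the sieve requires the coefficients not to depend on the running point $X_m$.
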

\begin{proof} We demonstrate the proof of part (b). The proof of part (a) is similar.
Via the series definition of the hypergeometric function and equation (\ref{gkdef}), we have, for $u>1$,
    $$ G_K(u,s)=1+O\left(t/u^2\right).$$
    Hence, via Lemma \ref{intozlem}, we have
    \begin{equation*}
K_s(u)=  \left(-\gamma_K(s)u^{2-s}-\gamma_K(1-s)u^{s+1}\right)\left(1+O(t/u^2) \right)-\frac{u\log{u} }{\pi s(1-s)}+C(s)u+O(u^{-1}t^{-4}).
\end{equation*}
   We combine this with the fact that $t < D^{-2} < X$, equation (\ref{mvt3}), and Lemma \ref{ksrrlemma}, to get
     \begin{eqnarray}d_{t}^{(1)}(f_4)=&-&\left(\gamma_K(s)\frac{R^{2-s}-r^{2-s}}{R-r}+\gamma_K(1-s)\frac{R^{1+s}-r^{1+s}}{R-r}\right)\left(1+O(X^{-1}) \right) \nonumber \\
   &+&O(X^{-1}t^{-4}+t^{-7/2}\log{X}).  \nonumber \end{eqnarray}
   We now use the fact that, for $z \in \mathbb{C}$, we have $$(u+\epsilon)^z-u^z \ll z \cdot \epsilon \cdot u^{z-1},$$
   to write
   $$R^{z}-(X+Y)^z \ll zX^{z-2}, \quad r^z-X^z \ll zX^{z-2},$$
   or, equivalently,
   $$R^{z}-X^{z}(D+1)^z\ll zX^{z-2}, \quad r^z-X^z \ll zX^{z-2}.$$
   Hence, using the fact that $t \ll D^{-2}$, we have

   \begin{eqnarray}d_t^{(1)}(f_4)=&-&\left(\gamma_K(s)\frac{(D+1)^{2-s}-1}{D}X^{1-s}+\gamma_K(1-s)\frac{(D+1)^{1+s}-1}{D}X^{s}\right)\left(1+O\left(X^{-1}\right) \right) \nonumber \\
   &+&O(t^{-7/2}\log{X}). \nonumber \end{eqnarray}
   We let 
   $$b(t,D):=-\gamma_K(1-s)\frac{(D+1)^{1+s}-1}{D}.$$
   As $D=o(1)$, the binomial theorem gives

  $$\frac{(D+1)^{1+s}-1}{D} \ll \hbox{min}\left(D^{-1},|t|\right).$$
   The bound 
   $$b(t,D)  \ll |t|^{-7/2} \cdot \text{min}\left(D^{-1}, \; |t|\right)$$
   now follows from equation (\ref{gamma3}).

   For the proof of part (a), we follow the same argument, using equation (\ref{mvt2}) instead of (\ref{mvt3}), and equation (\ref{gamma2}) instead of equation (\ref{gamma3}). The coefficients $a(t,D)$ are given by $$a(t,D):=\gamma_J(1-s)\frac{(D+1)^{2+s}-1}{(D+1)^2-1}.$$
\end{proof}

\subsection{Estimates corresponding to small eigenvalues} \,

We now provide estimates for the spectral coefficients corresponding to fixed, small eigenvalues, as $X\rightarrow \infty$. The case when $s_j \in \left(1/2,1\right)$ is real, will give the main terms (see Lemma \ref{lemmamterm}). We also show that the case of $s_j=1/2+it_j$ where $t_j \neq 0$ is a small real number contributes only an error of order $O(X^{1/2})$ (see Lemma \ref{medterms}). We consider the limiting case $t=0$ (i.e. $s_j=1/2$) independently in Lemma \ref{halflemma}.
\begin{lemma} \label{halflemma}
    For $t=0$, we have the following bounds:
    $$d^{(1)}_{0}(f_3), \; d^{(1)}_{0}(f_4)=O\left( X^{1/2}\log{X}\right).$$
\end{lemma}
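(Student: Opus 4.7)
My plan is to treat $s=1/2$ (i.e., $t=0$) as a degenerate limit of Lemma \ref{intozlem}. At $s=1/2$ the two exponents $(3-s)/2$ and $1+s/2$ appearing in the expansion of $J_s$ both equal $5/4$, and simultaneously the gamma coefficients $\gamma_J(s)$ and $\gamma_J(1-s)$ acquire simple poles through the factor $\Gamma(1/2-s)$ whose residues cancel. The cancellation produces an extra $\log u$ factor, precisely the mechanism that generated the $u\log u$ term already visible in Lemma \ref{intozlem}(b). An analogous degeneracy occurs in $K_s$, where the exponents $2-s$ and $s+1$ both collapse to $3/2$.

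Concretely, I would apply Lemma \ref{intozlem} with $s=1/2+\varepsilon$ and let $\varepsilon\to 0$. Using
\begin{equation*}
u^{5/4\mp\varepsilon/2} \;=\; u^{5/4}\bigl(1\mp (\varepsilon/2)\log u+O(\varepsilon^2)\bigr)
\end{equation*}
together with the Laurent expansion of $\gamma_J(1/2\pm\varepsilon)$ at $\varepsilon=0$ (whose polar parts cancel), I obtain
\begin{equation*}
J_{1/2}(u) \;=\; c_1\, u^{5/4}\log u + c_2\, u^{5/4} + O(u^{1/4})
\end{equation*}
for some absolute constants $c_1,c_2$, with the error term controlled by Lemma \ref{gbound1}(a)(i) applied to $G_J(u,s)$ at $s=1/2$. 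The same perturbation applied to the finite part of Lemma \ref{intozlem}(b), combined with the pre\hyp{}existing $-u\log u/(\pi s(1-s))$ and $C(s)u$ contributions, yields
\begin{equation*}
K_{1/2}(u) \;=\; c_3\, u^{3/2}\log u + c_4\, u^{3/2} + c_5\, u\log u + c_6\, u + O(u^{-1}).
\end{equation*}

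Inserting these into (\ref{mvt2}) and (\ref{mvt3}), I apply the mean value theorem. Since $(u^{5/4}\log u)'\ll u^{1/4}\log u$ and $R^2-r^2=H\asymp XY\asymp X^2$, an intermediate point $\xi\in[r^2,R^2]$ with $\xi\asymp X^2$ gives
\begin{equation*}
d^{(1)}_0(f_3) \;=\; \frac{J_{1/2}(R^2)-J_{1/2}(r^2)}{R^2-r^2} \;\ll\; \xi^{1/4}\log\xi \;\ll\; X^{1/2}\log X.
\end{equation*}
For $d^{(1)}_0(f_4)$ the analogous MVT argument on $K_{1/2}(R)-K_{1/2}(r)$, with $R-r\asymp Y$ and $(u^{3/2}\log u)'\ll u^{1/2}\log u$, produces the dominant contribution of size $X^{1/2}\log X$; the linear\hyp{}in\hyp{}$u$ pieces contribute only $O(\log X)$, the constant term $\tfrac{1}{\sqrt 2}K_{1/2}(\sqrt 2)$ is harmless, and $K_{1/2}(R,r)=O(\log X)$ either by passing to the limit in Lemma \ref{ksrrlemma} or by direct inspection of its defining integral (the coefficient $a$ from (\ref{aldef}) is $O(\log X)$ while the remaining integral is uniformly bounded at $s=1/2$).

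The main technical obstacle is tracking the two pole cancellations carefully enough to extract only a single power of $\log X$, rather than $\log^2 X$; this bookkeeping is delicate but essentially parallels the perturbation device already used in the proof of Lemma \ref{intozlem}(b).
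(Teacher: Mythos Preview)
Your approach is valid but takes a genuinely different route from the paper. You obtain the asymptotics of $J_{1/2}(u)$ and $K_{1/2}(u)$ by passing to the limit $s\to 1/2$ in Lemma~\ref{intozlem}, exploiting the cancellation of the simple poles of $\gamma_J(s)$ and $\gamma_J(1-s)$ (and likewise for $\gamma_K$) to produce the $u^{5/4}\log u$ and $u^{3/2}\log u$ leading terms, and then apply the mean value theorem. The paper instead returns to the raw integral definitions (\ref{mvtforhalf1}) and (\ref{mvtforhalf2}), applies the mean value theorem directly to those integrals, and bounds the integrand by proving the pointwise estimate ${}_2F_1(3/4,3/4;3/2;-x^2)\ll x^{-3/2}\log x$ from the Euler integral representation (\ref{intrepr}). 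The paper's route is shorter and more elementary: no pole cancellation, no Laurent expansions of gamma coefficients, and no need to track the $s$-dependence of $G_J$ and $G_K$. Your route has the merit of reusing the machinery already built in Lemma~\ref{intozlem} rather than introducing a separate estimate, and it explains structurally where the logarithm originates.

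One point you should make explicit: the error term $O(u^{-1}t^{-4})$ in Lemma~\ref{intozlem}(b) is not uniform as $t\to 0$, so you cannot simply quote that lemma at $s=1/2$. You must revisit its proof and note that at $s=1/2$ the $\epsilon$-derivative of the ${}_3F_2$ appearing there is genuinely $O(u^{-2})$ (the lower parameters $(3-s)/2$ and $(s+2)/2$ are both $5/4$, bounded away from zero), so the remainder becomes $O(u^{-1})$. Similarly, for $K_{1/2}(R,r)$ you must use your direct inspection rather than Lemma~\ref{ksrrlemma}, whose error $O(t^{-7/2}\log X)$ also degenerates; your observation that $a=O(\log X)$ while the ${}_3F_2$ value at $-2$ is an absolute constant is correct. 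With these two points addressed, your argument goes through and your worry about an accidental $\log^2 X$ is unfounded: the pole cancellation yields exactly one factor of $\log u$, and the $s$-derivatives of $G_J$, $G_K$ contribute only $O(u^{-1})$.
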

\begin{proof}
    Applying the mean value theorem for the function $J_s(u)$ (see equation (\ref{mvtforhalf1})),
    we have that there  there is some $\xi \in [r^2,R^2]$ such that \begin{equation}\label{halfest0}d^{(1)}_{0}\left(f_3 \right)=\pi \int_{0}^{\sqrt{\xi}}\frac{x^2}{\sqrt{\xi-x^2}}\cdot {}_2F_1\left( \frac{3}{4}, \frac{3}{4} \, ; \frac{3}{2} \, ; -x^2 \right) dx.\end{equation}

    The integral representation of the hypergeometric function (equation (\ref{intrepr}) gives
\begin{eqnarray}{}_2F_1\left(\frac{3}{4},\frac{3}{4};\frac{3}{2};-x^2\right) &\ll& \int_{0}^{1}u^{-1/4}(1-u)^{-1/4}(1+x^2u)^{-3/4}\, du \nonumber \\ 
&\ll& \int_{0}^{x^{-2}}u^{-1/4}\, du+x^{-3/2}\int_{x^{-2}}^{1}u^{-1}\, du \nonumber \\
&\ll& \frac{\log{x}}{x^{3/2}}. \label{halfest}
\end{eqnarray}
We combine (\ref{halfest0}) and (\ref{halfest}) to conclude that
\begin{equation}d^{(1)}_{0}\left(f_3 \right)\ll  \int_{0}^{\sqrt{\xi}}\frac{x^2}{\sqrt{\xi-x^2}} \frac{\log{x}}{x^{3/2}} dx \ll \xi^{1/4}\log{\xi} \ll X^{1/2}\log{X}. \nonumber \end{equation}
For the case of $f_4$, we apply the mean value theorem for $K_s(u)$ (see equation (\ref{mvtforhalf2})). Then, we make use of equation (\ref{asy3}) and Lemma \ref{intozlem}(b) to show that equation (\ref{mvt3}) can be written as
$$d^{(1)}_{0}(f_4)=-\frac{\xi}{2\pi}\int_{0}^{\xi} \frac{x}{\sqrt{\xi^2-x^2}} \cdot {}_{3}F_{2}\left(1,\frac{3}{4},\frac{3}{4}; 2,\frac{3}{2};-x^2 \right) \, dx+O\left( \log{X}\right),$$
for some $\xi \in [r,R]$. \\
On the other hand, using equation (\ref{hintrans}), we have
$${}_{3}F_{2}\left(1,\frac{3}{4},\frac{3}{4}; 2,\frac{3}{2};-x^2 \right)=\int^{1}_{0}{}_{2}F_{1}\left(\frac{3}{4},\frac{3}{4}; \frac{3}{2} ; -x^2u \right) \, du.$$
Using equation (\ref{halfest}) this gives
$${}_{3}F_{2}\left(1,\frac{3}{4},\frac{3}{4}; 2,\frac{3}{2};-x^2 \right) \ll \frac{\log{x}}{x^{3/2}}.$$
Hence,
$$d^{(1)}_{0}(f_4) \ll \xi \int^{\xi}_{0} \frac{x\log{x}}{x^{3/2}\sqrt{\xi^2-x^2}} \, dx = \xi \int^{\xi}_{0} \frac{\log{x}}{\sqrt{x(\xi^2-x^2)}} \, dx \ll \xi \cdot \frac{\log{\xi}}{\xi^{1/2}}=\xi^{1/2}\log{\xi}. $$
It follows that $$d^{(1)}_{0}(f_4)=O(X^{1/2}\log{X}).$$
\end{proof}
We now proceed to provide asymptotics for the spectral coefficients corresponding to $s \in\left(1/2,1\right)$.
\begin{lemma} \label{lemmamterm} For fixed $s \in (1/2,1)$, we have
\begin{eqnarray}
&&d^{(1)}_t(f_3) = \frac{3-s}{2}\gamma_J(s)X^{1-s}+\frac{s+2}{2}\gamma_J(1-s)X^{s}+O(YX^{-s}+YX^{s-1}), \nonumber \\
&&d^{(1)}_{t}(f_4)=-(2-s)\gamma_K(s)X^{1-s}-(s+1)\gamma_K(1-s)X^s+O(X^{1/2}+YX^{-s}+YX^{s-1}). \nonumber
\end{eqnarray}

\end{lemma}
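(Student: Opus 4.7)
The plan is to combine the identities (\ref{mvt2}), (\ref{mvt3}) with the asymptotic expansions of $J_s$ and $K_s$ from Lemma \ref{intozlem} and the evaluation of $K_s(R,r)$ from Lemma \ref{ksrrlemma}, then apply the mean value theorem to the difference quotients and track how $r = \sqrt{X^2-1} = X + O(X^{-1})$ and $R = \sqrt{(X+Y)^2-1} = X + Y + O(X^{-1})$ propagate through the resulting expressions.

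For the $f_3$ case, I would write $J_s(v) = \gamma_J(s)\, H_1(v) + \gamma_J(1-s)\, H_2(v)$ with $H_1(v) = G_J(v,s)\, v^{(3-s)/2}$ and $H_2(v) = G_J(v,1-s)\, v^{(2+s)/2}$, which is just Lemma \ref{intozlem}(a). Applying the mean value theorem to (\ref{mvt2}) gives $\frac{J_s(R^2)-J_s(r^2)}{R^2-r^2} = \gamma_J(s)\, H_1'(\eta_1) + \gamma_J(1-s)\, H_2'(\eta_2)$ for suitable $\eta_i \in [r^2,R^2]$. Since $s$ is now fixed, the series definition of $G_J$ gives $G_J(v,s) = 1 + O(v^{-1})$ with $G_J'(v,s) = O(v^{-2})$, so differentiation under these bounds yields $H_1'(\eta) = \tfrac{3-s}{2}\eta^{(1-s)/2}(1 + O(\eta^{-1}))$ and $H_2'(\eta) = \tfrac{2+s}{2}\eta^{s/2}(1 + O(\eta^{-1}))$. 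Writing $\eta^{1/2} = X + O(Y)$ and expanding $(X+O(Y))^{1-s}$ and $(X+O(Y))^{s}$ produces the main terms together with the errors $O(YX^{-s})$ and $O(YX^{s-1})$ claimed in the lemma.

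For the $f_4$ case the argument is the same in spirit, but the bookkeeping is more delicate because Lemma \ref{intozlem}(b) contains additional $-\frac{u\log u}{\pi s(1-s)}$ and $C(s)u$ contributions. Applying the mean value theorem to the $\gamma_K$-pieces of $\frac{K_s(R)-K_s(r)}{R-r}$ reproduces, via the same mechanism as above, the main terms $-(2-s)\gamma_K(s)X^{1-s} - (s+1)\gamma_K(1-s)X^s$ together with the errors $O(YX^{-s})$ and $O(YX^{s-1})$. The key cancellations are then: the term $-\frac{1}{\pi s(1-s)} \cdot \frac{R\log R - r\log r}{R-r}$ coming from the $u\log u$ piece matches exactly the leading term of $K_s(R,r)$ obtained in Lemma \ref{ksrrlemma}; and the contribution $C(s)$ coming from the $C(s)u$ piece, together with the constant $-\frac{\log 2}{2\pi s(1-s)}$ from Lemma \ref{ksrrlemma}, are cancelled by the corresponding pieces in $-\frac{1}{\sqrt{2}}K_s(\sqrt{2})$ when the latter is expanded via Lemma \ref{intozlem}(b) at $u=\sqrt{2}$. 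What survives is bounded by $O(\log X)$, which is absorbed into the stated $O(X^{1/2})$ error.

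The only non-routine point is verifying that these exact cancellations work out simultaneously for all three sources (the $\log$-linear tail of $K_s$, the formula for $K_s(R,r)$, and the value $K_s(\sqrt{2})$); I expect this to be where the main care is needed, since $C(s)$ and the precise constants multiplying the $\log$ terms must line up. Once this is checked, the remaining ingredients—mean value theorem, $G_{J,K}(u,s) = 1 + O(u^{-1})$ for fixed $s$, and the expansions $r,R = X + O(Y)$—are straightforward and yield the stated error terms directly.
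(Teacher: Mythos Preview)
Your approach is correct and matches the paper's, which only writes out the $f_3$ case (approximate $G_J(u,s)=1+O(u^{-1})$, then apply the mean value theorem to the resulting difference quotient of powers) and declares the $f_4$ case ``similar.'' For $f_4$ you are working harder than necessary: since $s$ is \emph{fixed}, each of the extra pieces in (\ref{mvt3}) --- the $u\log u$ and $C(s)u$ tails of $K_s$, the constant $-\tfrac{1}{\sqrt 2}K_s(\sqrt 2)$, and $K_s(R,r)=\tfrac{8a\sqrt 2}{15\pi}\,{}_3F_2(\cdots;-2)$ with $a=O(\log X)$ --- is individually $O(\log X)$, so they drop into $O(X^{1/2})$ without the exact cancellations you track (those cancellations are genuine and are exactly what drives Lemma \ref{estimate2}, but Lemma \ref{ksrrlemma} as stated applies only to $s=1/2+it$ with $t\in\mathbb{R}\setminus\{0\}$, so invoking it here is technically out of range even though its proof extends).
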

\begin{proof}
    We prove the first equation. The second is similar.
    
    We note that, by the series definition of the hypergeometric function, $$G_J(u,s) =1+O(u^{-1}).$$
Therefore, equation (\ref{mvt2}) and Lemma \ref{intozlem}(a), give

$$d^{(1)}_t(f_3)=\gamma_J(s)\frac{R^{3-s}-r^{3-s} }{R^2-r^2}+\gamma_J(1-s)\frac{R^{s+2}-r^{s+2} }{R^2-r^2}
+O\left(Y^{-1}X^{-s}+Y^{-1}X^{s-1} \right).$$
Using the mean value theorem we note that, for a fixed real number $w$, there is some $\xi \in \left[r^2,R^2\right]$ with $$\frac{R^{w}-r^{w}}{R^2-r^2}=\frac{w}{2}\xi^{w/2-1}= wX^{w-2}+O\left(wYX^{w-4} \right). $$
The result follows.
\end{proof}
The same argument also gives the following result.
\begin{lemma} \label{medterms} For fixed real $t \neq 0$,
    $$d_t^{(1)}(f_3),d_t^{(1)}(f_4) \ll X^{1/2}.$$
\end{lemma}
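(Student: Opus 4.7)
The plan is to follow the proof of Lemma \ref{lemmamterm} essentially verbatim, tracking how the bounds behave when $s = 1/2+it$ with $t$ fixed and nonzero rather than $s \in (1/2,1)$ real. The only structural change is that the two main terms of the expansion in Lemma \ref{intozlem}, which previously contributed quantities of size $X^s$ and $X^{1-s}$ (of genuinely different orders for real $s$), now both have common modulus $X^{1/2}$. Since $Y = DX$ with $D \in (0,1)$ fixed, the error terms of size $YX^{-s}$ and $YX^{s-1}$ also have modulus $O(X^{1/2})$, so everything collapses to the claimed bound.

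For part (a), I would start from equation (\ref{mvt2}) and Lemma \ref{intozlem}(a), writing
\begin{equation*}
d_t^{(1)}(f_3) = \gamma_J(s)\,\frac{G_J(R^2,s)R^{3-s} - G_J(r^2,s)r^{3-s}}{R^2-r^2} + \gamma_J(1-s)\,\frac{G_J(R^2,1-s)R^{s+2} - G_J(r^2,1-s)r^{s+2}}{R^2-r^2}.
\end{equation*}
Because $t$ is fixed, the coefficients $\gamma_J(s), \gamma_J(1-s)$ are absolute constants, and the series definition of the hypergeometric function gives $G_J(u,s) = 1 + O(u^{-1})$ with implicit constant depending only on $t$. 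It then remains to estimate $(R^w - r^w)/(R^2-r^2)$ for the complex exponents $w = 3-s$ and $w = s+2$. Writing $R^w - r^w = w\int_r^R u^{w-1}\,du$ and taking moduli yields $|R^w - r^w| \ll X^{\mathrm{Re}(w)-1}Y$, so that $(R^w-r^w)/(R^2-r^2) \ll X^{\mathrm{Re}(w)-2} = X^{1/2}$ since $\mathrm{Re}(w) = 5/2$ in both cases. Collecting terms gives $d_t^{(1)}(f_3) \ll X^{1/2}$.

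For part (b), the argument is identical, based on equation (\ref{mvt3}) and Lemma \ref{intozlem}(b). The extra pieces arising from $-\tfrac{1}{\sqrt 2}K_s(\sqrt 2) + K_s(R,r)$ together with the logarithmic contribution $\frac{u\log u}{\pi s(1-s)}$ and the linear $C(s)u$ term in Lemma \ref{intozlem}(b) combine to something of order $O(\log X)$ by Lemma \ref{ksrrlemma} (the $C(s)$ pieces telescoping between $\sqrt 2$ and $\xi \in [r,R]$ and the $\log$ pieces matching up to the constant produced by $K_s(R,r)$), which is absorbed into $O(X^{1/2})$. I do not anticipate a real obstacle: the lemma is a direct transcription of the proof of Lemma \ref{lemmamterm}, relying only on the two observations that $|X^s| = X^{1/2}$ on the critical line and that all $s$-dependent auxiliary factors are bounded when $t$ is fixed.
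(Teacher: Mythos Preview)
Your proposal is correct and is exactly the argument the paper has in mind: the paper's ``proof'' of Lemma~\ref{medterms} is the single line ``The same argument also gives the following result,'' referring to Lemma~\ref{lemmamterm}, and you have carried out precisely that transcription, including the natural replacement of the mean value theorem by the integral bound $R^w-r^w=w\int_r^R u^{w-1}\,du$ to handle complex exponents. Your treatment of the $f_4$ case---cancelling the $C(s)$ contributions between $(K_s(R)-K_s(r))/(R-r)$ and $-\tfrac{1}{\sqrt 2}K_s(\sqrt 2)$, and matching the logarithmic pieces against Lemma~\ref{ksrrlemma}---is the same bookkeeping that appears (in more compressed form) in the proof of Lemma~\ref{estimate2}(b), so nothing is missing.
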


\section{Estimates of the Periods in Mean Square} \label{secperiods}
Huber\cite{huber} proved the following bound for the mean square of the periods $\hat{u}_{0,j}$.
\begin{lemma}[Huber, \cite{huber}]\label{lemmahuber} We have
$$\sum_{ t_j \leq T}{ \left| \hat{u}_{0,j} \right|^2}+\int_{-T}^{T} \left|\hat{E}_{\mathfrak{a},0}\left(\frac{1}{2}+it \right) \right|^2 dt \ll T.
$$
\end{lemma}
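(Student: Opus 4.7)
The plan is to apply the modified relative trace formula of Theorem \ref{modtrace}(a) with a real, non-negative test function $f_T$ tailored so that its spectral weight $d^{(0)}_t(f_T)$ is itself non-negative for all real $t$ and bounded below by a positive constant on $|t|\le T$. Under such a choice, the left-hand side of the lemma is directly controlled by the spectral side of (a) (after moving $E^{(a)}(f_T)$, which is precisely a negative of the Eisenstein integral we want to bound, over to the spectral side), so it suffices to show that the geometric side is $O(T)$.

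To construct $f_T$, I would start from an even, non-negative function $\phi$ on $\mathbb{R}$ with $\phi\equiv 1$ on $[-1,1]$ and $\phi$ of compact support, and set $h_T(t)=\phi(t/T)^2$, so $h_T\ge 0$ everywhere and $h_T\ge 1$ on $[-T,T]$. I would then invert the map $f\mapsto d^{(0)}_t(f)$, a one-dimensional hypergeometric analog of the Selberg transform made explicit by the substitution $x=\tan v$ in the formula for $d^{(0)}_t$, to produce $f_T$ with $d^{(0)}_t(f_T)=h_T(t)$, using standard Paley--Wiener-style inversion. Equivalently, one can take $f_T=\psi\ast\psi^{*}$ in the convolution algebra associated with the geodesic, which automatically yields $d^{(0)}_t(f_T)=|\widetilde{\psi}(t)|^2\ge 0$ and sidesteps an explicit inversion. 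In either construction, Paley--Wiener estimates give that $f_T$ is concentrated on a window of length $O(T^{-2})$ around $u=1$ with peak value $f_T(1)\asymp T$.

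On the geometric side of Theorem \ref{modtrace}(a), the identity contribution is $g_0^{+}=(1+\varepsilon)f_T(1)\,\mathrm{len}(l)\ll T$. For the non-identity double cosets, the localization of $f_T$ on a window of length $O(T^{-2})$ forces $g(B(\gamma)^2;f_T)$ to be negligible unless $|B(\gamma)|-1=O(T^{-1})$; by Theorem \ref{lekkasmain} applied in short-window form (or the trivial bound $N(1+\delta,l)-N(1,l)\ll \delta+1$), there are $O(1)$ such classes, and a direct computation using the definition \eqref{gdeff} of $g$ shows that each contributes $O(T)$. Hence the full geometric side is $O(T)$. Using the lower bound $d^{(0)}_{t_j}(f_T)\ge 1$ on the discrete spectral terms with $|t_j|\le T$, and the analogous lower bound $d^{(0)}_t(f_T)\ge 1$ on $[-T,T]$ inside the Eisenstein integral, we discard the (non-negative) tail of the spectrum and conclude the stated bound.

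The main technical obstacle is the construction of $f_T$: the spectral positivity of $d^{(0)}_t(f_T)$ on all of $\mathbb{R}$ and the geometric localization of $f_T$ on scale $T^{-2}$ pull in opposite directions. The convolution-square choice gives positivity for free but transfers the difficulty to verifying the decay and scaling of $f_T$ from the hypergeometric kernel $\,{}_2F_1(s/2,(1-s)/2;1/2;-\tan^2 v)\,$; the direct inversion route keeps the geometric side transparent but requires care with the Paley--Wiener theory for this non-standard transform. Either path leads to the conclusion, but the careful bookkeeping at the short geometric scale $T^{-1}$ is where the real work lies.
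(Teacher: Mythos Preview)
The paper does not prove this lemma; it is stated as a known result of Huber. However, the paper's proof of the weight-$1$ analogue (Lemma~\ref{periodslemma}) follows Huber's method and serves as a template for what Huber does: one applies Parseval's identity to the automorphic series $A^{(0)}_f$ (respectively $B_f$ in weight $1$), takes $f$ to be the indicator function $f_0 = \mathbbm{1}_{[1,\sec^2 v_0]}$, proves a pointwise lower bound on the spectral coefficient for $\lambda_j \leq X$ by an elementary ODE argument on $\cos^2 v \cdot F'' + \lambda F = 0$, and bounds $\lVert A^{(0)}_{f_0}\rVert^2_{L^2}$ directly by $O(X^{-1/2})$. The Eisenstein contribution is handled separately (cf.\ Lemma~\ref{eisensteinperiods}).

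Your approach via the relative trace formula (Theorem~\ref{modtrace}(a)) is genuinely different and is in principle workable, but it is considerably less elementary, and the gap you yourself flag is real. The Parseval route gives spectral positivity for free (one obtains $\sum_j |d^{(0)}_{t_j}(f)|^2 |\hat{u}_{0,j}|^2$, a sum of squares) and permits the simplest possible test function, an indicator; your route must manufacture an $f_T$ with $d^{(0)}_t(f_T) \geq 0$ on all of $\mathbb{R}$ and $\geq 1$ on $[-T,T]$. The convolution-square idea is the right instinct, but ``the convolution algebra associated with the geodesic'' is not a standard object here: you would first have to identify the transform $f \mapsto d^{(0)}_t(f)$ as a Jacobi/Mehler--Fock-type transform (via the identity relating ${}_2F_1(s/2,(1-s)/2;1/2;-x^2)$ to conical functions), establish its Paley--Wiener theory, and then read off the localization and scaling of $f_T$. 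None of this is done, and it is substantially more work than Huber's three-line ODE bound. Your geometric-side sketch is correct in outline (only finitely many double cosets have $|B(\gamma)|$ near $1$), but again the Parseval route replaces this entirely with a direct $L^2$ bound. In short: your strategy could be pushed through, but the paper's (Huber's) method is both simpler and complete as stated.
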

\begin{remark}
In fact, in \cite[Thm.1 p.2388]{tsuzuki}, Tsuzuki proved the following asymptotic:
\begin{equation*}
\sum_{ t_j \leq T}{ \left| \hat{u}_{0,j} \right|^2}+\int_{-T}^{T} \left|\hat{E}_{\mathfrak{a},0}\left(\frac{1}{2}+it \right) \right|^2 dt \sim \frac{\mathrm{len}(l)}{\pi} T.
\end{equation*}
\end{remark}
\begin{remark}
Lemma \ref{lemmahuber} is an important ingredient in the works of Huber \cite{huber}, Lekkas \cite{lekkas}, Chatzakos--Petridis \cite{chatzakos}, and Mckee \cite{mckee} .
\end{remark} \noindent
We will use a method similar to \cite{huber} to prove that the same estimate holds for $\hat{u}_{1,j}$ as well.

\begin{lemma} \label{periodslemma} We have
$$\sum_{ t_j \leq T}{ \left| \hat{u}_{1,j} \right|^2}+\int_{-T}^{T} \left|\hat{E}_{\mathfrak{a},1}\left(\frac{1}{2}+it \right) \right|^2 dt \ll T.$$
\end{lemma}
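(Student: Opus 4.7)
The plan is to follow Huber's argument for Lemma \ref{lemmahuber}, adapting it from weight $0$ to weight $1$ by working with the kernel $A^{(1)}_f$ in place of $A^{(0)}_f$. An unfolding argument analogous to the one in the proof of Lemma \ref{spside} yields the Parseval-type identity
\[\bigl\|A^{(1)}_f\bigr\|_{L^2(\Gamma\backslash\H)}^2 = 4\sum_j \lambda_j\,\bigl|d^{(1)}_{t_j}(f)\bigr|^2 \hat u_{1,j}^2 + \sum_{\mathfrak a}\frac{1}{2\pi}\int_{-\infty}^{\infty}\!\bigl(\tfrac14+t^2\bigr)\bigl|d^{(1)}_t(f)\bigr|^2\,\bigl|\hat E_{\mathfrak a,1}(\tfrac12+it)\bigr|^2\,dt,\]
so a uniform lower bound $|d^{(1)}_t(f_T)|\gg c_T$ for $|t|\le T$, combined with an upper bound $\|A^{(1)}_{f_T}\|_{L^2}^2\ll T\cdot c_T^{2}$, delivers the stated mean-square estimate after dropping the factor $\lambda_j\ge\tfrac14$ (with finitely many exceptional eigenvalues treated separately).

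For the test function I would take a non-negative bump localized near $x=1$, for instance $f_T(x)=\phi(T^{2}(x-1))$ with $\phi\ge 0$ a fixed smooth function of compact support. On this support the integration range in (\ref{d1def}) is restricted to $v\in[0,O(T^{-1})]$, a regime in which $|t\tan v|=O(1)$ uniformly for $|t|\le T$, so the series expansion of the hypergeometric kernel (together with the estimates of Appendix \ref{spapp}) shows that $d^{(1)}_t(f_T)$ is strictly positive and bounded below by some $c_T\gg T^{-\alpha}$ of explicit order. The $L^{2}$-norm is then controlled by unfolding one copy of $A^{(1)}_{f_T}$:
\[\bigl\|A^{(1)}_{f_T}\bigr\|_{L^{2}}^{2} = \int_{\Gamma_{1}\backslash\H}\!A^{(1)}_{f_T}(z)\,\tan v(z)\,f_T\!\left(\tfrac{1}{\cos^{2} v(z)}\right)d\mu(z),\]
and expanding the remaining sum over $\Gamma_1\backslash\Gamma$. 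The identity coset contributes a diagonal term computed directly from the support of $f_T$, while each non-identity coset enforces $|B(\gamma)|\le 1+O(T^{-2})$ via the support of $f_T$, and the number of such double cosets, together with the size of $g$, is controlled by the local counting estimate furnished by Theorem \ref{lekkasmain}.

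The main obstacle is the simultaneous production of non-negativity and a quantitatively sharp lower bound for $d^{(1)}_t(f_T)$ uniformly in $|t|\le T$: the hypergeometric kernel in (\ref{d1def}) is sign-indefinite once $|t\tan v|$ leaves the near-origin regime, so the support scale of $f_T$ must be tuned carefully (as in Huber's original construction; alternatively one may take $f_T$ as a convolution square to enforce positivity of the transform). The Eisenstein contribution requires no new ingredients, since the same transform $d^{(1)}_t$ appears on the critical line $s=\tfrac12+it$ and admits the same lower bound for $|t|\le T$.
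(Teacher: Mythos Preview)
Your approach via $A^{(1)}_f$ is different from the paper's and contains a quantitative gap. With your localization $f_T(x)=\phi(T^{2}(x-1))$, the support restricts $v$ to $[0,cT^{-1}]$. On that interval the hypergeometric factor in $d^{(1)}_t$ is indeed $\asymp 1$ for $|t|\le T$, but the weight $\tan^{2}v$ forces
\[
c_T\;=\;d^{(1)}_t(f_T)\;\asymp\;\int_0^{cT^{-1}} v^{2}\,dv\;\asymp\;T^{-3},
\]
while the diagonal term after unfolding gives
\[
\bigl\|A^{(1)}_{f_T}\bigr\|_{L^{2}}^{2}\;\asymp\;\mathrm{len}(l)\int_0^{cT^{-1}}\frac{\tan^{2}v}{\cos^{2}v}\,dv\;\asymp\;T^{-3}.
\]
Thus $\|A^{(1)}_{f_T}\|^{2}/c_T^{2}\asymp T^{3}$, not $T$, and ``dropping the factor $\lambda_j\ge\tfrac14$'' yields only $\sum_{t_j\le T}|\hat u_{1,j}|^{2}\ll T^{3}$. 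In fact a Cauchy--Schwarz on $d^{(1)}_t(f)=\int v\cdot(vf)\,dv$ shows that no nonnegative $f$ supported on $v\in[0,v_0]$ with $v_0\ll T^{-1}$ can satisfy $\|A^{(1)}_f\|^{2}\ll T\,c_T^{2}$. Your scheme can be rescued, but only by \emph{keeping} the factor $\lambda_j$: one obtains $\sum_{t_j\le T}\lambda_j|\hat u_{1,j}|^{2}\ll T^{3}$, and then a dyadic decomposition in $t_j$ using $\lambda_j\asymp t_j^{2}$ gives $\sum_{T/2<t_j\le T}|\hat u_{1,j}|^{2}\ll T$ and hence the lemma.

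The paper avoids this detour entirely by working with a \emph{weight-$1$} kernel
\[
B_f(z):=\sum_{\gamma\in\Gamma_1\backslash\Gamma} j_\gamma(z)^{-2}\,f\!\left(\frac{1}{\cos^{2}v(\gamma z)}\right)\in\mathfrak h_1,
\]
whose spectral coefficients have the form $\hat u_{1,j}\,d_{1,t_j}(f)-i\,\hat u_{0,j}\,d_{2,t_j}(f)$ with
\[
d_{1,t}(f)=2\int_0^{\pi/2}\frac{1}{\cos v}\,f\!\left(\frac{1}{\cos^{2}v}\right)\Phi^{e}(v)\,dv,
\]
and $\Phi^{e}$ the even solution of $\cos^{2}v\,(V''+V)+\lambda V=0$ with $\Phi^{e}(0)=1$. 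The crucial point is the \emph{absence} of the $\tan^{2}v$ factor: an ODE argument gives $\Phi^{e}(v)\ge 1-\tfrac12(\lambda\tan^{2}v+v^{2})$, so with $f_0=\mathbbm 1_{[1,\sec^{2}v_0]}$ and $v_0\asymp T^{-1}$ one has $d_{1,t}(f_0)\gg T^{-1}$ directly, while $\|B_{f_0}\|^{2}\le\|A^{(0)}_{f_0}\|^{2}\ll T^{-1}$ by Huber. Since the real and imaginary parts separate, Parseval gives $\sum_{t_j\le T}|\hat u_{1,j}|^{2}\ll T$ in one step, no dyadic argument needed.
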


\begin{remark}
Notice that, by Cauchy--Schwarz inequality, we can deduce that
\begin{equation*}\sum_{ \lambda_j \leq X} \hat{u}_{1,j}\hat{u}_{0,j} \ll \left(\sum_{\lambda_j \leq X}{ \left| \hat{u}_{0,j} \right|^2}\right)^{1/2}\cdot \left(\sum_{\lambda_j \leq X}{ \left| \hat{u}_{1,j} \right|^2}\right)^{1/2} \ll X^{1/2}. \end{equation*}
\end{remark}
\begin{remark} Once we have the estimates for the mean square of $\hat{u}_{0,j}$ and $\hat{u}_{1,j}$ as above, extending to higher weights is straightforward. Indeed, as demonstrated in \cite[Eq.1]{voskoulekkas}, it is easy to show the relations
\begin{align*}  \hat{u}_{m+2,j}&=-\sqrt{\frac{m^2+m+\lambda_j}{m^2+3m+2+\lambda_j}}\hat{u}_{m,j}, \\ \hat{E}_{\mathfrak{a}, m+2}(s)&=-\sqrt{\frac{m^2+m+s(1-s)}{m^2+3m+2+s(1-s)}} \hat{E}_{\mathfrak{a}, m}(s).\end{align*} 
Hence, we can deduce that, for every fixed $m$,
$$\sum_{ t_j \leq T}{ \left| \hat{u}_{m,j} \right|^2}+\int_{-T}^{T} \left|\hat{E}_{\mathfrak{a},m}\left(\frac{1}{2}+it \right) \right|^2 dt \ll T.
$$
\end{remark}
We define 
$$B_f(z):=\sum_{\gamma \in \Gamma_1 \backslash \Gamma}\left(j_{\gamma}(z)\right)^{-2} \cdot f\left( \frac{1}{ \cos^2\left( v\left( \gamma  z \right) \right)} \right),$$ 
where $j_{\gamma}(x)$ is as in equation (\ref{jfactor}).
Note that, for any $\gamma_0 \in \Gamma$, $$j_{\gamma}(\gamma_0 z)=j_{\gamma \gamma_0}(z) / j_{\gamma_0}(z).$$
Therefore,
\begin{eqnarray*}B_f(\gamma_0 z ) &=&
\sum_{\gamma \in \Gamma_1 \backslash \Gamma}\left(j_{\gamma}(\gamma_0 z)\right)^{-2} \cdot f\left( \frac{1}{ \cos^2{\left( v\left( \gamma \gamma_0  z \right) \right)}}  \right)\nonumber \\
&=& j^2_{\gamma_0}(z) \cdot B_f(z). \nonumber\end{eqnarray*}
We deduce that $B_f$ is an element of $\mathfrak{h}_1$. 

 In a similar manner with Lemma \ref{spside}, we consider the spectral expansion of $B_f(z)$. For simplicity, we assume that $\Gamma$ is cocompact.
 \begin{lemma} \label{bfexp}
     For $f$ a continuous, piecewise differentiable function with exponential decay, we have
$$B_f(z)=\sum_{j} c_{j}(f)u_{1,j}(z),$$
where \begin{equation} \label{cif} c_i(f)=\hat{u}_{1,i}d_{1,t_i}(f)-i\hat{u}_{0,i}d_{2,t_i}(f), \end{equation}
and the real transforms $d_{1,t_i}(f)$ and $ d_{2,t_i}(f)$ are defined via \begin{alignat*}{2}
    &d_{1,t_i}(f) & {}:={}& 2\int_{0}^{\pi/2} \frac{1}{\cos{v}} f\left( \frac{1}{ \cos^2{v}} \right) \Phi^{e}_i(v) \, dv, \\
    &d_{2,t_i}(f) & {}:={}& 2\sqrt{\lambda_j}\int_{0}^{\pi/2} \frac{\sin{v}}{\cos^2{v}} f\left( \frac{1}{ \cos^2{v}} \right) \Phi^{o}_i(v) \, dv.
\end{alignat*}
Here, $\Phi^{o,e}_i(v)$ are the solutions of the differential equation
$$\cos^2{v} \left( \frac{d^2V_i}{d v^2} +V_i \right)+\lambda_iV_i=0 $$
that satisfy the initial conditions
$$\Phi^o_i(0)=0, \; \left(\Phi^o_i\right)'(0)=1, \quad \hbox{and} \quad \Phi^e_i(0)=1, \; \left(\Phi^e_i\right)'(0)=0.$$
 \end{lemma}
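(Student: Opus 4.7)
I would mirror the argument of Lemma \ref{spside}: expand $B_f$ in the weight\hyp{}one eigenbasis as $B_f(z) = \sum_j c_j(f) u_{1,j}(z)$ and compute the coefficients as inner products $c_j(f) = \langle B_f, u_{1,j}\rangle$. Each $j_\gamma(z)$ has modulus one, so after the substitution $w = \gamma z$ the factor $j_\gamma^{-2}(z)\,\overline{u_{1,j}(z)}$ reduces, via the weight\hyp{}two transformation law $u_{1,j}(\gamma^{-1} w) = j_\gamma(\gamma^{-1} w)^{-2}u_{1,j}(w)$, to $\overline{u_{1,j}(w)}$. Unfolding thus gives
\[
c_j(f) = \int_{\Gamma_1\backslash\mathbb{H}} f\!\left(\tfrac{1}{\cos^{2} v(w)}\right)\overline{u_{1,j}(w)}\,d\mu(w).
\]

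Working in Huber coordinates with fundamental domain $\{(u,v): 0<u<\mathrm{len}(l),\; -\pi/2<v<\pi/2\}$ and measure $d\mu = \cos^{-2}(v)\,du\,dv$, I define the period function
\[
W_j(v) := \int_{0}^{\mathrm{len}(l)} u_{1,j}(ie^{u+iv})\,du.
\]
Translating (\ref{dms}) to Huber coordinates via (\ref{rop}) produces $D_1 = \cos^2 v\,(\partial_u^2+\partial_v^2) + 2i\sin v\cos v\,\partial_u + 2i\cos^2 v\,\partial_v$; integrating the eigenvalue equation $D_1 u_{1,j} = -\lambda_j u_{1,j}$ against the periodic variable $u$ annihilates the two $\partial_u$\hyp{}terms and leaves $W_j''+2iW_j'+\lambda_j\cos^{-2}(v)W_j = 0$. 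The substitution $W_j(v) = e^{-iv}V_j(v)$ removes the first\hyp{}order part and yields precisely the ODE $\cos^2 v\,(V_j''+V_j)+\lambda_j V_j = 0$ of the statement.

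The main computation is the pair of initial conditions for $V_j$. Applying $u_{1,j} = (i/\sqrt{\lambda_j})K_0 u_{0,j}$ with formula (\ref{rop}) for $K_0$ and integrating in $u$ using periodicity yields the compact identity
\[
W_j(v) = \lambda_j^{-1/2}\, e^{-iv}\cos v \, W_0'(v), \qquad W_0(v):=\int_0^{\mathrm{len}(l)} u_{0,j}(ie^{u+iv})\,du.
\]
Setting $v=0$ gives $V_j(0) = W_j(0) = \hat{u}_{1,j}$ by (\ref{u0tou1}). Differentiating the above identity and then evaluating at $v=0$, using the weight\hyp{}zero ODE $W_0'' = -\lambda_j\cos^{-2}(v)W_0$ from the proof of Lemma \ref{spside} at $v=0$, gives $W_j'(0) = -i\hat{u}_{1,j}-\sqrt{\lambda_j}\hat{u}_{0,j}$, hence $V_j'(0) = iW_j(0)+W_j'(0) = -\sqrt{\lambda_j}\,\hat{u}_{0,j}$. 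With the normalizations of $\Phi_j^e,\Phi_j^o$ in the statement, this forces $V_j = \hat{u}_{1,j}\Phi_j^e-\sqrt{\lambda_j}\,\hat{u}_{0,j}\,\Phi_j^o$.

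Finally, $V_j$ is real (real ODE, real initial data), so $\overline{W_j(v)} = e^{iv}V_j(v)$. Writing $e^{iv} = \cos v + i\sin v$ and using the parities of $\Phi_j^e$ (even), $\Phi_j^o$ (odd), $\cos v$ (even), $\sin v$ (odd) against the even weight $\cos^{-2}(v)f(\cos^{-2} v)$ kills two of the four resulting terms; the two survivors fold onto $[0,\pi/2]$ and give exactly
\[
c_j(f) = \hat{u}_{1,j}\,d_{1,t_j}(f) - i\hat{u}_{0,j}\,d_{2,t_j}(f),
\]
as claimed. The only delicate step is the derivation of $W_j'(0)$, which requires combining the raising\hyp{}operator representation of $u_{1,j}$ with the weight\hyp{}zero ODE; everything else is a careful bookkeeping of conventions.
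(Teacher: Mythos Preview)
Your proof is correct and follows essentially the same route as the paper: spectral expansion in $\mathfrak{h}_1$, unfolding to $\Gamma_1\backslash\mathbb{H}$, passage to Huber coordinates, the substitution $W_j=e^{-iv}V_j$ to obtain the real second\hyp{}order ODE, and identification of the initial data with the periods $\hat u_{0,j},\hat u_{1,j}$. The only noteworthy difference is that you compute $V_j'(0)$ by pushing the raising operator $K_0$ through the $u$\hyp{}integral and invoking the weight\hyp{}zero ODE $W_0''=-\lambda_j\sec^2\!v\,W_0$, whereas the paper goes the other way, expressing $u_{0,j}$ via the lowering operator $\overline{K_{-1}}$ applied to $u_{1,j}$; your variant is slightly more economical since it reuses equation~(\ref{u0tou1}) and the ODE already established in Lemma~\ref{spside}.
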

 \begin{proof}
  By the Spectral Theorem for $\mathfrak{h}_1$ (see \cite{roelcke}), we have that, for some coefficients $c_{j}(f)$,
  $$B_f(z)=\sum_{j} c_{j}(f)u_{1,j}(z).$$
Fixing a particular $u_{1,i}=u_i$, to calculate $c_i(f)$, we multiply both sides by $\bar{u}_i(z)$ and we proceed to integrate over the fundamental region of $\Gamma$. This gives
  \begin{align*}c_i(f) &=  \int_{-\pi/2}^{\pi/2} \frac{1}{\cos^2{v}} f\left( \frac{1}{ \cos^2\left( v \right)} \right)\bar{U}_i(v) \, dv  \\  &= \int_{0}^{\pi/2} \frac{1}{\cos^2{v}} f\left( \frac{1}{ \cos^2\left( v \right)} \right)\left(\bar{U}_i(v)+\bar{U}_i(-v) \right) \, dv,  \end{align*}
where $$U_i(v):= \int_{0}^{\mathrm{len}(l)} u_i(z) \, du. $$
 Note that $$\hat{u}_{1,i}=U_i(0),$$ and, using equation (\ref{rop}), \begin{align*}\hat{u}_{0,i}=  \int_{0}^{\mathrm{len}(l)}\left. u_{0,i}\right\vert_{v=0} \, du &=\frac{-i}{\sqrt{\lambda_i}}\int_{0}^{\mathrm{len}(l)} \left. \overline{K_{-1}}u_{1,i}\right\vert_{v=0} \, du \\  &= \frac{-i}{\sqrt{\lambda_i}}\int_{0}^{\mathrm{len}(l)} \left. \left( e^{iv}\cos{v}  \left(\frac{\partial}{\partial u}+i\frac{\partial}{\partial v} \right)-1 \right)u_{1,i} \right\vert_{v=0} du  \\
 &=\frac{U'_i(0)+iU_i(0)}{\sqrt{\lambda_i}}.
 \end{align*}
 Here, we used the fact that $$u_{0,j}(z)=\frac{-i}{\sqrt{\lambda_j}}\overline{K_{-1}}u_{1,j}(z),$$ which follows from \cite[Eq.(3),(8)]{fay}.

 Recall now that $u_i(z)$ satisfies the equation $\left(D_1+\lambda_i \right)u_i(z)=0$ for some eigenvalue $\lambda_i$, where $D_1$ is the Laplacian in $\mathfrak{h}_1$ (see equation (\ref{dms})).
 By direct computation, it is easy to see that, in Huber coordinates, $D_1$ takes the form
 \begin{equation*}  D_1=\cos^2{v} \left ( \frac{\partial ^2}{\partial u^2} + \frac{\partial^2}{\partial v^2} \right)+2i\cos^2{v}\frac{\partial}{\partial v} +i\sin{2v}\frac{\partial}{\partial u}. \end{equation*}
 Hence, we have that
 $$\cos^2{v} \left ( \frac{\partial ^2 u_i}{\partial u^2} + \frac{\partial^2u_i}{\partial v^2} \right)+2i\cos^2{v}\frac{\partial u_i}{\partial v} +i\sin{2v}\frac{\partial u_i}{\partial u}+\lambda_i u_i=0.$$
 Integrating for $u$ in $\left[0,\mathrm{len}(l)\right]$ and using the periodicity of $u_i$ in the $u$ variable, we get 
 $$\cos^2{v} \left( \frac{d^2U_i}{d v^2} +2i\frac{dU_i}{d v} \right)+\lambda_iU_i=0. $$
 Now write $U_i= e^{-iv}V_i$.
 Note that $$\frac{dU_i}{dv}=e^{-iv}\left(V'_i-iV_i \right), \quad \hbox{and} \quad \frac{d^2U_i}{dv^2}=e^{-iv}\left(V''_i-2iV'_i-V_i \right). $$
 Therefore, the differential equation becomes $$\cos^2{v} \left( \frac{d^2V_i}{d v^2} +V_i \right)+\lambda_iV_i=0. $$
 Clearly, if $V_i(v)$ is a solution of the equation, then so is $V_i(-v)$. We set \begin{equation}\label{phiedef} \Phi^{e}_i(v)=\frac{1}{2V_i(0)} \left(V_i(v)+V_i(-v) \right), \quad \Phi^{o}_i(v)=\frac{1}{2V'_i(0)} \left(V_i(v)-V_i(-v) \right),  \end{equation}
 so that $$V_i(v)=V_i(0)\Phi^{e}_i(v)+V'_i(0)\Phi^{o}_i(v),$$ and $\Phi^{o,e}_i(v)$ satisfy the initial conditions $$\Phi^o_i(0)=0, \; \left(\Phi^o_i\right)'(0)=1, \quad \hbox{and} \quad \Phi^e_i(0)=1, \; \left(\Phi^e_i\right)'(0)=0.$$
 By uniqueness, the functions $\Phi^{o,e}_i$ are both real. By definition, we can see that $$V_i(0)=U_i(0)=\hat{u}_{1,i}, \quad 
 \hbox{and} \quad V'_i(0)=U'_i(0)+iU_i(0)=\sqrt{\lambda_i} \hat{u}_{0,i}.$$
 Combining the above, we have $$U_i(v)+U_i(-v)=2\hat{u}_{1,i}\cos{v}\cdot\Phi^{e}_i(v)-2i\hat{u}_{0,i}\sqrt{\lambda_i}\sin{v} \cdot\Phi^{o}_i(v).$$
Hence,
$$c_i(f)=\hat{u}_{1,i}d_{1,t_i}(f)-i\hat{u}_{0,i}d_{2,t_i}(f),$$
where \begin{alignat*}{2}
    &d_{1,t_i}(f) & {}:={}& 2\int_{0}^{\pi/2} \frac{1}{\cos{v}} f\left( \frac{1}{ \cos^2{v}} \right) \Phi^{e}_i(v) \, dv, \\
    &d_{2,t_i}(f) & {}:={}& 2\sqrt{\lambda_j}\int_{0}^{\pi/2} \frac{\sin{v}}{\cos^2{v}} f\left( \frac{1}{ \cos^2{v}} \right) \Phi^{o}_i(v) \, dv.
\end{alignat*}
\end{proof}
In particular, as both the periods $\hat{u}_{0,i},\hat{u}_{1,i}$ and the transforms $d_{1,t_i}(f)$, $d_{2,t_i}(f)$ are real, via equation (\ref{cif}), we have that
$$ \left| c_i(f) \right|^2=\left| \hat{u}_{1,i}d_{1,t_i}(f) \right|^2+\left| \hat{u}_{0,i}d_{2,t_i}(f) \right|^2.$$
Recall now Perseval's identity:
\begin{equation} \label{parsevalid} \sum_{i}{\left|c_i(f)\right|^2}=\int_{ \Gamma \backslash \mathbb{H}} |B_f(z)|^2 d \mu(z). \end{equation}
Below, we will use this, for an appropriate choice of $f$, to find an upper bound for the periods. In particular, for a fixed $v_0 \in [0,\pi/2]$, we choose $f=f_0$ to be the indicator function of  $[1,\sec^2{v_0}]$ (as in \cite[Eq.(78)]{huber}).
\subsection{The Proof of Lemma \ref{periodslemma}}
We proceed with a similar method as \cite[Appendix 4.1,4.2]
{huber}.

Recall that the function $\Phi_i^{e}$ defined in equation (\ref{phiedef}) is a solution of the equation
$$\cos^2{v}\frac{d^2F}{dv^2}+\left(\lambda_i+\cos^2{v}\right)F=0, \; \; F(0)=1, \; F'(0)=0.$$
We multiply by $F'$ to get the following:

$$\frac{\cos^2{v}}{\lambda_i+\cos^2{v}}\frac{d}{dv}\left(F'(v)\right)^2=-\frac{d}{dv}\left(F(v)\right)^2.$$
\\
Now we integrate in some subinterval of $[0,\pi/2]$, say $[0,v']$. We have
\begin{eqnarray}-F^2\left( v'\right)+F^2(0)&=&\int_{0}^{v'}\frac{\cos^2{v}}{\lambda_i+\cos^2{v}}\frac{d}{dv}\left(F'(v)\right)^2 dv \nonumber \\ &=&\frac{\cos^2{v'}}{\lambda_i+\cos^2{v'}}\left(F'(v')\right)^2+\int_{0}^{v'}\frac{\lambda_i \sin{2v}}{\left(\lambda_i+\cos^2{v}\right)^2}\left(F'(v)\right)^2 dv \geq 0  \nonumber \end{eqnarray}
Hence, for every $v$ in $[0,\pi/2]$, we have
$1 \geq F^2(v)$. Therefore, $$  F''(v)=-\frac{\lambda_i+\cos^2v}{\cos^2{v}}F(v) \geq -\frac{\lambda_i+\cos^2v}{\cos^2{v}}. $$
We integrate twice to get
 \begin{equation}F(v) \geq 1+\lambda_i \log{\left(\cos{v} \right)}-\frac{1}{2}v^2 \geq 1-\frac{1}{2}\left(\lambda_i \tan^2{v}+v^2\right). \nonumber \end{equation}
 Hence, we have
 \begin{eqnarray}
 d_{1,t_i}(f_0)
 &=&2\int_{0}^{v_0} \frac{1}{\cos{v}}  \Phi_i^{e}(v) \, dv \nonumber \\  
  &\geq& 2\cos{v_0}\int_{0}^{v_0} \frac{1}{\cos^2{v}}  \Phi_i^{e}(v) \, dv \nonumber \\
 &\geq& 2\cos{v_0}\int_{0}^{v_0} \frac{1-\left(\lambda_i\tan^2{v}+v^2\right)/2}{\cos^2{v}}  \, dv\nonumber \\
 &=& \cos{v_0}\left(2\tan{v_0}-\frac{\lambda_i}{3}\tan^3{v_0} -\int_{0}^{v_0} \frac{v^2}{\cos^2{v}}  \, dv \right) \nonumber \\
 & \geq &  \cos{v_0}\left(\left(2-v_0^2\right)\tan{v_0}-\frac{\lambda_i}{3}\tan^3{v_0} \right). \nonumber
 \end{eqnarray}
If we take $\lambda_i \leq 2/\tan^2{v_0}=:X$ we can easily deduce that, for small $v_0$,

$$d_{1,t_i}(f_0)   \geq \tan{v_0}=\sqrt{2/X}.$$
Therefore, it follows that
$$\sum_{i}{\left|c_i(f_0)\right|^2} \geq \sum_{\lambda_i \leq X}{\left|c_i(f_0)\right|^2} \geq \sum_{\lambda_i \leq X}{ \left| \hat{u}_{1,i}d_{1,t_i}(f_0) \right|^2}  \geq \frac{2}{X} \sum_{\lambda_i \leq X}{ \left| \hat{u}_{1,i} \right|^2} .$$

Note now that $|B_{f_0}(z)| \leq |A^{(0)}_{f_0}(z)|$, so by \cite[Eq. 61]{huber} we have

$$\int_{\mathbb{H}/\Gamma} |B_{f_0}(z)|^2 d \mu(z) \leq \int_{\mathbb{H}/\Gamma} |A^{(0)}_{f_0}(z)|^2 d \mu(z) \ll X^{-1/2}. $$

We conclude the proof of Lemma \ref{periodslemma} for the cocompact case by combining the last two inequalities with equation (\ref{parsevalid}). For the general cofinite case, we also need the following estimates for the Eisenstein periods.
\begin{lemma} \label{eisensteinperiods}
   As $T \rightarrow +\infty$, we have the following bounds:
   \begin{align}
    \int_{-T}^{T} \left|\hat{E}_{\mathfrak{a},0}\left(\frac{1}{2}+it \right) \right|^2 dt \ll T, \label{eperiodbound1} \\
    \int_{-T}^{T} \left|\hat{E}_{\mathfrak{a},1}\left(\frac{1}{2}+it \right) \right|^2 dt \ll T. \label{eperiodbound2}
   \end{align}
\end{lemma}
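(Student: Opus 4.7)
The plan is to handle the two bounds in parallel with the arguments already given in the excerpt. Inequality \eqref{eperiodbound1} is essentially contained in Huber's Lemma \ref{lemmahuber}: that statement gives an $\ll T$ bound for the \emph{sum} of the discrete period contribution plus the Eisenstein integral, and since both terms on the left-hand side are nonnegative, dropping the discrete part yields \eqref{eperiodbound1} at once. So nothing new is needed here beyond invoking Huber, and the work is concentrated on \eqref{eperiodbound2}.

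For \eqref{eperiodbound2} the plan is to extend the cocompact proof of Lemma \ref{periodslemma} (Section \ref{secperiods}) to the general cofinite setting. Concretely, I would first promote the spectral expansion of $B_f(z)\in\mathfrak{h}_1$ given in Lemma \ref{bfexp} to include the continuous spectrum, so that
$$B_f(z)=\sum_j c_j(f)\,u_{1,j}(z)+\sum_{\mathfrak{a}}\frac{1}{4\pi}\int_{(1/2)}c_{\mathfrak{a}}(f,s)\,E_{\mathfrak{a},1}(z,s)\,ds,$$
where the continuous coefficients $c_{\mathfrak{a}}(f,s)$ are obtained by exactly the same unfolding argument and Huber-coordinate computation as in the proof of Lemma \ref{bfexp}, with $u_{0,j}, u_{1,j}, \lambda_j$ replaced by $E_{\mathfrak{a},0}(z,s), E_{\mathfrak{a},1}(z,s), s(1-s)$. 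Because $E_{\mathfrak{a},0}$ is real on the critical line in the sense that the periods are real (just as for the $\hat u_{1,j}$), the same Wronskian/symmetrisation trick that produced $V_i=V_i(0)\Phi^e_i+V'_i(0)\Phi^o_i$ carries over verbatim and gives
$$|c_{\mathfrak{a}}(f,1/2+it)|^2=|\hat{E}_{\mathfrak{a},1}(1/2+it)|^2\,|d_{1,t}(f)|^2+|\hat{E}_{\mathfrak{a},0}(1/2+it)|^2\,|d_{2,t}(f)|^2.$$

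Next I would apply Parseval to the cofinite expansion of $B_{f_0}$, for $f_0$ the indicator of $[1,\sec^2 v_0]$, and use the pointwise bound $|B_{f_0}(z)|\le |A^{(0)}_{f_0}(z)|$ together with Huber's $L^2$ estimate \cite[Eq.~61]{huber} to get
$$\sum_j|c_j(f_0)|^2+\sum_{\mathfrak{a}}\frac{1}{4\pi}\int_{(1/2)}|c_{\mathfrak{a}}(f_0,s)|^2\,ds\ll X^{-1/2}.$$
Discarding the discrete part and using the nonnegative $|\hat{E}_{\mathfrak{a},0}|^2$ term, the continuous contribution dominates $\int_{-T}^T|d_{1,t}(f_0)|^2\,|\hat{E}_{\mathfrak{a},1}(1/2+it)|^2\,dt$. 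The key point is then that the lower bound $d_{1,t_i}(f_0)\gg X^{-1/2}$ proved in Section \ref{secperiods} was derived from the ODE satisfied by $\Phi_i^e$ and depended only on the eigenvalue parameter $\lambda=s(1-s)$ being at most $X$; the same ODE analysis applies with $\lambda=1/4+t^2$, so for $|t|\le T$ with $T\ll X^{1/2}$ we get the same lower bound. Choosing $v_0$ so that $X=2/\tan^2 v_0\asymp T^2$ and combining gives
$$\frac{2}{X}\int_{-T}^T|\hat{E}_{\mathfrak{a},1}(1/2+it)|^2\,dt\ll X^{-1/2},$$
which rearranges to \eqref{eperiodbound2}.

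The main obstacle I anticipate is purely bookkeeping, not analytic: writing down the continuous-spectrum companion to Lemma \ref{bfexp} and checking that the real-valuedness arguments used to split $|c_i(f)|^2$ into the two period contributions go through for Eisenstein series. Once this is in place, the Huber-style lower bound on $d_{1,t}(f_0)$ and the $L^2$ estimate for $A^{(0)}_{f_0}$ are inherited directly, so no new analytic input is required.
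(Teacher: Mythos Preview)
Your strategy matches the paper's: for \eqref{eperiodbound1} the paper cites \cite[Lemma~4.3]{chatzakos}, and for \eqref{eperiodbound2} it says to run the same argument with $B_f$ in place of $A^{(0)}_f$, which is exactly what you propose. One small point on \eqref{eperiodbound1}: deducing it from Lemma~\ref{lemmahuber} is mildly circular, since Huber worked in the cocompact setting and the Eisenstein contribution to Lemma~\ref{lemmahuber} as stated here is precisely \eqref{eperiodbound1} itself (proved in \cite{chatzakos}).

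The step that needs more care than you suggest is the claimed identity $|c_{\mathfrak{a}}(f,s)|^2=|\hat{E}_{\mathfrak{a},1}|^2 d_{1,t}^2+|\hat{E}_{\mathfrak{a},0}|^2 d_{2,t}^2$. In the discrete case this relies on both $\hat u_{0,j}$ and $\hat u_{1,j}$ being real, which in turn uses that $u_{0,j}$ is real-valued (see the remark after Theorem~\ref{modtrace}). The Eisenstein series $E_{\mathfrak{a},0}(z,1/2+it)$ is not real-valued, so neither period is real in general, and the cross term $2d_{1,t}d_{2,t}\,\mathrm{Im}\bigl(\overline{\hat E_{\mathfrak{a},1}}\,\hat E_{\mathfrak{a},0}\bigr)$ need not vanish. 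This is not fatal: one can instead use $|c_{\mathfrak{a}}|^2\ge \tfrac12|\hat E_{\mathfrak{a},1}|^2 d_{1,t}^2-|\hat E_{\mathfrak{a},0}|^2 d_{2,t}^2$, bound $d_{2,t}(f_0)$ crudely, and absorb the subtracted term via the already-established \eqref{eperiodbound1}. But that is a genuine extra step rather than bookkeeping, and the paper's one-line proof glosses over it as well.
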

\begin{proof}
    For the first part, see \cite[Lemma 4.3]{chatzakos}. The second part follows similarly, using the series $B_f(z)$ instead of $A^{(0)}_f(z)$.
\end{proof}

\section{Proof of Theorem \ref{equivmain}}
\label{mainthmsection}
We demonstrate the case of $N_4(X,l)$. The case of $N_3(X,l)$ is similar.

From Theorem \ref{modtrace} and equation (\ref{countingsub}), we have
$$N_4(X,l)=2\sum_{j} \lambda_jd^{(1)}_{t_j}(f)\hat{u}^2_{1,j}+O\left(Y+X^{2/3}\right).$$
Using Lemma \ref{halflemma} to deal with the case $t_j=0$, we have
\begin{equation}\label{twosums} \sum_{j} \lambda_jd^{(1)}_{t_j}(f)\hat{u}^2_{1,j}=\sum_{\frac{1}{2} < s_j < 1}\lambda_jd^{(1)}_{t_j}(f)\hat{u}^2_{1,j}+\sum_{t_j\in \mathbb{R}-\left\{0\right\}}\lambda_jd^{(1)}_{t_j}(f)\hat{u}^2_{1,j}+O\left(X^{1/2}\log{X}\right).\end{equation}
Noting that the first sum in the right hand side of equation (\ref{twosums}) is finite, we apply Lemma \ref{lemmamterm} to get
\begin{equation}\sum_{\frac{1}{2} < s_j < 1}\lambda_jd^{(1)}_{t_j}(f)\hat{u}^2_{1,j}
=-\sum_{\frac{1}{2} < s_j < 1}\lambda_j(s_j+1)\gamma_K(1-s_j)\hat{u}^2_{1,j}X^{s_j}+O\left(X^{1/2}+Y\right).\nonumber
\end{equation}
For the second sum  in the right hand side of equation (\ref{twosums}), we use Lemma \ref{medterms} to deal with the terms corresponding to small $t_j$'s.
\begin{eqnarray}\sum_{t_j\in \mathbb{R}-\left\{0\right\}}\lambda_jd^{(1)}_{t_j}(f)\hat{u}^2_{1,j}&=&\sum_{\substack{t_j\in \mathbb{R}-\left\{0\right\}:\\ |t_j| \geq 1}}\lambda_jd^{(1)}_{t_j}(f)\hat{u}^2_{1,j}+ \sum_{\substack{t_j\in \mathbb{R}-\left\{0\right\}:\\ |t_j| < 1}}\lambda_jd^{(1)}_{t_j}(f)\hat{u}^2_{1,j} \nonumber \\
&=& \sum_{ |t_j| \geq 1}\lambda_jd^{(1)}_{t_j}(f)\hat{u}^2_{1,j} + O\left(X^{1/2}\right).\nonumber
\end{eqnarray}
We now use a dyadic decomposition for the bulk of the spectrum.
$$\sum_{ |t_j| \geq 1}\lambda_jd^{(1)}_{t_j}(f)\hat{u}^2_{1,j} \ll \sum_{ t_j \geq 1}\lambda_j\left|d^{(1)}_{t_j}(f)\right|\hat{u}^2_{1,j} = \sum_{n=0}^{\infty}\left( \sum_{2^n \leq t_j <2^{n+1}}\lambda_j\left|d^{(1)}_{t_j}(f)\right|\hat{u}^2_{1,j} \right).$$
Using Lemma \ref{estimate1}(b) and Lemma \ref{estimate2}(b), this becomes 
\begin{eqnarray}\sum_{ |t_j| \geq 1}\lambda_jd^{(1)}_{t_j}(f)\hat{u}^2_{1,j} &\ll& \sum_{n=0}^{\infty}\left( 2^{2n} \cdot \hbox{min}\left(2^{-7n/2}X^{3/2}Y^{-1},2^{-5n/2}X^{1/2} \right)\sum_{2^n \leq t_j <2^{n+1}}\hat{u}^2_{1,j} \right) \nonumber \\
&\ll& \sum_{n=0}^{\infty}\left(\hbox{min}\left(2^{-3n/2}X^{3/2}Y^{-1},2^{-n/2}X^{1/2} \right)\sum_{2^n \leq t_j <2^{n+1}}\hat{u}^2_{1,j} \right).
\nonumber\end{eqnarray}
Finally, we apply Lemma \ref{periodslemma}.
\begin{eqnarray}\sum_{ |t_j| \geq 1}\lambda_jd^{(1)}_{t_j}(f)\hat{u}^2_{1,j}
&\ll& X^{1/2}\sum_{n=0}^{\infty}\left( \hbox{min}\left(2^{-3n/2}XY^{-1},2^{-n/2} \right)\cdot 2^{n}\right) \nonumber \\
&\ll& X^{1/2}\sum_{n=0}^{\infty}\left( \hbox{min}\left(2^{-n/2}XY^{-1},2^{n/2} \right)\right)\nonumber \\
&=&X^{1/2}\left( \sum_{2^n \leq XY^{-1}}2^{n/2}+XY^{-1}\cdot \sum_{2^n > XY^{-1}}2^{-n/2} \right) \nonumber \\
&\ll& X^{1/2}\left(X^{1/2}Y^{-1/2}+XY^{-1}\cdot\left(XY^{-1}\right)^{-1/2} \right)\ll XY^{-1/2}. \nonumber \end{eqnarray}
Therefore,
$$N_4(X,l)=-2\sum_{\frac{1}{2} < s_j < 1}\lambda_j(s_j+1)\gamma_K(1-s_j)\hat{u}^2_{1,j}X^{s_j}+O\left(Y+XY^{-1/2} +X^{2/3} \right). $$
This is optimized for $Y=XY^{-1/2}$, i.e., $Y=X^{2/3}$.
This concludes the proof of the theorem for $\Gamma$ cocompact. For $\Gamma$ a general cofinite group, to find asymptotic formulae for $N_i(X,l)$, we also need to take into account the contribution of the continuous spectrum in the spectral side of the modified relative trace formula (Theorem \ref{modtrace}). We demonstrate the cases where $i=1$ and $i=4$. The cases $i=2$ and $i=3$ are similar.
It is enough to show that, for $f_1,f_4$ as in section \ref{choicesection},
\begin{equation*} E^{(a)}(f_1), \; E^{(c)}(f_4) \ll XY^{-1/2}+X^{1/2}. \end{equation*}
The bound $E^{(a)}(f_1) \ll XY^{-1/2}+X^{1/2}$ follows from (\ref{eperiodbound1}) and the work of Lekkas in \cite{lekkas}, and $E^{(c)}(f_4) \ll XY^{-1/2}+X^{1/2}$ follows from (\ref{eperiodbound2}) and our estimates from section \ref{estsection}, in a similar manner as the cocompact case in section \ref{mainthmsection}. The cases $i=2$ and $i=3$ are similar.

\section{Estimates of the Error in Mean Square} \label{sectionmse}
To prove Theorem \ref{newmserr}, we use the large sieve inequalities provided in \cite{voskoulekkas}. Let $a_j$ be a sequence of complex numbers, and let $a_{\mathfrak{a}}(t)$ be a sequence of continuous complex functions indexed by the cusps $\mathfrak{a}$. Fix a non-negative integer $m$ to be the weight.
\begin{theorem} \label{sievevar}
    Let $T,X>1$ and $\delta>0$. Let $x_1, \dots , x_R \in [X,2X]$. If $|x_\nu-x_\mu| > \delta >0$ for $\nu \neq \mu$, then
    \begin{equation*}\sum_{\nu=1}^R \Big| \sum_{|t_j|\le T}a_jx_\nu^{it_j}\widehat{u}_{m,j}+\frac{1}{4\pi}\sum_{\mathfrak{a}}\int_{-T}^{T}a_{\mathfrak{a}}x_{\nu}^{it}\hat{E}_{\mathfrak{a}, m}(1/2+it) \, dt\Big|^2 \ll \big(T+X\delta^{-1}\big)||\mathbf{a}||_*^2, \end{equation*}
 where $$||\mathbf{a}||_* := \left(\sum_{|t_j| \le T}|a_j|^2+\frac{1}{4\pi}\sum_{\mathfrak{a}}\int_{-T}^{T}|a_{\mathfrak{a}}(t)|^2 \, dt \right)^{1/2} \;.$$
\end{theorem}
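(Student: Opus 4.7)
The plan is to adapt the large-sieve strategy of Chamizo \cite{chamizo1} to geodesic periods, with the modified relative trace formulae of Theorem \ref{modtrace} (and their higher-weight analogues obtained via the raising-operator relations of Section \ref{secperiods}) playing the role of the pre-trace formula. By the $TT^{*}$ principle, the stated estimate is equivalent to the bilinear bound
\[
\sum_{|t_j|\le T}|\hat u_{m,j}|^{2}|B(t_j)|^{2}+\frac{1}{4\pi}\sum_{\mathfrak a}\int_{-T}^{T}|\hat E_{\mathfrak a,m}(\tfrac12+it)|^{2}|B(t)|^{2}\,dt\;\ll\;\bigl(T+X\delta^{-1}\bigr)\sum_{\nu}|b_\nu|^{2},
\]
valid for arbitrary complex $(b_\nu)$, where $B(t):=\sum_\nu b_\nu x_\nu^{-it}$.

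I would then dominate the sharp cutoff $\mathbbm{1}_{[-T,T]}(t)$ by a non-negative smooth majorant $h(t):=d^{(m)}_t(f)$ arising as the Selberg-type transform of an admissible test function $f$ supported in an interval of length $O(T^{-2})$ around $1$, with non-negativity secured by a Chamizo-type auto-convolution $f=\varphi\ast\varphi^{*}$ applied to the integral representations of Lemma \ref{altformulas}. Opening the square $|B(t)|^{2}=\sum_{\nu,\mu}b_\nu\overline{b_\mu}(x_\mu/x_\nu)^{it}$ and, for each fixed $y=x_\mu/x_\nu$, writing $y^{it}d_t^{(m)}(f)=d_t^{(m)}(f_y)$ for a $y$-dependent test function $f_y$ obtained from $f$ by a geodesic shift, Theorem \ref{modtrace} converts the majorized spectral sum into $\sum_{\nu,\mu}b_\nu\overline{b_\mu}K(x_\mu/x_\nu)$, where
\[
K(y)=f_y(1)\,\mathrm{len}(l)+\sum_{\gamma\in\Gamma_{1}\backslash\Gamma/\Gamma_{1}\setminus\{\mathrm{id}\}}w_m(\gamma)\,g\!\bigl(B(\gamma)^{2};\,f_y\bigr).
\]
The compact support of $f_y$ and integration by parts in $g$ yield the pointwise bound $|K(y)|\ll T(1+T|\log y|)^{-N}$ for every $N\ge 1$. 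The diagonal $\nu=\mu$ then contributes $K(1)\sum_\nu|b_\nu|^{2}\ll T\sum_\nu|b_\nu|^{2}$ by Lemma \ref{periodslemma} and Lemma \ref{eisensteinperiods}, while the off-diagonal, via Cauchy--Schwarz, the well-spacing hypothesis $|x_\nu-x_\mu|>\delta$, and the above decay of $K$, contributes $\ll X\delta^{-1}\sum_\nu|b_\nu|^{2}$.

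The main obstacle is the construction of the test function and the verification of the decay estimate for $K(y)$. Because $d^{(m)}_t$ is built from generalized hypergeometric integrals rather than the classical Harish--Chandra/Selberg transform, both the auto-convolution argument (to secure positivity and the majorization $h\ge\mathbbm{1}_{[-T,T]}$) and the subsequent stationary-phase analysis of $g(\cdot;f_y)$ for $y\ne 1$ are substantially more involved than in the elliptic or Euclidean sieves; particular care is required to keep the implied constants uniform in $y$ near $y=1$, and to extend the manipulations to the continuous spectrum where the corresponding Eisenstein period estimates of Lemma \ref{eisensteinperiods} come into play.
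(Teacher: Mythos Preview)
The paper does not prove Theorem~\ref{sievevar}; it is quoted from \cite[Thm.~3]{voskoulekkas}, and the only information supplied here is the remark (end of Section~\ref{mrtfsection}'s introduction in the summary) that the modified relative trace formulae of Theorem~\ref{modtrace} are ``a key ingredient'' in that proof. Your overall architecture---$TT^{*}$ duality, majorization by a non-negative spectral transform, conversion to a geometric kernel via the trace formula, and diagonal/off-diagonal splitting---is precisely the Chamizo template and is consistent with that hint, so there is nothing to compare against at the level of detail.

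One step in your outline does need reworking, however. The identity ``$y^{it}d_{t}^{(m)}(f)=d_{t}^{(m)}(f_{y})$ for an $f_{y}$ obtained from $f$ by a geodesic shift'' is not available: $d_{t}^{(m)}$ is an integral against a hypergeometric kernel (Lemma~\ref{altformulas}), not a Mellin transform in a variable carrying a multiplicative translation action, so no substitution in $f$ produces a pure phase $y^{it}$ on the spectral side. What the estimates of Lemma~\ref{sievecoeff} actually show is that for a test function localized near scale $x$ one has $d_{t}^{(m)}(f)=c(t)x^{it}+c(-t)x^{-it}+\text{(lower order)}$, via the large-argument asymptotics of the hypergeometric kernel (Lemma~\ref{intozlem}); it is this asymptotic, applied to a family of test functions indexed by the $x_{\nu}$ (equivalently, to the integrated kernel of Section~\ref{setupsection} at shifted arguments along the geodesic), that realizes the phases $x_{\nu}^{it}$ inside the relative trace formula. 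Your pointwise bound $|K(y)|\ll T(1+T|\log y|)^{-N}$ and the diagonal/off-diagonal bookkeeping should then go through, but the link between the free parameters $x_{\nu}$ and the trace formula has to be set up along these lines rather than via an abstract ``shift'' of a single $f$.
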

Similarly to equation \ref{errordef}, we define $$E_i(X):=N_i(X)-M_i(X),$$
where $$M_i(X):=\frac{2\delta_{1i}\left(\mathrm{len}(l)\right)^2}{\pi\mathrm{Vol}\left(\mathbb{H}/\Gamma\right)}X+\frac{1}{2 \sqrt{\pi}}\sum_{1/2<s_j < 1}c_i(s_j)X^{s_j},$$
and $N_i(X)$ is as in Theorem \ref{equivmain}.
We use Theorem \ref{sievevar} to prove the following result.
\begin{theorem} Let $X>1$ and let $X_1, \dots , X_R \in [X,2X]$. Let $\delta>0$ be such that $|X_\nu-X_\mu| > \delta$ for every $\nu \neq \mu$.
We have
\begin{equation*}\sum_{m=1}^R|E_{i}(X_m,l)|^2 
\ll \mathrm{min}\left\{X^{3/2},  X^{4/3}R^{1/3}\log{X}\right\} +X^2\delta^{-1} \log^2{X}
,\end{equation*}
and, hence, for $\mu,\mu' \in \left\{1,-1\right\}$, by equation (\ref{ndeltaequiv}),
\begin{equation*}\sum_{m=1}^R|E^{\mu,\mu'}(X_m,l)|^2 
\ll \mathrm{min}\left\{X^{3/2},  X^{4/3}R^{1/3}\log{X}\right\} +X^2\delta^{-1} \log^2{X}
.\end{equation*}
\end{theorem}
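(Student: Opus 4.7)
The plan is to express $E_i(X)$ via the modified relative trace formula (Theorem \ref{modtrace}) as a spectral sum, then apply the large sieve inequality (Theorem \ref{sievevar}) to control the oscillatory part while bounding the tail with the pointwise estimates from Section \ref{estsection}. By the identity \eqref{ndeltaequiv} it suffices to treat $E_1, E_3, E_4$ (the case $E_2$ follows from Remark \ref{th1rem1}).

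First, I would combine Theorem \ref{modtrace} with the test functions from Section \ref{choicesection} to write, for each $i$,
$$E_i(X) = \sum_{t_j \in \mathbb{R} \setminus \{0\}} C_{i,j}\, d_{t_j}^{(\varepsilon_i)}(f_i) + (\text{Eisenstein analogue}) + O\!\left(Y + X^{2/3}\right),$$
where $C_{i,j}$ is a product of the relevant periods $\hat{u}_{0,j}$, $\hat{u}_{1,j}$ (together with a power of $\lambda_j$), and the sum over $\frac{1}{2}<s_j<1$ has been absorbed into $M_i(X)$. Next, I would truncate the spectrum at a parameter $T$: the tail $|t_j|>T$ is handled via the dyadic argument of Section \ref{mainthmsection}, using Lemmas \ref{estimate1}, \ref{estimate2} and \ref{periodslemma}, yielding a pointwise bound that can be summed trivially over the $R$ points.

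The key step is to write, for $|t_j| \leq D^{-2}$, the expansion from Lemma \ref{sievecoeff}, namely
$$d_{t_j}^{(1)}(f_i) = X^{1/2}\!\left(\alpha_i(t_j, D)\, X^{it_j} + \alpha_i(-t_j, D)\, X^{-it_j}\right) + O\!\left(|t_j|^{-\kappa_i}\log X\right),$$
with coefficients satisfying $\alpha_i(t,D) \ll |t|^{-\kappa_i}\min(D^{-1}, |t|)$. Plugging this into the truncated spectral sum and separating the $X^{it_j}$ and $X^{-it_j}$ pieces yields sums of exactly the shape to which Theorem \ref{sievevar} applies, with weights $a_j = \alpha_i(\pm t_j, D)\,\hat{u}_{\ast,j}$ (similarly with $\hat{E}_{\mathfrak{a},\ast}$ for the Eisenstein part). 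The norm $\|\mathbf{a}\|_*^2$ is then estimated by a dyadic decomposition in $|t_j|$ combined with Lemma \ref{periodslemma} and \eqref{eperiodbound2}, which gives $\|\mathbf{a}\|_*^2 \ll X \log^2 X$ because the weights peak at $|t|\sim D^{-1}$. Applying Theorem \ref{sievevar} yields
$$\sum_{m=1}^R \left|S_{\mathrm{low}}(X_m)\right|^2 \ll (T + X\delta^{-1})\, X \log^2 X,$$
and the intermediate range $D^{-2} < |t_j| \leq T$ is dispatched using Lemma \ref{estimate2} and Lemma \ref{periodslemma} termwise.

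Finally, I would optimize the parameters: picking $Y = X^{2/3}$ and $T, D$ balanced against the various error contributions, plus the alternative trivial bound $R\cdot \max_m |E_i(X_m)|^2 \ll R X^{4/3}$ from Theorem \ref{mainthm}, produces the claimed $\min\{X^{3/2}, X^{4/3}R^{1/3}\log X\} + X^2\delta^{-1}\log^2 X$. The statement for $E^{\mu,\mu'}$ then follows immediately from \eqref{ndeltaequiv} and the triangle inequality. The main obstacle I anticipate is the bookkeeping around the cutoff $D^{-2}$: the weights $\alpha_i(t,D)$ are genuinely two-scale (they behave like $|t|^{-\kappa_i+1}$ for small $|t|$ and like $D^{-1}|t|^{-\kappa_i}$ for $|t|$ near $D^{-1}$), and one must verify that the resulting dyadic sum $\|\mathbf{a}\|_*^2$ does not pick up extra log-losses beyond those already accounted for. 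A secondary technicality is ensuring that the Eisenstein contribution enjoys exactly parallel estimates, which it does via Lemma \ref{eisensteinperiods} and the same argument as in the cuspidal case.
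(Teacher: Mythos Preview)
Your overall strategy is right, but two concrete steps do not go through as written and would not yield the stated bound.

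First, the smoothing error. You write $E_i(X) = (\text{spectral sum}) + O(Y + X^{2/3})$ and later invoke the ``trivial bound $R\cdot\max_m|E_i(X_m)|^2 \ll RX^{4/3}$''. Summed over $R$ points, either of these produces $RX^{4/3}$, which is \emph{not} dominated by $X^{4/3}R^{1/3}\log X$ unless $R \ll (\log X)^{3/2}$. The paper avoids this by comparing $N_4$ directly with $\tilde N_4 := I'_{f_4,1}(0)$: one has $|\tilde N_4(X) - N_4(X)| \ll |N_1(X+Y)-N_1(X)| \ll |E_1(X)| + Y$, hence $|E_4| \ll |\tilde E_4| + |E_1| + Y$, and the $E_1$ piece is then handled by quoting the already-established mean-square bound for $i=1$. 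The pointwise $X^{2/3}$ never enters.

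Second, and more seriously, applying Theorem~\ref{sievevar} \emph{once} to the full block $|t_j|\le D^{-2}$ gives $(D^{-2}+X\delta^{-1})\|\mathbf a\|_*^2$, and with your $\|\mathbf a\|_*^2 \ll X\log^2 X$ this is $X^3Y^{-2}\log^2 X + X^2\delta^{-1}\log^2 X$. Balancing $X^3Y^{-2}$ against $RY^2$ gives $X^{3/2}R^{1/2}$, a full factor of $R^{1/6}$ too large. The paper instead decomposes $1\le|t_j|\le D^{-2}$ dyadically, applies Cauchy--Schwarz across the $O(\log X)$ blocks, and then applies the sieve \emph{separately on each block} $T<|t_j|\le 2T$ with sieve parameter $2T$ and weights $a_j=\lambda_j b(t_j,D)\hat u_{1,j}\mathbf 1_{T<|t_j|\le 2T}$. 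This gives $\sum_m|S^{(4)}(X_m,T)|^2 \ll X(T+X\delta^{-1})T^{-2}\min(T^2,D^{-2})$, and summing over $T$ yields $X^2Y^{-1}\log X + X^2\delta^{-1}\log^2 X$. Only with this $Y^{-1}$ (rather than $Y^{-2}$) does the $R$-dependent choice $Y=X^{2/3}R^{-1/3}$ balance $X^2Y^{-1}\log X$ against $RY^2$ to give $X^{4/3}R^{1/3}\log X$; your fixed choice $Y=X^{2/3}$ cannot produce the $R^{1/3}$ dependence.
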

\begin{proof}
For simplicity, we assume that $\Gamma$ is cocompact. The general cofinite case follows similarly, as the same arguments can be used to deal with the contribution of the continuous part of the spectrum. We refer to \cite[Prop. 5.3] {chatzakos}. For the case $i=1$, see \cite{lekkas}.
The case $i=2$ is equivalent to $i=3$ (see Remark \ref{th1rem1}). Therefore, we are left to consider $i=3$ and $i=4$. We demonstrate the case $i=4$. The case $i=3$ is similar, with part (b) of Theorem \ref{modtrace} being used in place of part (c).

Let $D \in [0,1]$ be such that $Y > X^{1/2} \log{X}$. Recall that $Y:=DX$. Let $f_4$  be as in equation (\ref{f4def}).
Let
$$\tilde{N}_{4}(X):=I'_{f_4,1}(0),$$
where $I_{f,1}$ is as in Proposition \ref{geoside}, and
$$\tilde{M}_{4}(X):=2\sum_{j:\lambda_j<1/4} \lambda_jd^{(1)}_{t_j}(f_4)\hat{u}^2_{1,j}.$$
Finally, let $$\tilde{E}_4(X):=\tilde{N}_4(X)-\tilde{M}_4(X).$$
We have
\begin{equation}\tilde{E}_4(X)=E_{4}(X)+O\left( |\tilde{N}_4(X)-N_4(X)|+|\tilde{M}_4(X)-M_4(X)|\right). \nonumber \end{equation}
By Lemma \ref{lemmamterm}, we have that
$$|\tilde{M}_4(X)-M_4(X)| \ll Y.$$
Furthermore, we have
$$|\tilde{N}_4(X)-N_4(X)| \ll |N_1(X+Y)-N_1(X)| \ll |E_1(X)|+Y.$$
We conclude that
\begin{equation}\label{eitoefi}E_4(X) \ll |\tilde{E}_{4}(X)|+|E_1(X)|+Y.\end{equation}
Define
$$S^{(4)}(X,T):= \sum_{T<|t_j|\le 2T}2\lambda_jd^{(1)}_{t_j}(f_4)\widehat{u}^2_{1,j} \; ,$$ 
and split the spectrum in the following intervals:
$$A_1=\{t_j : 0<|t_j|\le 1\}, \quad A_2=\{t_j : 1\le |t_j| \le D^{-2}\}, \quad A_3=\{t_j : |t_j|>D^{-2}\} \; .$$
Let also 

$$ 
S^{(4)}_m:=2\sum_{t_j\in A_m}\lambda_jd^{(1)}_{t_j}(f_4)\widehat{u}^2_{1,j}\; ,$$
so that,
$$ \tilde{E}_{4}(X)=S^{(4)}_1+S^{(4)}_2+S^{(4)}_3 \; .$$
Via Lemma \ref{estimate2}, we have
$$S^{(4)}_1 \ll X^{1/2}. $$
On the other hand, via Lemmata \ref{estimate1} and \ref{estimate2} we have
$$S^{(4)}_3 \ll X^{1/2}D^{-1}\sum_{|t_j|>D^{-2}}|t_j|^{-3/2}|\widehat{u}_{1,j}|^2.$$
By partial summation and Lemma \ref{periodslemma}, we conclude that
$$S^{(4)}_3 \ll X^{1/2}D^{-1} \cdot D=X^{1/2} < Y.$$
Hence, using a dyadic decomposition, we get
$$ |\tilde{E}_{4}\left(X\right)| \ll \left|S_2^{(4)}\right|+Y \ll \sum_{1\le T=2^k \le D^{-2}}\left|S^{(4)}(X,T)\right|+Y.$$
Adding over distinct $X_1,X_2,\dots X_R \in [X,2X]$, we get
$$ \sum_{m=1}^R|\tilde{E}_{4}(X_m)|^2 \ll \sum_{m=1}^R \Big|\sum_{1\le T=2^k \le 4D^{-2}}S^{(4)}(X_m,T)\Big|^2+RY^2.$$
Via the Cauchy--Schwarz inequality, we deduce that
\begin{eqnarray}\sum_{m=1}^R|\tilde{E}_4(X_m)|^2 &\ll& \log{X} \sum_{m=1}^R \sum_{1\le T=2^k \le 4D^{-2}}\Big| S^{(4)}(X_m,T)\Big|^2+RY^2 \; \nonumber \\
&\ll&
\log{X}
\sum_{1\le T=2^k \le 4D^{-2}}\sum_{m=1}^R \Big| S^{(4)}(X_m,T)\Big|^2+RY^2
. \nonumber \end{eqnarray}
We now deal with the remaining terms.
Via Lemma \ref{sievecoeff} b), we have
\begin{eqnarray}S^{(4)}(X_m,T) &\ll& \sum_{T<|t_j|\le 2T}2\lambda_j\left(X_m^{1/2} (b(t_j,D) X_m^{it_j} + b(-t_j,D)  X_m^{-it_j})+O(t_j^{-7/2}\log{X})\right)\widehat{u}^2_{1,j}  \; , \nonumber \\
&\ll& X^{1/2}\sum_{T<|t_j|\le 2T}\lambda_jb(t_j,D) X_m^{it} \widehat{u}^2_{1,j}+ T^{-3/2}\log{X}\sum_{T<|t_j|\le 2T}\widehat{u}^2_{1,j}. \label{applyperiods}
\end{eqnarray}
Applying Lemma \ref{periodslemma} for the second sum of (\ref{applyperiods}), we have
\begin{eqnarray}S^{(4)}(X_m,T) \ll X^{1/2}\sum_{T<|t_j|\le 2T}\lambda_jb(t_j,D) X_m^{it} \widehat{u}^2_{1,j}+ T^{-1/2}\log{X}. \nonumber 
\end{eqnarray}
Hence,
$$\sum_{m=1}^R \Big| S^{(4)}(X_m,T)\Big|^2\ll X \sum_{m=1}^{R} \left| \sum_{T<|t_j|\le 2T}\lambda_jb(t_j,D) X_m^{it} \widehat{u}^2_{1,j}\right|^2+RT^{-1}(\log{X})^2.$$
We apply Theorem \ref{sievevar} with $m=1$ and $a_j=\lambda_jb(t_j,D)\widehat{u}_{1,j}$ when $T<|t_j| \leq 2T$ and $a_j=0$ otherwise, to get
$$\sum_{m=1}^R \Big| S^{(4)}(X_m,T)\Big|^2\ll X(T+X\delta^{-1})||\mathbf{a}||_*^2+RT^{-1}(\log{X})^2.$$
Furthermore, via Lemma \ref{sievecoeff}, we have
$$||\mathbf{a}||_*^2 = \sum_{|t_j|<2T}|a_j|^2 \ll \sum_{T<t_j<2T} t_j^{-3} \hbox{min}\left(D^{-2},t^2_j\right) \widehat{u}^2_{1,j} \ll T^{-2}\hbox{min}\left\{D^{-2},T^2\right\},$$
and, hence,
$$\sum_{m=1}^R \Big| S^{(4)}(X_m,T)\Big|^2\ll X(T+X\delta^{-1})T^{-2}\min\{T^2,D^{-2}\}+RT^{-1}(\log{X})^2. $$
Therefore,
\begin{eqnarray}\sum_{m=1}^R|\tilde{E}_{4}(X_m,l)|^2 &\ll& X\log{X} \sum_{1\le T=2^k \le 4D^{-2}}(T+X\delta^{-1})T^{-2}\min\{T^2,D^{-2}\}+RY^2 \; \nonumber \\
&\ll& X\log{X}\left(
D^{-1}+X\delta^{-1}\log{X}+D^{-2}\left(D+X\delta^{-1}D^{2} \right)\right)+RY^2 \nonumber \\
&\ll& X\log{X}\left(
D^{-1}+X\delta^{-1}\log{X}\right)+RY^2 \nonumber \\
&\ll& X^2Y^{-1} \log{X} +X^2\delta^{-1} \log^2{X}+RY^2 \nonumber
.\end{eqnarray}
We will now make an appropriate choice for $Y$.
If $X^{2/3}R^{-1/3}>X^{1/2}\log{X}$, take $Y=X^{2/3}R^{-1/3}$, so that $$X^2Y^{-1}=RY^2=X^{4/3}R^{1/3}<X^{3/2}/\log{X}.$$Otherwise, to ensure that $Y>X^{1/2}\log{X}$, take $Y=2X^{1/2}\log{X}$. We have
\begin{equation*}\sum_{m=1}^R|\tilde{E}_{4}(X_m,l)|^2 
\ll \hbox{min}\left\{X^{3/2},  X^{4/3}R^{1/3}\log{X}\right\} +X^2\delta^{-1} \log^2{X}
.\end{equation*}
By equation \ref{eitoefi} and \cite[Prop 1.4]{lekkas}, we deduce 
\begin{equation*}\sum_{m=1}^R|E_{4}(X_m,l)|^2 
\ll \hbox{min}\left\{X^{3/2},  X^{4/3}R^{1/3}\log{X}\right\} +X^2\delta^{-1} \log^2{X}
.\end{equation*}
The second part of the theorem follows from the first via equation (\ref{ndeltaequiv}).
\end{proof}
 
Choosing $\delta \gg X/R$ and $R>X^{1/2}$,
we have
\begin{equation*}\sum_{m=1}^R|E^{\mu,\mu'}(X_m)|^2 \frac{X}{R}
\ll\frac{X^3\delta^{-1}\log^2{X}}{R} \ll X^2\log^2{X} 
.\end{equation*}
Taking $X_m$ to be equidistanced, we have
\begin{equation*}\lim_{R \rightarrow +\infty}\sum_{m=1}^R|E^{\mu,\mu'}(X_m)|^2 \frac{X}{R}
=\int_{X}^{2X}\left|E^{\mu,\mu'}(x) \right|^2 \, dx
.\end{equation*}
Hence,
\begin{equation*}\frac{1}{X}\int_{X}^{2X}\left|E^{\mu,\mu'}(x) \right|^2 \, dx \ll X\log^2{X}
,\end{equation*}
concluding the proof of Theorem \ref{newmserr}.
\section{Arithmetic Applications} \label{arithmeticsection}
In this section, we prove Theorem \ref{ideals} about correlation sums involving $\mathcal{N}(n)$, the number of ideals of $\mathbb{Z}\left[\sqrt{2}\right]$ with norm $n$.
We achieve this by applying Theorems \ref{mainthm} and \ref{newmserr} for certain choices of $\Gamma$ related to quaternion orders. 

 In particular, consider the quaternion algebra $\displaystyle \left(\frac{q,r}{\mathbb{Q}}\right)$ generated by $i,j$ with $i^2=q$, $j^2=r$, and $ij=-ji$, where $q$ prime and $r$ either prime or equal to $\pm 1$.
This algebra embeds into $M_{2\times 2}\left(\mathbb{R} \right)$
via the map
$$\Phi: \, u+vi+sj+tij \longmapsto \begin{pmatrix}
u+v\sqrt{q} & s+t\sqrt{q}\\
r\left(s-t\sqrt{q}\right) & u-v\sqrt{q}
\end{pmatrix}, \quad u,v,s,t \in \mathbb{Q}.$$
For the case $r=p$, a fixed prime, and $q=2$, let $\mathcal{J}$ be the order in $\displaystyle 
 \left(\frac{2,p}{\mathbb{Q}}\right)$ with $\mathbb{Z}$-basis $1,i,j,ij$. Let $\Gamma$ be the group defined by \begin{equation} \Gamma :=\Phi\left(\mathcal{J}\right)\cap \hbox{SL}_2\left(\mathbb{R}\right)/\left\{\pm I \right\}= \left\{ \begin{pmatrix}
a & b\\
c & d
\end{pmatrix}  \left\vert \begin{array}{l} a=u+v\sqrt{2}, \; b=s+t\sqrt{2}, \, \\ c=p\left(s-t\sqrt{2}\right), \; d=u-v\sqrt{2}, \\  u,v,s,t \in \mathbb{Z}, \; ad-bc=1 \end{array} \right. \right\}\slash\left\{ \pm I \right\}, \label{quatgroup} \end{equation}
i.e., the intersection of $\Phi(\mathcal{J})$ and $\hbox{SL}_2\left(\mathbb{R}\right)$, modulo $\pm I$. By \cite[Prop 3.2]{hejhalquat}, $\Gamma$ is a cofinite Fuchsian group.
Let
$$\gamma_1=\gamma_2=h=\begin{pmatrix} \epsilon^2 & 0 \\ 0 & \epsilon^{-2} \end{pmatrix},$$ where $\epsilon = 1+\sqrt{2}$ is the fundamental unit of $\mathcal{O}_{\mathbb{Q}\left(\sqrt{2}\right)}=\mathbb{Z}\left[\sqrt{2}\right]$. Note that $$h^n \begin{pmatrix}
a & b\\
c & d
\end{pmatrix} h^{m}=\begin{pmatrix}
\epsilon^{2(n+m)} a & \epsilon^{2(n-m)}b\\
\epsilon^{-2(n-m)}c & \epsilon^{-2(n+m)}d\end{pmatrix}, $$
so, for $\Gamma_1=\Gamma_2=\left\langle h\right\rangle$,  the class of $\begin{pmatrix}
a & b\\
c & d
\end{pmatrix}$ in $\Gamma_1 \backslash \Gamma \slash \Gamma_2$ is $$ \left\{ \left. \begin{pmatrix}
\epsilon^{2\alpha} a & \epsilon^{2\beta}b\\
\epsilon^{-2\beta}c & \epsilon^{-2\alpha}d\end{pmatrix} \right\vert \alpha,\beta \in \mathbb{Z} \, , \alpha = \beta \
 (\hbox{mod}\ 2)  \right\}. $$
This implies a four-to-one map between  $\Gamma_1 \backslash \Gamma \slash \Gamma_2-\left\{\hbox{id}\right\}$ and the set of non-trivial ideals $\mathfrak{a}$, $\mathfrak{b}$ of $\mathbb{Z}\left[\sqrt{2}\right]$ with $\left|N(\mathfrak{a})-pN(\mathfrak{b}) \right|=1$, defined by  $$\begin{pmatrix}
a & b\\
c & d
\end{pmatrix} \longmapsto \left((a),(b) \right).$$
Indeed, the pre-image of every pair of such ideals $\mathfrak{a}$, $\mathfrak{b}$  consists of the pairwise distinct classes of $$\begin{pmatrix}
 \pm a & b\\
c & \pm d\end{pmatrix} , 
\begin{pmatrix}
 \pm \epsilon^2a & b\\
c & \pm \epsilon^{-2}d\end{pmatrix},$$
where $a,b$ are generators satisfying $\left|N(\mathfrak{a})-pN(\mathfrak{b}) \right|=N(a)-pN(b)$. Such generators always exist, as $\mathbb{Z} \left[ \sqrt{2} \right]$ has narrow class number $1$. 
Note that, for $p>2$, there does not exist an element $\gamma'=\begin{pmatrix}
a' & b'\\
c' & d'
\end{pmatrix}$ with $a'=\epsilon \cdot a$, $d'=-\epsilon^{-1} \cdot d$, otherwise $-ad=a'\cdot d' \equiv 1 \equiv ad \pmod{p}$. For $p=2$, this would give that $N(b')$ and $N(b)$ have different parities, and, therefore, the pairs $((a),(b))$ and $ \, ((a'),(b'))$ cannot be equal.

Recall 
$$N(X,l)=\#\left\{ \gamma \in \Gamma_1 \backslash \Gamma \slash \Gamma_1 \vert  |ad+bc| \leq X \right\}.$$
Note that, as $ad$ and $bc$ are both non-integers and $bc$ is a multiple of $p$, the equation $ad-bc=1$ gives that either $ad$ and $bc$ have the same sign or one of them is equal to $0$. Hence, in the notation described above, we have $$|ad+bc|=N(\mathfrak{a})+pN(\mathfrak{b}).$$
Therefore, 
\begin{align*}
N(X,l)&=4\#\left\{\mathfrak{a}, \mathfrak{b}  \vert  \left|N(\mathfrak{a})-pN(\mathfrak{b}) \right|=1 \; , \; N(\mathfrak{a})+pN(\mathfrak{b}) \leq X \right\}+1  \\
&=4\#\left\{\mathfrak{a}, \mathfrak{b}  \vert  \left|N(\mathfrak{a})-pN(\mathfrak{b}) \right|=1 \; , \; N(\mathfrak{b})  \leq \frac{X}{2p}\right\}+O\left(X^{\epsilon} \right).  
\end{align*}
Similarly,
\begin{equation*}
N^{+1,+1}(X,l)+N^{-1,-1}(X,l)=4\#\left\{\mathfrak{a}, \mathfrak{b}  \vert  N(\mathfrak{a})-pN(\mathfrak{b})=1 , \; N(\mathfrak{b})  \leq \frac{X}{2p} \right\}+O\left(X^{\epsilon} \right),  
\end{equation*}
and
\begin{equation*}
N^{+1,-1}(X,l)+N^{-1,+1}(X,l)=4\#\left\{\mathfrak{a}, \mathfrak{b}  \vert  N(\mathfrak{a})-pN(\mathfrak{b})=-1, \; N(\mathfrak{b})  \leq \frac{X}{2p} \right\}+O\left(X^{\epsilon} \right).  
\end{equation*}
Therefore, via Theorem \ref{mainthm}, we have
$$\sum_{n \leq X}\mathcal{N}(n)\mathcal{N}(pn\pm 1)
=4c_p^{-1}p \cdot \left(\frac{\log{\epsilon}}{\pi}\right)^2 X+\sum_{1/2 \leq s_j < 1}a^{\pm}_jX^{s_j}+O\left(X^{2/3}\right),$$
where $c_p=\hbox{Vol}\left(\Gamma \backslash \mathbb{H}\right)/2\pi$.
 We will compute $c_p$ explicitly by identifying $\Gamma$ as the conjugate of a finite-index subgroup of a group that corresponds to a maximal quaternion order.
\begin{lemma}
\label{cplemma}
    We have that
\begin{equation*}\textup{Vol}\left(\Gamma \backslash \mathbb{H}\right) =\left\{
\begin{array}{ll}
      \displaystyle 2(p-1)\pi, & p=  \pm 3\,  \left( \mathrm{mod} \; 8 \right), \\
      \displaystyle 2 (p+1)\pi, & p=  \pm 1 \,  \left( \mathrm{mod} \; 8 \right),\\
      \displaystyle 2p\pi, & p=2, \\
\end{array} 
\right.
\end{equation*}
and, hence,
\begin{equation*}c_p =p+\left(\frac{2}{p} \right) =\left\{
\begin{array}{ll}
      \displaystyle p-1, & p=  \pm 3\,  \left( \mathrm{mod} \; 8 \right), \\
      \displaystyle p+1, & p=  \pm 1 \,  \left( \mathrm{mod} \; 8 \right),\\
      \displaystyle p, & p=2. \\
\end{array} 
\right.
\end{equation*}
\end{lemma}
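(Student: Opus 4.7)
The plan is to apply Shimizu's covolume formula: for an order $\mathcal{O}$ in an indefinite quaternion algebra $B/\mathbb{Q}$ of reduced discriminant $D$,
\begin{equation*}
\mathrm{Vol}\!\left(\mathcal{O}^{1}/\{\pm I\} \backslash \mathbb{H}\right) = \frac{\pi}{3}\, \prod_{\ell \mid D}(\ell-1)\; \prod_{\ell}\, \bigl[\mathcal{O}^{1}_{\ell,\max} : \mathcal{O}^{1}_\ell\bigr],
\end{equation*}
so the computation splits into (i) pinning down the ramification of $B=(2,p/\mathbb{Q})$ and (ii) computing the local indices of $\mathcal{J}$ at every finite prime.

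For (i), a Hilbert-symbol calculation gives $(2,p)_\infty = 1$ and $(2,p)_\ell = 1$ for $\ell \nmid 2p$; for odd $p$ one has $(2,p)_p = \left(\tfrac{2}{p}\right)$, and by the reciprocity $\prod_v (2,p)_v = 1$ also $(2,p)_2 = \left(\tfrac{2}{p}\right)$. Hence $B$ is the unique indefinite division algebra of reduced discriminant $2p$ when $p \equiv \pm 3 \pmod 8$, while $B \cong M_2(\mathbb{Q})$ when $p \equiv \pm 1 \pmod 8$. For $p = 2$ the identity $(2,2) \sim (2,-1)$ makes every Hilbert symbol trivial and again $B \cong M_2(\mathbb{Q})$.

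For (ii), I analyze the lattice $\mathcal{J}_\ell = \mathbb{Z}_\ell\langle 1, i, j, ij\rangle$ place by place using $i^2 = 2$, $j^2 = p$, $ij = -ji$. At every odd $\ell \notin \{2,p\}$ the basis is already maximal. At $\ell = p$ odd, $j$ has reduced norm of valuation $1$, so $\mathcal{J}_p$ is maximal when $B$ is ramified at $p$ and is an Eichler order of local level $p$ (local index $p+1$) when $B$ splits at $p$. At $\ell = 2$, $i$ is a uniformizer for the ramified subfield $\mathbb{Q}_2(\sqrt{2}) \subset B_2$; a direct comparison of $\mathcal{J}_2$ with a hereditary maximal overorder of $B_2$ yields local index $6$ when $p$ is odd and local index $12$ when $p = 2$ (where $j^2 = 2$ forces $j$ also to be a uniformizer). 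Collecting,
\begin{equation*}
\prod_{\ell} \bigl[\mathcal{O}^{1}_{\ell,\max} : \mathcal{J}^{1}_\ell\bigr] = \begin{cases} 6, & p \equiv \pm 3 \pmod 8,\\ 6(p+1), & p \equiv \pm 1 \pmod 8,\\ 12, & p = 2,\end{cases}
\end{equation*}
and combining with $\prod_{\ell \mid D}(\ell-1) \in \{p-1,\,1,\,1\}$ yields $\mathrm{Vol} = 2(p-1)\pi,\; 2(p+1)\pi,\; 2p\pi$ respectively; the formula $c_p = \mathrm{Vol}/(2\pi) = p + \left(\tfrac{2}{p}\right)$ then follows on inspection.

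The main obstacle is the $2$-adic index, since $\mathcal{J}_2 \supset \mathbb{Z}_2[\sqrt{2}]$ is not visibly an Eichler order and the comparison to a maximal overorder requires an explicit local lattice argument. A convenient alternative in the split cases, as indicated in Remark \ref{jaclan}, is to exhibit an explicit $T \in \mathrm{GL}_2(\mathbb{R})$ with $T^{-1}\Gamma T = \Gamma_0(4p)$ and invoke the classical $\mathrm{Vol}(\Gamma_0(N)\backslash \mathbb{H}) = \frac{\pi}{3}\cdot N \prod_{q \mid N}(1+q^{-1})$; this gives $2(p+1)\pi$ for odd $p \equiv \pm 1 \pmod 8$ and $2p\pi$ for $p = 2$ directly, bypassing the $2$-adic lattice analysis in those cases.
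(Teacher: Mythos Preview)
Your strategy via Shimizu's covolume formula and localization is sound, and the ramification analysis together with the local computation at the odd prime $p$ are correct. However, the argument has a real gap at the prime $2$: you assert that a direct comparison of $\mathcal{J}_2$ with a maximal overorder yields local norm-one index $6$ (respectively $12$ for $p=2$) but never carry this out, and this is precisely the nontrivial step, since $\mathcal{J}_2$ is not an Eichler order and the lattice index (which is $4$) does not coincide with the index of norm-one units. Your proposed bypass is also not established: Remark~\ref{jaclan} and the paper only show that in the split cases $\Gamma$ is conjugate to \emph{some} congruence subgroup of level $4p$ with index $6(p+1)$, not that it is conjugate to $\Gamma_0(4p)$ itself, so the claimed identity $T^{-1}\Gamma T=\Gamma_0(4p)$ is stronger than what is available.

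The paper takes a more hands-on route that sidesteps the $2$-adic subtlety. In the division-algebra cases $p\equiv\pm 3\pmod 8$ it writes down an explicit maximal order $\mathcal{O}\supset\mathcal{J}$ (separately for $p\equiv 3$ and $p\equiv 5\pmod 8$), invokes the known covolume $(p-1)\pi/3$ for $\mathcal{O}^{1}/\{\pm I\}$, and then \emph{enumerates by hand} the six cosets of $\Gamma$ via parity conditions on $(u,v,s,t)$. In the matrix-algebra cases it exhibits an explicit conjugate $\Gamma''\subset\mathrm{PSL}_2(\mathbb{Z})$ (using that $p$ is a norm from $\mathbb{Z}[\sqrt{2}]$) and computes $[\mathrm{PSL}_2(\mathbb{Z}):\Gamma'']$ by reduction modulo $4p$ (or modulo $4$ when $p=2$). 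Your approach would be cleaner and more uniform once the $2$-adic index is actually computed; the paper's explicit coset enumeration is, in effect, doing exactly that computation globally rather than locally, and is what you would need to supply to close the gap.
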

\begin{proof}
For $p= 5 \pmod{8}$, consider the quaternion algebra with $q=2$ and $r=p$, and the Eichler order $\mathcal{O}_B(2p,1)$, with $\mathbb{Z}$-basis $1,i,(1+j)/2,(i+ij)/2$. Consider further the group $\Gamma(2p,1)$, defined by
$$\Gamma(2p,1):=\Phi\left(\mathcal{O}_B(2p,1)\right)\cap \hbox{SL}_2\left(\mathbb{R}\right)/\left\{\pm I \right\}.$$

By \cite[Prop 2.29]{quaternions}, we have that $\Gamma(2p,1)$ is a cocompact Fuchsian group with volume equal to $(p-1)\pi/3$. We can check manually that $\Gamma$ is a subgroup of $\Gamma(2p,1)$ with index $6$, and, therefore, of volume $\hbox{Vol}\left(\Gamma \backslash\mathbb{H}\right)=2(p-1)\pi$. Indeed, the $6$ corresponding cosets can be identified as follows:  We have two cosets defined by the relations $(u,s,t)=(1,0,0),\, (1,0,1) \! \pmod{2}$, two defined by $(u,v,s)=(0,0,1),\, (0,1,1) \!  \pmod{2}$, one defined by $(u,v,s,t)=(1,v,1,v) \! \pmod{2}$, and, finally, one defined by $(u,v,s,t)=(1,v,1,v+1)\! \pmod{2}$. Hence, in this case, $c_p=p-1$.

For $p = 3 \pmod{8}$, consider again the quaternion algebra with $q=2$ and $r=p$. Consider further the order $\mathcal{O}$ with $\mathbb{Z}$-basis $1,i,(1+i+j)/2,(i+ij)/2$. The discriminant of this order is $2p$, implying that it is maximal. By \cite[Prop 2.29]{quaternions}, the cocompact Fuchsian group $\Gamma'$ defined by $$\Gamma':=\Phi\left(\mathcal{O}\right)\cap \hbox{SL}_2\left(\mathbb{R}\right)/\left\{\pm I \right\}$$ has finite volume, equal to $(p-1)\pi/3$. In a similar manner with the case $p=5 \pmod{8}$, we can show that $\Gamma$ is a subgroup of $\Gamma'$ with index $6$, giving, as before, $c_p=p-1$.

For $p=2$ or $p= \pm 1 \pmod{8}$, the corresponding quaternion algebra is a matrix algebra, and, therefore, the problem is reduced to congruence groups.  Indeed, using the fact that $p$ is a norm of an element of $\mathbb{Z}\left[\sqrt{2}\right]$, i.e., that the equation $p=x^2-2y^2$ has a solution, we can see that $\Gamma$ is conjugate to
\begin{equation*}
\Gamma'' := \left\{ \begin{pmatrix}
a & 2b\\
c & d
\end{pmatrix}  \left\vert \, \begin{array}{l} 2 \lambda\vert(a-d)+(b-c) \sqrt{2}\\  a,b,c,d \in \mathbb{Z}, \; ad-2bc=1 \end{array} \right. \right\}/\left\{ \pm I \right\}. \end{equation*}

For the case $p=2$, 
it follows that 
$\Gamma''$ has $\Gamma(4)$ as a subgroup of index $4$, with the four cosets being determined by the parity of $c/2$ and the parity of $(a+1)/2$. On the other hand, the index of $\Gamma(4)$ in $\pslz$ is $4^3\cdot(1-1/2^2)=48$ (see \cite[p.44]{iwaniec}). Therefore, $\Gamma''$ is a  congruence group of level $4$ and index $48/4=12$. This gives $\hbox{Vol}\left(\Gamma \backslash\mathbb{H}\right)=4 \pi$, and hence, $c_2=2=p$.

For the case $p= \pm 1 \pmod{8}$,
note that $\Gamma''$ is a congruence group of level dividing $4p$. 
Reducing modulo $4p$, we find that the index of  $\Gamma(4p)$ in $\Gamma''$ is $8p(p-1)$. On the other hand, the index of $\Gamma(4p)$ in $\pslz$ is $48p(p^2-1)$ (see \cite[p.44]{iwaniec}). Therefore, the index of $\Gamma''$ in $\pslz$ is $48p(p^2-1)/(8p(p-1))=6(p+1)$. This gives $\hbox{Vol}\left(\Gamma \backslash\mathbb{H}\right)=2 \pi (p+1)$, and hence $c_p=p+1$.
\end{proof}
This concludes the proof of the first part of Theorem \ref{ideals}. The second part now follows from Theorem \ref{newmserr}. 

We note that, due to Selberg's $1/4$ eigenvalue conjecture, we expect the middle sum in equation (\ref{corras}) to be empty. In other words, we have the following proposition.
\begin{proposition}
    Under Selberg's $1/4$ eigenvalue conjecture, for any fixed prime number $p$, we have
    \begin{equation} \sum_{n \leq X}\mathcal{N}(n)\mathcal{N}(pn\pm 1)=\frac{4p}{c_p} \cdot    \left(\frac{\log{\epsilon}}{\pi}\right)^2X+O(X^{2/3}), \end{equation}
    where $c_p$ is as in Theorem \ref{ideals}.
\end{proposition}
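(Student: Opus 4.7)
The plan is to deduce the proposition as an immediate corollary of Theorem \ref{ideals}. The only thing to verify is that, under Selberg's $1/4$ eigenvalue conjecture for the relevant group $\Gamma$, the middle sum
\[ \sum_{1/2<s_j<1} a^{\pm}_j X^{s_j} \]
appearing in equation (\ref{corras}) is empty. Recall that the index $j$ runs over the small eigenvalues $\lambda_j=s_j(1-s_j)$ with $s_j\in(1/2,1)$, i.e., precisely those eigenvalues satisfying $0<\lambda_j<1/4$. Selberg's conjecture states exactly that no such exceptional eigenvalue exists for congruence subgroups, so the sum in question is empty whenever the conjecture applies to $\Gamma$.

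The next step is therefore to argue that Selberg's conjecture is applicable to our $\Gamma$, even though $\Gamma$ is not itself a congruence subgroup in general. This is essentially already done in Remark \ref{jaclan}: for $p=\pm 3\pmod 8$, the explicit Jacquet--Langlands correspondence identifies the cuspidal spectrum of $\Gamma$ (the group of units of a maximal order in the indefinite quaternion algebra) with a subspace of the cuspidal spectrum of $\Gamma_0(8p)$, so the existence of an exceptional eigenvalue for $\Gamma$ would force one for $\Gamma_0(8p)$, contradicting Selberg's conjecture applied to this congruence subgroup. For $p=\pm 1\pmod 8$ and $p=2$, Lemma \ref{cplemma} (and the conjugation argument used there) shows that $\Gamma$ is conjugate to a congruence subgroup of level $4p$, so Selberg's conjecture applies directly.

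Combining these two observations, under Selberg's conjecture every small-eigenvalue term in (\ref{corras}) vanishes, and Theorem \ref{ideals} reduces to
\[ \sum_{n\le X}\mathcal{N}(n)\mathcal{N}(pn\pm 1)=\frac{4p}{c_p}\left(\frac{\log\epsilon}{\pi}\right)^2 X+O(X^{2/3}), \]
which is the stated proposition. I do not foresee any serious obstacle: the proposition is a formal consequence of Theorem \ref{ideals} together with the spectral interpretation of the middle sum and the applicability of Selberg's conjecture (via Jacquet--Langlands in the quaternionic cases and via explicit conjugation to a congruence subgroup in the remaining cases). The only mild subtlety is making sure that no $s_j$ can equal exactly $1/2$ contributing a non-asymptotic term; this is automatic since such $s_j$ do not appear in the range $(1/2,1)$ of the summation.
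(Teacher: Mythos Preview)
Your proposal is correct and follows essentially the same route as the paper: the proposition is stated as an immediate consequence of Theorem~\ref{ideals} once one knows the middle sum is empty, and the applicability of Selberg's conjecture to $\Gamma$ is handled exactly as in Remark~\ref{jaclan} (Jacquet--Langlands for $p\equiv\pm 3\pmod 8$, conjugation to a congruence subgroup for the remaining cases). One minor inaccuracy: $\Gamma$ is not itself the unit group of a \emph{maximal} order---it comes from the order $\mathcal{J}$ with $\mathbb{Z}$-basis $1,i,j,ij$---but this does not affect the argument, since Remark~\ref{jaclan} already records the relevant spectral correspondence with $\Gamma_0(8p)$.
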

For more details, as well as a list of values of $p$ for which this is known unconditionally, see Remark \ref{jaclan}.

\section*{Acknowledgements} The author would like to thank his PhD advisor, Yiannis Petridis, for suggesting the problem and for his advice and patience. The author was supported by University College London and the
Engineering and Physical Sciences Research Council (EPSRC) studentship grant EP/V520263/1.
The author was also supported by the Swedish Research Council under grant no.2016-06596
while in residence at Institut Mittag-Leffler in Djursholm, Sweden during the Analytic Number Theory
program in the spring of 2024. Finally, the author would like to thank the Max Planck Institute for Mathematics in Bonn, for its hospitality and financial support.
 \appendix
\section{Special Functions } \label{spapp}
For $p,q$ non-negative integers with $p>q$, $z$ complex with $|z|<1$ and $a_i,b_i$ real numbers with $b_i$ not being non-positive integers, we define the hypergeometric function
by the power series
\begin{equation} \label{hgseriesdef} {}_pF_q\left( \begin{matrix}
a_1,\dots,a_p
\\b_1, \dots, b_q \end{matrix} 
; z
\right) = \sum_{n} \frac{\prod_i (a_i)_n}{\prod_i (b_i)_n} \cdot \frac{z^n}{n!},\end{equation}
where $(x)_n=x \cdot (x+1) \dots (x+n-1)$ and $(x)_0=1$.
This has an analytic continuation in any region avoiding the branch cut point $z=1$. 

For $p=1,q=0$, we have
$${}_1F_0\left(a; \, ; z \right)=(1-z)^{-a}.$$
For $p=2,q=1$, we have that ${}_2F_1\left(a,b; c \, ; z \right)$ is a solution of the differential equation
$$zF''(z)+\left(c-(a+b+1)z\right)F'(z)-abF(z)=0,$$ with initial condition $F(0)=1, \; F'(0)=ab/c$.

For any non-negative integers $p,q$ with $p>q$ and complex numbers $t,r,u$ with $|u|<1$ and $\hbox{Re}(t),\hbox{Re}(r)>0$, we have (see \cite[Eq.7.512.12, p.814]{toisap}) that
\begin{equation} \label{hintrans}
    \int_{0}^{1}(1-x)^{t-1}x^{r-1}\cdot {}_p F_q \left(
\begin{matrix}
a_1,\dots,a_p
\\b_1, \dots, b_q \end{matrix} 
; ux
\right) \, dx = B(t,r) \cdot {}_{p+1} F_{q+1} \left(
\begin{matrix}r,
a_1,\dots,a_p
\\t+r,b_1, \dots, b_q \end{matrix} 
; u
\right),
\end{equation}
where \begin{equation} \label{betadef} B(t,r):=\frac{\Gamma(t) \Gamma(r)}{\Gamma(t+r)}\end{equation} is the Beta function.
In particular, for $p=1,q=0$ we have
\begin{equation} \label{intrepr}
B(t,r)\cdot{}_{2}F_{1}\left( r,a\, ; \, t+r \, ; \, u \right)=\int_{0}^{1}(1-x)^{t-1}x^{r-1}(1-ux)^{-a} \, dx. \end{equation}
By analytic continuation, these can be extended to $u$ in a fixed region that avoids the branch point $u=1$ (for our purposes, we take this region to be defined by $\hbox{Re}(u) \leq 0$) .
Equations (\ref{hintrans}) and (\ref{intrepr}) can also be verified using the
series definition directly. In the same manner, we can prove the following:
\begin{equation} \label{hder} \frac{d}{dx}\left( {}_p F_q \left(
\begin{matrix}
a_1,\dots,a_p
\\b_1, \dots, b_q \end{matrix} 
; x
\right)\right)=\frac{\prod a_i}{\prod b_i} \cdot {}_p F_q \left(
\begin{matrix}
a_1+1,\dots,a_p+1
\\b_1+1, \dots, b_q+1 \end{matrix} 
; x
\right).\end{equation}

We also note the following contiguous relation, which can be proved using the series definition combined with analytic continuation:

\begin{equation} \label{conti} {}_2F_1\left(a,b ; \, c; \, z \right)={}_2F_1\left(a+1,b ; \, c; \, z \right)-\frac{bz}{c}{}_2F_1\left(a+1,b+1 ; \, c+1; \, z \right). \end{equation}
For any fixed real number $z$ and complex number $r$ bounded away from the negative real axis with $r \rightarrow \infty$ (see \cite[Eq.11, p.237]{luke}):
\begin{equation} \label{hypgoas}
{}_2 F_1 \left(
\begin{matrix}
r,r+c \,
\\2r+b \end{matrix} 
; z
\right)=
    \sqrt{\pi}\frac{ r^{-1/2}\Gamma(2r+b)}{\Gamma(r+c) \Gamma(r+b-c)} \cdot \frac{\left(\sqrt{1-z} \right)^{b-c-1/2}} {(1+\sqrt{1-z})^{2r+1-b}} \cdot \left(1+ O(r^{-1})\right).
\end{equation}
In particular, for $\hbox{Re}(r)$ bounded, we have
\begin{equation} \label{hypgo1}
    {}_{2}F_{1}\left( r,r+c\, ; \, 2r+b \, ; \, z \right) \ll \frac{ r^{-1/2}\Gamma(2r+b)}{\Gamma(r+c) \Gamma(r+b-c)} \ll 1.
\end{equation}
If $a_j-a_i$ is not an integer  and $a_j-b_i$ is not a non-negative integer for any pair of distinct $i,j$, then, by \cite[16.8.8]{dlmf} we have
\begin{equation} \label{hgexp} {}_{q+1} F_q \left(
\begin{matrix}
a_1,\dots,a_{q+1}
\\b_1, \dots, b_q \end{matrix} 
; x
\right)=\sum_{j=1}^{q+1}\gamma_j \cdot w_j(x),
\end{equation}
where
\begin{equation*}
    w_j(x):=(-x)^{-a_j} \cdot {}_{q+1} F_q \left(
\begin{matrix}
a_j, 1-b_1+a_j, \dots, 1-b_q+a_j 
\\1-a_1+a_j, \dots \, * \, \dots, 1-a_{q+1}+a_j \end{matrix} 
; \frac{1}{x}
\right),
\end{equation*}
and
\begin{equation*} \displaystyle
    \gamma_j:=\prod_{k \neq j}\frac{\Gamma(a_k-a_j)  }{ \Gamma(a_k)} \cdot \prod_{k}\frac{\Gamma(b_k)  }{ \Gamma(b_k-a_j)}.
\end{equation*}
The symbol `$*$' indicates that the entry $1-a_j+a_j$ is omitted.
If the necessary conditions are not satisfied and, say, $a_2-a_1$ is an integer, we consider instead the case $\tilde{a}_2=a_2+\epsilon$ and take the limit as $\epsilon \rightarrow 0$.

We now provide three quadratic transformations used in our work. The following pair of transformations (see \cite[15.8.20]{dlmf} and \cite[15.8.19]{dlmf}) is used in establishing estimates: For $\alpha,\beta$ non-zero complex numbers, we have
\begin{align} 
{}_2F_1\left(\alpha,1-\alpha ; \, \beta; \, z \right)&=(1-z)^{\beta-1}{}_2F_1\left(\frac{\beta-\alpha}{2},\frac{\beta+\alpha-1}{2}; \beta; 4z(1-z) \right),\label{quadtrans2}
\\ 
{}_2F_1\left(\alpha,1-\alpha ; \, \beta; \, z \right)&=(1-2z)^{1-\alpha-\beta}(1-z)^{\beta-1}{}_2F_1\left(\frac{\beta+\alpha}{2},\frac{\beta+\alpha-1}{2}; \beta; \frac{4z(1-z)}{(1-2z)^2} \right),\label{quadtrans3}
\end{align} where $\hbox{Re}(z)<1/2$ and $\beta$ is not a non-positive integer. 

Finally, we note the following quadratic transformation (see \cite[Eq.9.136.3, p.1009]{toisap}):
For $\alpha,\beta$ non-zero complex numbers, we have
\begin{eqnarray} 
    {}_2F_1\left(\alpha,\beta; \, \frac{\alpha+\beta+1}{2}; \, \frac{1+\sqrt{z}}{2} \right)- \,{}_2F_1\left(\alpha,\beta; \, \frac{\alpha+\beta+1}{2}; \, \frac{1-\sqrt{z}}{2} \right) \nonumber \\=
\frac{\Gamma((\alpha+\beta+1)/2)}{\Gamma(\alpha/2)\Gamma(\beta/2)} \cdot \sqrt{\pi z} \cdot {}_2F_1\left(\frac{\alpha+1}{2},\frac{\beta+1}{2}; \, \frac{3}{2}; \, z \right), \label{quadtrans}
\end{eqnarray}
where we require $\alpha/2$, $\beta/2$, and $(\alpha+\beta+1)/2$ not to be non-positive integers.



\end{document}